\theoremstyle{plain}
\newtheorem{theorem}{Theorem}[section]
\newtheorem{lemma}[theorem]{Lemma}
\newtheorem{corollary}[theorem]{Corollary}
\newtheorem{proposition}[theorem]{Proposition}
\newtheorem{example}[theorem]{Example}
\theoremstyle{definition}
\newtheorem*{properties*}{Properties}
\newenvironment{definition*}[1][Definition]{\begin{trivlist}
\item[\hskip \labelsep {\bfseries #1}]}{\end{trivlist}}
\numberwithin{equation}{section}
\newcommand{\inv}{\mathrm{inv}}
\newcommand{\maj}{\mathrm{maj}}
\newcommand{\nn}{\mathrm{ne}}
\newcommand{\cc}{\mathrm{cr}}
\newcommand{\al}{\mathrm{al}}
\newcommand{\Rlminl}{\mathrm{Rlminl}}
\newcommand{\Lrmaxp}{\mathrm{Lrmaxp}}
\newcommand{\Short}{\mathrm{Short}}
\newcommand{\Long}{\mathrm{Long}}
\newcommand{\Cyc}{\mathrm{Cyc}}
\newcommand{\cyc}{\mathrm{cyc}}
\newcommand{\sor}{\mathrm{sor}}
\newcommand{\rlmin}{\mathrm{rlmin}}
\newcommand{\col}{\mathrm{col}}
\begin{document}

\title{Cycles and sorting index for matchings and restricted permutations}
\author{
Svetlana~Poznanovi\'{c}$^{1}$
 \vspace{.2cm} \\
School of Mathematics\\
Georgia Institute of Technology } 
\date{}
\maketitle
\begin{abstract} 
We prove that the Mahonian-Stirling pairs of permutation statistics $(\sor, \cyc)$ and $(\inv, \mathrm{rlmin})$ are equidistributed on the set of permutations that correspond to arrangements of $n$ non-atacking rooks on a Ferrers board with $n$ rows and $n$ columns. The proofs are combinatorial and use bijections between matchings and Dyck paths and a new statistic, sorting index for matchings, that we define. We also prove a refinement of this equidistribution result which describes the minimal elements in the permutation cycles  and the right-to-left minimum letters. Moreover, we define a sorting index for bicolored matchings and use it to show analogous equidistribution results for restricted permutations of type $B_n$ and $D_n$.
\end{abstract}



{\renewcommand{\thefootnote}{}
\footnote{\emph{E-mail address}: 
svetlana@math.gatech.edu.} } 
\footnotetext[1]{The author was supported in part by a Burroughs
Wellcome Fund Career Award at the Scientific Interface to C.E.~Heitsch.}

\section{Introduction} \label{S:introduction}

An inversion in a permutation $\sigma$ is a pair $\sigma(i) > \sigma(j)$ such that $i <j$. The number of inversions in $\sigma$ is denoted by $\inv(\sigma)$. The distribution of $\inv$ over the symmetric group $S_n$ was first found by Rodriguez~\cite{Rodriguez} in 1837 and is well known to be \[ \sum_{\sigma \in S_n}  q^{\mathrm{inv}(\sigma)} = (1+q)(1+q+q^2) \cdots (1+ q + \cdots + q^{n-1}).\]   Much later, MacMahon~\cite{Mac} defined the major index $\mathrm{maj}$ and proved that it has the same distribution as $\mathrm{inv}$. In his honor, all permutation statistics that are equally distributed with $\inv$ are called Mahonian. MacMahon's remarkable result initiated a systematic research of permutation statistics and in particular many more Mahonian statistics have been described in the literature since then.

Another classical permutation statistic is the number of cycles, $\cyc$. Its distribution is given by
\[ \sum_{\sigma \in S_n}  t^{\cyc(\sigma)} = t(t+1)(t+2) \cdots (t+n-1)\] and the coefficients of this polynomial are known as the unsigned Stirling numbers of the first kind.

Given these two distributions, it is natural then to ask which ``Mahonian-Stirling'' pairs of statistics $(\mathrm{stat}_1, \mathrm{stat}_2)$ have the distribution 
\begin{equation} \label{Sn} \sum_{\sigma \in S_n}  q^{\mathrm{stat}_1(\sigma)}  t^{\mathrm{stat}_2(\sigma)} = t(t+q)(t+q+q^2) \cdots (t+ q + \cdots + q^{n-1}).\end{equation}
As proved by Bj\"{o}rner and Wachs~\cite{BW}, $(\inv, \rlmin)$ and $(\maj, \rlmin)$ are two such pairs, where $\rlmin$ is the number of right-to-left minimum letters. A right-to-left minimum letter of  a permutation $\sigma$ is a letter $\sigma(i)$ such that $\sigma(i)< \sigma(j)$ for all $j > i$. The set of all right-to-left minimum letters in $\sigma$ will be denoted by $\Rlminl(\sigma)$. In fact, Bj\"{o}rner and Wachs proved the following stronger result
\begin{equation} \label{BW} \sum_{\sigma \in S_n} q^{\inv(\sigma)} \prod_{i \in \Rlminl(\sigma)} t_i =  \sum_{\sigma \in S_n} q^{\maj(\sigma)} \prod_{i \in \Rlminl(\sigma)} t_i = t_1(t_2+q)(t_3+q+q^2)\cdots(t_n +q + \cdots + q^{n-1}).\end{equation} 



A natural Mahonian partner for $\cyc$ was found by Petersen~\cite{Petersen}. For a given permutation $\sigma \in S_n$ there is a unique expression $$ \sigma = (i_1 j_1)(i_2 j_2)\cdots (i_k j_k)$$ as a product of transpositions such that $i_s < j_s$ for $1 \leq s \leq k$ and $j_1 < \cdots < j_k$. The sorting index of $\sigma$ is defined to be
\[ \sor(\sigma) = \sum_{s=1}^k (j_s - i_s).\] The sorting index can also be described as the total distance the elements in $\sigma$ travel when $\sigma$ is sorted using the Straight Selection Sort algorithm~\cite{Knuth} in which, using a transposition, we move the largest number to its proper place, then the second largest to its proper place, etc. For example, the steps for sorting $\sigma = 6571342$ are
\[ 65{\bf{7}}1342 \xrightarrow{(37)} {\bf{6}}521347 \xrightarrow{(1 6)}  4{\bf{5}}21367 \xrightarrow{(2 5)} {\bf{4}}321567 \xrightarrow{(1 4)} 1{\bf{3}}24567 \xrightarrow{(2 3)} 1234567 \]
and therefore $\sigma = (2 \; 3)(1 \; 4)(2 \; 5)(1 \; 6)(3 \; 7)$ and $\sor(\sigma) = (3-2) + (4-1) + (5-2) + (6-1) + (7-3) = 16$.  The relationship to other Mahonian statistics and the Eulerian partner for $\sor$ were studied  by Wilson~\cite{Wilson} who called the sorting index $\mathrm{DIS}$.

Petersen showed that
\[ \sum_{\sigma \in S_n}  q^{\sor(\sigma)} t^{\cyc(\sigma)}= t(t+q)(t+q+q^2) \cdots (t+ q + \cdots + q^{n-1}), \]which implies equidistribution of the pairs $(\inv, \mathrm{rlmin})$ and $(\sor,  \cyc)$. 

In this article we show that the pairs  $(\inv, \rlmin)$ and $(\sor, \cyc)$ have the same distribution on the set of restricted permutations 
\[ S_{\bf{r}} =\{\sigma \in S_n : \sigma(k) \leq r_k, 1 \leq k \leq n  \}  \] for a nondecreasing sequence of  integers $1 \leq r_1 \leq r_2 \leq \cdots \leq r_n \leq n$. These can be described as permutations that correspond to arrangements of $n$ non-atacking rooks on a Ferrers board with rows of length $r_1, \dots, r_n$.  To obtain the results, in Section~\ref{mat} we define a sorting index and cycles for perfect matchings and study the distributions of these statistics over matchings of fixed type. We use bijections between matchings and weighted Dyck paths which enable us to keep track of set-valued statistics and obtain more refined results similar to~\eqref{BW} for restricted permutations.

Analogously to $\sor$, Petersen defined sorting index for signed permutations of type $B_n$ and $D_n$. Using algebraic methods he proved that
\begin{equation} \label{petb} \sum_{\sigma \in B_n} q^{\sor_B(\sigma)} t^{\ell'_B(\sigma)} = \sum_{\sigma \in B_n} q^{\inv_B(\sigma)} t^{\mathrm{nmin}_B(\sigma)} = \prod_{i=1}^n (1+t[2i]_q-t),\end{equation} where $\ell'_B(\sigma)$ is the reflection length of $\sigma$, i.e., the minimal number of transpositions 
in $$\{(i j) : 1 \leq i < j \leq n \} \cup \{(\bar{i} j) : 1 \leq i < j \leq n \}$$ needed to represent $\sigma$; $\inv_B(\sigma)$
 is the number of type $B_n$ inversions, which is known to be equal to the length of $\sigma$ and is given by 
 \begin{equation} \label{invb}  \inv_B (\sigma) = |\{1 \leq i < j \leq n: \sigma(i) > \sigma(j) \}| + |\{1 \leq i < j \leq n: -\sigma(i) > \sigma(j) \} | + N(\sigma), \end{equation} where
\[ N(\sigma) = \text{ number of negative signs in } \sigma. \] Finally,
\begin{equation} \label{nmin} \mathrm{nmin}_B(\sigma) = |\{i : \sigma(i) > |\sigma(j)| \text{ for some } j > i\}| + N(\sigma). \end{equation}
Petersen also defined $\sor_D$, a sorting index for type $D_n$ permutations and showed that it is equidistributed with the number of type $D_n$ inversions:
\begin{equation} \label{petd} \sum_{\sigma \in D_n} q^{\sor_D(\sigma)} = \sum_{\sigma \in D_n} q^{\inv_D(\sigma)}  = [n]_q \cdot \prod_{i=1}^{n-1} [2i]_q.\end{equation}

In Section~\ref{bimat} we define a sorting index and cycles for bicolored matchings and give a combinatorial proof that the pairs $(\sor_B, \ell'_B)$ and $(\inv_B, \mathrm{nmin}_B)$ are equidistributed on the set of restricted signed permutations 
\[ B_{\bf{r}} =\{\sigma \in B_n : |\sigma(k)| \leq r_k, 1 \leq k \leq n  \}  \] for a nondecreasing sequence of  integers $1 \leq r_1 \leq r_2 \leq \cdots \leq r_n \leq n$. Using bijections between bicolored matchings and weighted Dyck paths with bicolored rises, we in fact prove equidistribution of set-valued statistics and their generating functions. Moreover, we find natural Stirling partners for $\sor_D$ and $\inv_D$ and prove equidistribution of the two Mahonian-Stirling pairs on sets of restricted permutations of type $D_n$:
\[ D_{\bf{r}} =\{\sigma \in D_n : |\sigma(k)| \leq r_k, 1 \leq k \leq n  \}.  \]

\section{Statistics on perfect matchings}
\label{mat}

A matching is a partition of a set in blocks of size at most two and if it has no single-element blocks the matching is said to be perfect. The set of all perfect matchings with $n$ blocks is denoted by $\mathcal{M}_n$. All matchings in this work will be perfect and henceforth we will omit this adjective. 

\subsection{Statistics based on crossings and nestings}

A matching in $\mathcal{M}_n$ can be represented by a graph with $2n$ labeled vertices and $n$ edges in which each vertex has a degree $1$. The vertices $1, 2, \dots, 2n$ are drawn on a horizontal line in natural order and two vertices that are in a same block are  connected by a semicircular arc in the upper half-plane. We will use $i \cdot j$ to denote an arc with vertices $i < j$. The vertex $i$ is said to be the opener while  $j$ is said to be the closer of the arc. For a vertex $i$, we will denote by $M(i)$ the other vertex which is in the same block in the matching $M$ as $i$. Two arcs $i \cdot j$ and $k \cdot l$ with $i < k$ can be in three different relative positions. We say that they form a crossing if $i < k < j< l$, they form a nesting if $ i < k < l < j$, and they form an alignment if $i <j < k < l$. The arc with the smaller opener will be called the left arc of the crossing, nesting, or the alignment, respectively, while the arc with the larger opener will be called the right arc. The numbers of crossings, nestings, and alignements in a matching $M$ are denoted by $\cc(M)$, $\nn(M)$, and $\al(M)$, respectively. 

If $o_1 < \cdots < o_n$ and $c_1 < \cdots <c_n$ are the openers and the closers in $M$, respectively, let $$\Long(M)= \{ k: o_k  \cdot M(o_k) \text{ is not a right arc in a nesting}\}$$ and $$\Short(M)= \{ k: M(c_k) \cdot c_k \text{ is not a left arc in a nesting}\}.$$ Similarly, let $$\mathrm{Left}(M)= \{ k: o_k  \cdot  M(o_k) \text{ is not a right arc in a crossing}\}.$$ We will use lower-case letters to denote the cardinalities of the sets. For example, $\mathrm{long}(M)= | \Long(M)|$. 
\begin{example} For the matching $M$ in Figure~\ref{fbijone} we have $\nn(M) = \cc(M) = \al(M) =5$, $\mathrm{Long}(M) = \{1, 2\}$, $\mathrm{Short}(M) = \{1, 2, 3, 5\}$, and $\mathrm{Left}(M) = \{1, 5\}$.
\end{example}
The pair of sets $(\{o_1, \dots, o_n\}, \{c_1, \dots, c_n\})$ of openers and closers of a matching $M$ is called the type of $M$. There is a natural one-to-one correspondence between types of matchings in $\mathcal{M}_n$ and Dyck paths of semilength $n$, i.e.,  lattice paths that start at $(0,0)$, end at $(2n,0)$, use steps $(1,1)$ (rises) and $(1,-1)$ (falls), and never go below the $x$-axis. The set of all such Dyck paths will be denoted by $\mathcal{D}_n$. Namely, the openers in the type correspond to the rises in the Dyck path while the closers correspond to the falls. Therefore, for convenience, we will  say that a matching in $\mathcal{M}_n$ is of type $D$, for some Dyck path $D \in \mathcal{D}_n$, and we will denote the set of all  matchings of type $D$ by $\mathcal{M}_n(D)$.

The height of a rise of a Dyck path is the $y$-coordinate of the right endpoint of the corresponding $(1,1)$ segment. The sequence $(h_1, \dots, h_n)$ of the heights of the rises of a $D \in \mathcal{D}_n$ when read from left to right will be called shortly the height sequence of $D$. For example, the height sequence of the Dyck path in Figure~\ref{fbijone} is $(1, 2, 3, 3, 3, 4)$. A weighted Dyck path is a pair $(D, (w_1, \dots, w_n))$ where $D \in \mathcal{D}_n$ with height sequence $(h_1, \dots, h_n)$ and $w_i \in \mathbb{Z} \text{ with } 1 \leq w_i \leq h_i$.  There is a well-known bijection $\varphi_1$ from  the set $\mathcal{WD}_n$ of weighted Dyck paths of semilength $n$ to $\mathcal{M}_n$~\cite{deSC}.   Namely, the openers $o_1 < o_2 < \cdots < o_n$ of the matching  that corresponds to a given  $(D,(w_1, \dots, w_n)) \in \mathcal{WD}_n$ are determined according to the type $D$.  To construct the corresponding matching $M$, we connect the openers from right to left, starting from $o_n$. After $o_n, o_{n-1}, \dots, o_{k+1}$ are connected to a closer, there are exactly $h_k$ unconnected closers that are larger than $o_k$. We connect $o_k$ to the $w_k$-th of the available closers, when they are listed in decreasing order (see Figure~\ref{fbijone}).

\begin{figure}[h]
\begin{center}
\includegraphics[height=2cm]{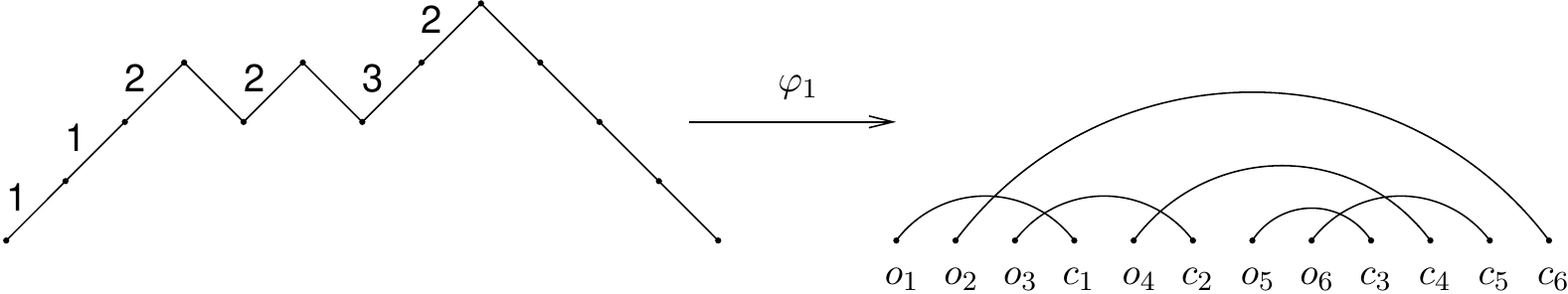}
\end{center}
\caption{The bijection $\varphi_1$ between weighted Dyck paths and matchings.}
\label{fbijone}
\end{figure}

Via the bijection $\varphi_1$ we immediately get the following generating function.
\begin{theorem} 
\label{thmbasic}
If $D \in \mathcal{D}_n$ has a height sequence  $(h_1, \dots, h_n)$, then
\begin{equation} \label{eqone}
\sum_{M \in \mathcal{M}_n(D)} p^{\cc(M)} q^{\nn(M)} \prod_{i \in \mathrm{Left}(M)} s_i \prod_{i \in \mathrm{Long}(M)} t_i = \prod_{k=1}^n (t_k  p^{h_k-1}   +  p^{h_k -2}q   + \cdots + p q^{h_k-2} + s_k q^{h_k-1}). 
\end{equation}
\end{theorem}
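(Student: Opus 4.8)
The plan is to carry the whole identity through the bijection $\varphi_1 \colon \mathcal{WD}_n \to \mathcal{M}_n$. For a fixed type $D$ with height sequence $(h_1,\dots,h_n)$, the map $\varphi_1$ restricts to a bijection from the set of weight vectors $(w_1,\dots,w_n)$ with $1 \le w_k \le h_k$ onto $\mathcal{M}_n(D)$, so it suffices to prove that if $M=\varphi_1(D,(w_1,\dots,w_n))$ then
\[ \cc(M)=\sum_{k=1}^{n}(h_k-w_k), \qquad \nn(M)=\sum_{k=1}^{n}(w_k-1), \]
and moreover that $k\in\mathrm{Long}(M)$ exactly when $w_k=1$ and $k\in\mathrm{Left}(M)$ exactly when $w_k=h_k$. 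Granting this, the left-hand side of \eqref{eqone} equals $\sum_{(w_1,\dots,w_n)}\prod_{k=1}^{n}p^{h_k-w_k}q^{w_k-1}u_k(w_k)$, where $u_k(w)$ is the product of a factor $t_k$ included precisely when $w=1$ and a factor $s_k$ included precisely when $w=h_k$ (so $u_k(w)=s_kt_k$ in the degenerate case $h_k=1$). Since the coordinates $w_k$ vary independently, this sum factors as $\prod_{k=1}^{n}\bigl(\sum_{w=1}^{h_k}p^{h_k-w}q^{w-1}u_k(w)\bigr)$, and the $k$-th factor is precisely $t_kp^{h_k-1}+p^{h_k-2}q+\cdots+pq^{h_k-2}+s_kq^{h_k-1}$.

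It then remains to read $\cc$, $\nn$, $\mathrm{Long}$ and $\mathrm{Left}$ off the step-by-step description of $\varphi_1$. The openers $o_1<\cdots<o_n$ are fixed by $D$ and are connected from right to left; when $o_k$ is to be connected there are exactly $h_k$ free closers greater than $o_k$, say $d_1>d_2>\cdots>d_{h_k}$, and we set $M(o_k)=d_{w_k}$. The crucial observation is that $o_k$ is processed before all of $o_{k-1},\dots,o_1$, so every closer that is still free at step $k$ is matched, later on, to an opener smaller than $o_k$. Consequently, for each $m\ne w_k$ the closer $d_m$ is matched to some $o_i$ with $i<k$, and the arcs $o_i\cdot M(o_i)$ and $o_k\cdot M(o_k)$ form a nesting with $o_k\cdot M(o_k)$ as the inner (right) arc if $m<w_k$ (then $M(o_i)=d_m>d_{w_k}=M(o_k)$) and a crossing with $o_k\cdot M(o_k)$ as the right arc if $m>w_k$ (then $o_k<d_m<M(o_k)$). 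Conversely, the left arc of any nesting or crossing in which $o_k\cdot M(o_k)$ is the right arc is some $o_i\cdot M(o_i)$ with $o_i<o_k$ and $M(o_i)>o_k$; such an $o_i$ is processed after step $k$, so $M(o_i)$ was free then, i.e.\ $M(o_i)=d_m$ with $m\ne w_k$, and $m<w_k$ for a nesting, $m>w_k$ for a crossing. Hence $o_k\cdot M(o_k)$ is the inner arc of exactly $w_k-1$ nestings and the right arc of exactly $h_k-w_k$ crossings. Summing over $k$ yields the two displayed formulas; and $k\in\mathrm{Long}(M)$ --- meaning nothing nests over $o_k\cdot M(o_k)$ --- is equivalent to $w_k-1=0$, while $k\in\mathrm{Left}(M)$ --- meaning nothing crosses $o_k\cdot M(o_k)$ from the left --- is equivalent to $h_k-w_k=0$.

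The one spot that needs care is the ``conversely'' half of this dictionary: we must be sure that every closer free at step $k$ is indeed matched later to a smaller opener (so that it genuinely creates a configuration of the claimed orientation), and dually that the left arc of such a configuration was genuinely free at step $k$. Both statements reduce to the single structural fact that $\varphi_1$ connects the openers in decreasing order of index, so that at any step ``still free'' and ``matched later, hence to a smaller opener'' are one and the same condition. Once this is pinned down, the factorization of the generating function, and in particular the appearance of $t_k$ at the $w_k=1$ end and of $s_k$ at the $w_k=h_k$ end of each factor, is immediate.
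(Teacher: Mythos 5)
Your proof is correct and takes essentially the same route as the paper: both push the identity through the bijection $\varphi_1$ and rest on the fact that the arc $o_k \cdot M(o_k)$ is the right arc of exactly $w_k-1$ nestings and $h_k-w_k$ crossings, so that the generating function factors over the independent coordinates $w_k$. The paper states this key fact without justification, whereas you verify it carefully (including the degenerate $h_k=1$ factor), so your write-up is simply a more detailed version of the same argument.
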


\begin{proof} The edge $o_k \cdot M(o_k)$ will be a right arc in  exactly $w_{k}-1$ nestings and exactly $h_k-w_k$ crossings in $M = \varphi_1(D, (w_1, \dots, w_n))$.  So, $k \in \mathrm{Long}(M)$ if and only if $w_k=1$ while the closer that is connected to $o_k$ is in $\mathrm{Left}(M)$ if and only if $w_k = h_k$. 
\end{proof}

The map $\varphi_1$ also has the following property. The definition of $\Rlminl$ was given for permutations but it extends to words in a straightforward way. 

\begin{proposition} \label{propone} Let $(D, (w_1,\dots, w_n)) \in \mathcal{WD}_n$  and $M= \varphi_1(D, (w_1,\dots, w_n))$. Then
\begin{equation}\label{prop1} \Short(M) = \Rlminl(2-w_1, 3-w_2, \dots, n+1-w_n). \end{equation}
\end{proposition}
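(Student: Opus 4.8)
The plan is to carry the statement through the bijection $\varphi_1$ and then reduce it to a self-contained lemma about permutations. Write $v_i = i+1-w_i$, so the right side of \eqref{prop1} is $\Rlminl(v_1,\dots,v_n)$; put $M=\varphi_1(D,(w_1,\dots,w_n))$ with openers $o_1<\cdots<o_n$ and closers $c_1<\cdots<c_n$, and define the permutation $\kappa\in S_n$ by $M(o_i)=c_{\kappa(i)}$, with $\sigma:=\kappa^{-1}$ so that $M(c_k)=o_{\sigma(k)}$. I would first reformulate $\Short$: the arc $M(c_k)\cdot c_k$ is the left arc of a nesting exactly when some arc is nested strictly inside it, and such an arc must end at a closer $c_{k'}$ with $k'<k$ and start at an opener larger than $M(c_k)$. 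Hence $k\in\Short(M)$ iff $M(c_{k'})<M(c_k)$ for all $k'<k$, i.e.\ iff $\sigma(k')<\sigma(k)$ for all $k'<k$, i.e.\ iff $k$ is a left-to-right maximum position of the word $\sigma$. Rewriting this condition as ``every value $\ge\sigma(k)$ appears at a position $\ge k$ in $\sigma$'' turns it into ``$\kappa(\sigma(k))=k$ is a right-to-left minimum letter of the word $\kappa=\sigma^{-1}$'', so $\Short(M)=\Rlminl(\kappa)$.

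The second step, where the details of $\varphi_1$ enter, is to show that $(v_1,\dots,v_n)$ is precisely the rank sequence of the word $\kappa$, i.e.\ $v_i=|\{i'\le i:\kappa(i')\le\kappa(i)\}|$. In the right-to-left construction, when $o_i$ is being connected exactly $o_{i+1},\dots,o_n$ have been connected already, the free closers larger than $o_i$ number $h_i$, and $c_{\kappa(i)}=M(o_i)$ is the $w_i$-th largest of them; thus $w_i$ equals the number of free closers with index $\ge\kappa(i)$, which is the total $n-\kappa(i)+1$ minus the closers $c_{\kappa(i')}$ already used by some $o_{i'}$ with $i'>i$ --- and those are exactly the $i'>i$ with $\kappa(i')>\kappa(i)$. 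So $w_i=n-\kappa(i)+1-|\{i'>i:\kappa(i')>\kappa(i)\}|$, and substituting into $v_i=i+1-w_i$ and simplifying (using $\kappa(i)=|\{i':\kappa(i')\le\kappa(i)\}|$) yields $v_i=|\{i'\le i:\kappa(i')\le\kappa(i)\}|$.

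It then remains to prove the lemma: for $q\in S_n$, if $v_i$ denotes the rank of $q_i$ among $q_1,\dots,q_i$, then $\Rlminl(v)=\Rlminl(q)$. First observe $v_i\le q_i$, with equality iff $\{1,\dots,q_i\}\subseteq\{q_1,\dots,q_i\}$, iff $q_{i'}>q_i$ for all $i'>i$, iff $i$ is a right-to-left minimum position of $q$; hence $\Rlminl(q)=\{q_i:v_i=q_i\}$. For $\Rlminl(q)\subseteq\Rlminl(v)$: if $v_i=q_i$ then, for any $i'>i$, all of $1,\dots,q_i$ occur among $q_1,\dots,q_{i'}$ and are $<q_{i'}$, so $v_{i'}\ge q_i+1>v_i$, making $v_i$ a right-to-left minimum letter of $v$. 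For the reverse inclusion: given $b\in\Rlminl(v)$, take $i$ to be the rightmost index with $v_i=b$ (so $v_{i'}>b$ for $i'>i$); if $i$ were not a right-to-left minimum position of $q$, then choosing the least $i'>i$ with $q_{i'}<q_i$ gives $q_p>q_{i'}$ for $i<p<i'$, whence $v_{i'}=|\{p\le i:q_p\le q_{i'}\}|+1\le(v_i-1)+1=b$, contradicting $v_{i'}>b$; therefore $v_i=q_i=b\in\Rlminl(q)$. Applying this with $q=\kappa$ and combining with the first two steps gives $\Short(M)=\Rlminl(\kappa)=\Rlminl(v)=\Rlminl(2-w_1,\dots,n+1-w_n)$.

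I expect the main obstacle to be the second step: correctly tracking, during the right-to-left construction of $\varphi_1$, which closers have already been used when a given opener is processed, so that the closed form $v_i=|\{i'\le i:\kappa(i')\le\kappa(i)\}|$ genuinely emerges. The lemma of the third step is the other substantive ingredient but is elementary once one adopts the rank-sequence viewpoint. One subtlety worth settling --- although it turns out not to matter --- is the meaning of $\Rlminl$ on words with repeated letters: since $\Rlminl$ is a set of letters, the strict and weak conventions for ``right-to-left minimum'' yield the same set.
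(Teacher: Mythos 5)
Your proof is correct, but it takes a genuinely different route from the paper's. The paper argues by induction on $n$: it deletes the arc of the largest opener $o_n$, observes that the remaining matching corresponds to the truncated weight sequence $(w_1,\dots,w_{n-1})$, and then does a case analysis on how membership in $\Short$ and in $\Rlminl(2-w_1,\dots,n+1-w_n)$ is affected by reinserting the arc $o_n\cdot c_{n+1-w_n}$. You instead make the statement structural: you encode $M$ by the permutation $\kappa$ with $M(o_i)=c_{\kappa(i)}$, show that $\Short(M)=\Rlminl(\kappa)$ (via the standard duality between left-to-right maximum positions of $\kappa^{-1}$ and right-to-left minimum letters of $\kappa$), show that $(2-w_1,\dots,n+1-w_n)$ is exactly the rank sequence of $\kappa$, and then prove the self-contained lemma that a permutation and its rank sequence have the same set of right-to-left minimum letters. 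I checked the three steps: the nesting characterization of $\Short$ is right, the count $w_i=(n-\kappa(i)+1)-|\{i'>i:\kappa(i')>\kappa(i)\}|$ does follow from the right-to-left construction of $\varphi_1$ and simplifies to the claimed rank formula, and the two inclusions in the rank-sequence lemma are each argued correctly (including the observation that $v_i\le\kappa(i)$ with equality exactly at right-to-left minimum positions). What your approach buys is an explicit combinatorial meaning for the word $(2-w_1,\dots,n+1-w_n)$ --- it is the rank sequence of the matching permutation --- and a reusable lemma about rank sequences; what the paper's induction buys is brevity and the fact that it stays entirely in the language of matchings, which matches the style of the surrounding arguments. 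Your remark that the strict and weak conventions for right-to-left minima of a word with repeated letters yield the same \emph{set} of letters is a point worth keeping, since the rank sequence can indeed have repeats even though $\kappa$ does not.
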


\begin{proof} The proof is by induction on $n$, the number of arcs in the matching. If $n=1$, the only matching with one arc is $M= \{o_1 \cdot c_1\}$, and $\Short(M) = \{1\}$. The corresponding weighted Dyck path has only one rise with weight $w_1=1$. So, $\Rlminl(2-w_1) = \Rlminl(1)=\{1\}$. 

Suppose~\eqref{prop1} holds for all matchings with $n-1$ arcs. If $M$ is a matching with $n$ arcs, openers $o_1 < \cdots < o_n$ and closers $c_1 < \cdots < c_n$, let $M'$ be the matching obtained from $M$ by deleting the arc $o_n \cdot M(o_n)$. The weight sequence associated to $M'$ via the map $\varphi_1^{-1}$ is $(w_1, \dots, w_{n-1})$, since $w_k -1$ is the number of nestings in which the arc of the $k$-th opener is a right arc, and this number is the same in both $M$ and $M'$. Not also that  $M(o_n) = c_{n+1-w_n}$.

Let the closers in $M'$ be $c'_1 < \cdots < c'_{n-1}$. Then for $ i < n$ and $1 \leq k <  n+1-w_n$, $o_i \cdot c_k$ is an arc in $M$ if and only if $o_i \cdot c'_k$ is an arc in $M'$. On the other hand, for $  n+1-w_n < k \leq n$,  $o_i \cdot c_k$ is an arc in $M$ if and only if $o_i \cdot c'_{k-1}$ is an arc in $M'$.

For a number $k \in [n]$ there are two possibilities:
\begin{enumerate}
\item $k \in \Rlminl(2-w_1, 3-w_2, \dots, n+1-w_n)$

If $k = n+1-w_n$ then $k \in \Short(M)$ because $o_n \cdot c_{n+1-w_n}$ is an arc in $M$ and there are no arcs nested below it.

If $k \neq n+1-w_n$ then necessarily $k < n + 1-w_n$. Also $k \in \Rlminl(2-w_1, 3-w_2, \dots, n-w_{n-1}) = \Short(M')$, which implies that the arc $M'(c'_k) \cdot c'_k$ in  $M'$  has no arcs nested below it. But then $M(c_k) \cdot c_k$ is an arc in $M$ and  the  additional arc $o_n \cdot c_{n+1-w_n}$ in $M$ is not nested below it. So, $k \in \Short(M)$. 

\item $k \notin \Rlminl(2-w_1, 3-w_2, \dots, n+1-w_n)$

Necessarily, $k \neq n+1-w_n$.

If $k < n+1-w_n$, then $k \notin \Rlminl(2-w_1, 3-w_2, \dots, n-w_{n-1})$ and, by the induction hypothesis, there is an arc $o_r \cdot c'_s$ nested below $M'(c'_k) \cdot c'_k$ in $M'$. But then the arc $o_r \cdot c_s$ is  nested below $M(c_k) \cdot c_k$ in $M$, and consequently, $k \notin \Short(M)$. 

If  $k > n+1-w_n$, then since $o_n$ is the largest opener in $M$ and $c_k > c_{n+1-w_n}$, the arc $o_n \cdot c_{n+1-w_n}$ is nested below $M(c_k) \cdot c_k$, and so $k \notin \Short(M)$.
\end{enumerate}
\end{proof}


\subsection{Cycles and sorting index for matchings}

Let $M_0$ be a matching in $\mathcal{M}_n(D)$.  For $M \in \mathcal{M}_n(D)$ define $\cyc(M, M_0)$  as the number of cycles in the graph $G=(M, M_0)$ on $2n$ vertices in which the arcs from $M$ are drawn in the upper half-plane as usual and the arcs of $M_0$ are drawn in the lower half-plane, reflected about the number axis.  If the openers of $M$ are $o_1 < \cdots < o_n$, we define
$$\Cyc(M, M_0) = \{k : o_k \text{ is a minimal vertex in a cycle in the graph } (M, M_0)\}.$$ Figure~\ref{Cycles} shows the calculation of $\cyc$ and $\Cyc$ for all matchings of type \includegraphics[height=0.2cm]{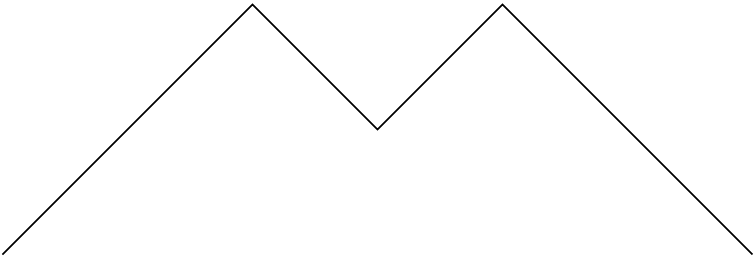} with respect to the nonnesting matching of that type.

\begin{figure}[h]
\begin{center}
\includegraphics[height=2cm]{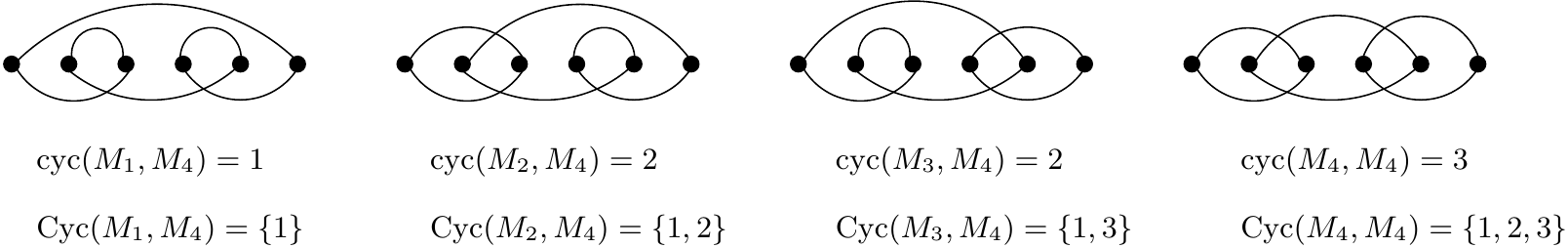}
\end{center}
\caption{Counting cycles in matchings.}
\label{Cycles}
\end{figure}

For  $M, M_0 \in \mathcal{M}_n(D)$, we define the sorting index of $M$ with respect to $M_0$, denoted by $\sor(M, M_0)$, in the following way. Let $o_1 < o_2 < \dots < o_n$ be the openers in $M$ and $M_0$. We construct a sequence of matchings $M_n, M_{n-1}, \dots,  M_2, M_1$ as follows. First, set $M_n =M$. Then, if $M_k(o_k) = M_0(o_k)$, set $M_{k-1} = M_k$. Otherwise, set $M_{k-1}$ to be the matching obtained by replacing the edges $o_k \cdot M_k(o_k)$ and $ M_k(M_0(o_k)) \cdot  M_0(o_k)$ in the matching $M_k$ by the edges 
$o_k \cdot M_0(o_k)$ and $ M_k(M_0(o_k)) \cdot  M_k(o_k)$.  It follows from the definition that $M_1 =M_0$. In other words,  we gradually sort the matching $M$ by reconnecting the  openers to the closers  as ``prescribed'' by $M_0$. Note that when swapping of edges  takes place, it is always true that $M_k(M_0(o_k)) < o_k$ and therefore all the intermediary matchings  we get in the process are of  type $D$. Define 
\begin{equation*}
\sor_k(M, M_0) = \begin{cases}
|\{ c :  c > o_k, c \in [M_k(o_k), M_0(o_k)]  \text{ and } M_0(c) < o_k\}| , &\text { if }  M_k(o_k) \leq  M_0(o_k)\\
 |\{ c :  c > o_k, c \notin \left(M_0(o_k), M_k(o_k)\right)  \text{ and } M_0(c) < o_k\}|, &\text{ if }  M_0(o_k) < M_k(o_k) \end{cases}
 \end{equation*}
and
\begin{equation*} \sor(M, M_0) = \sum_{k=1}^n \sor_k(M, M_0). \end{equation*}

\begin{example} Figure~\ref{fsorting} shows the intermediate matchings that are obtained when  $M = M_6$  is sorted to  $M_0 = M_1$. So,

\begin{center}
\begin{tabular} {lll} $\sor_6(M, M_0) = |\{c_3, c_5, c_6\}|  = 3,$ &  $\sor_5(M, M_0) = |\{c_3, c_5\}| = 2,$ & $\sor_4(M, M_0) = |\{c_2, c_5\}| = 2,$ \\
$\sor_3(M, M_0) = | \emptyset | = 0,$ & $\sor_2(M, M_0) = |\{ c_5\}| = 1,$ & $\sor_1(M, M_0) = | \emptyset|  = 0,$
\end{tabular}
\end{center}
and $\sor(M, M_0) = 0 + 1+ 0 + 2 + 2 + 3 = 8$.
\end{example}

\begin{figure}[h]
\begin{center}
\includegraphics[height=6cm]{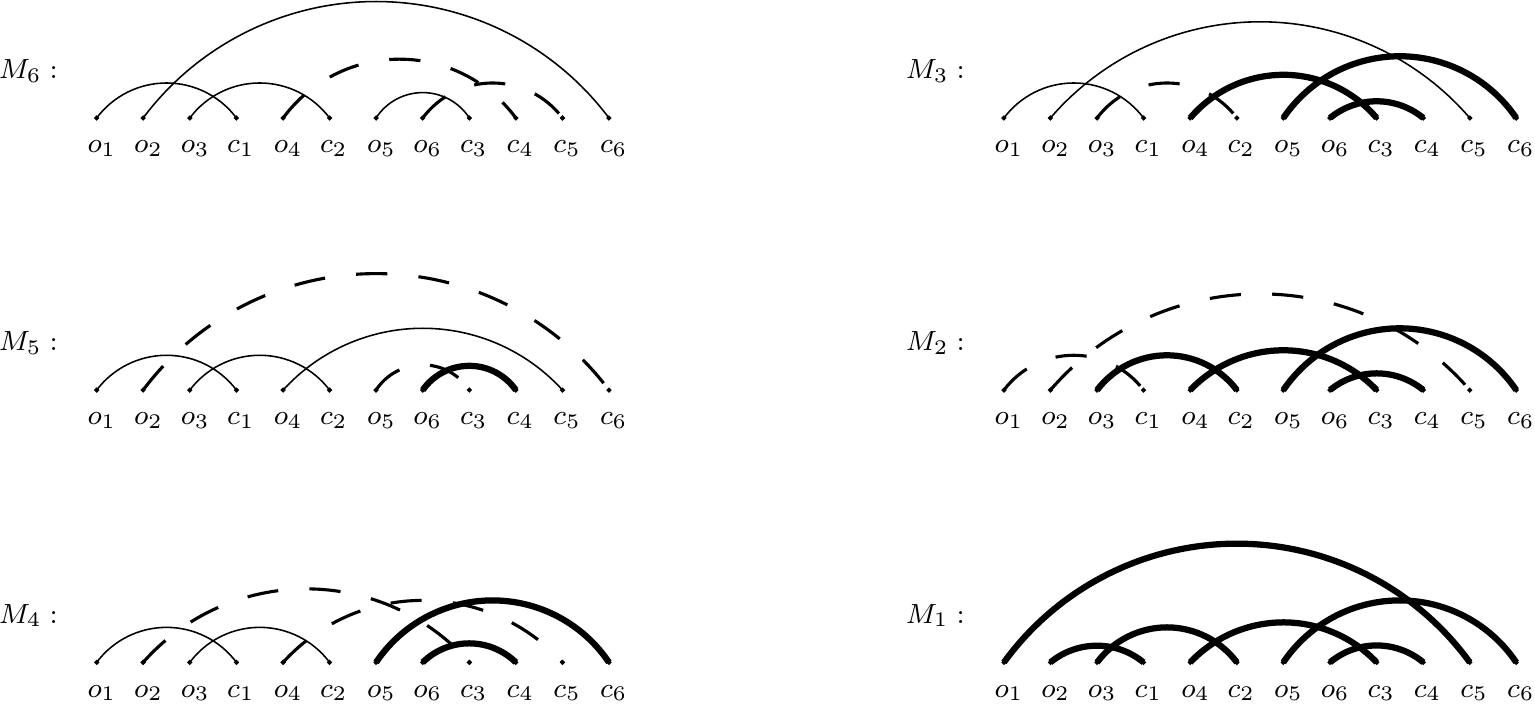}
\end{center}
\caption{Sorting of the matching $M = M_6$ to the matching $M_0 = M_1$. The dashed lines indicate arcs that are about to be swapped while the bold lines represent arcs that have been placed in correct position.}
\label{fsorting}
\end{figure}

\begin{theorem} \label{thmcycles} Let $D$ be a Dyck path with height sequence  $(h_1, \dots, h_n)$. For each $M_0 \in \mathcal{M}_n(D)$, there is a bijection \[\phi_1:  \{(w_1, w_2, \dots, w_n): 1 \leq w_i \leq h_i\} \rightarrow \mathcal{M}_n(D)\] which depends on $M_0$ such that
\begin{itemize}
\item[(a)] $ \sor(\phi_1(w_1, \dots, w_n), M_0) = \sum_{i=1}^n (w_i-1)$,
\item[(b)] $\Cyc(\phi_1(w_1, \dots, w_n), M_0) = \{k: w_k =1\}$.
\end{itemize}
Additionally, if $M_0$ is the unique nonnesting matching of type $D$, then
\begin{itemize}
\item[(c)] $\Short(\phi_1(w_1, \dots, w_n)) = \Rlminl(2-w_1, 3-w_2, \dots, n+1-w_n)$.
\end{itemize}
 \end{theorem}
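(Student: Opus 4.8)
The plan is to build $\phi_1$ by running the sorting procedure of Section~\ref{mat} in reverse and to read (a)--(c) off the construction. Fix $M_0\in\mathcal{M}_n(D)$ with openers $o_1<\cdots<o_n$, and note that $h_k$, the height of the $k$-th rise of $D$, equals the number of closers $c>o_k$ that are connected in $M_0$ to one of $o_1,\dots,o_k$; one of these $h_k$ closers is $M_0(o_k)$ itself, and the remaining $h_k-1$ --- exactly the closers $c>o_k$ with $M_0(c)<o_k$ --- I list in increasing order as $v_1<\cdots<v_{h_k-1}$ and set $p=\#\{i:v_i<M_0(o_k)\}$, so that $v_1<\cdots<v_p<M_0(o_k)<v_{p+1}<\cdots<v_{h_k-1}$. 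Given $(w_1,\dots,w_n)$ with $1\le w_k\le h_k$, define $M=\phi_1(w_1,\dots,w_n)$ iteratively: start with $M:=M_0$, and for $k=1,2,\dots,n$ leave $M$ unchanged when $w_k=1$, and when $w_k\ge 2$ put $m:=w_k-1$, let $c^{\ast}:=v_{p-m+1}$ if $m\le p$ and $c^{\ast}:=v_{p+h_k-m}$ if $m>p$, and replace the two edges $o_k\cdot M_0(o_k)$ and $M(c^{\ast})\cdot c^{\ast}$ of the current $M$ by $o_k\cdot c^{\ast}$ and $M(c^{\ast})\cdot M_0(o_k)$.

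First I would check, by induction on $k$, that at the start of step $k$ the openers $o_k,\dots,o_n$ are attached to their $M_0$-closers while $o_1,\dots,o_{k-1}$ are attached in some order to $\{M_0(o_1),\dots,M_0(o_{k-1})\}$; in particular $c^{\ast}$ is attached to some $o_j$ with $o_j<o_k$, so the modified matching is again of type $D$ (since $o_j<o_k<M_0(o_k),c^{\ast}$) and the invariant persists. To see $\phi_1$ is a bijection I would exhibit the inverse $M\mapsto(\sor_1(M,M_0)+1,\dots,\sor_n(M,M_0)+1)$. The crux is that if $M_k$ denotes the matching reached just before $o_k$ is corrected while sorting $M$ to $M_0$, then $M_k(o_k)$ must be one of $v_1,\dots,v_{h_k-1},M_0(o_k)$, and the map $M_k(o_k)\mapsto\sor_k(M,M_0)$ is exactly the inverse of the selection rule above: it sends $M_0(o_k)\mapsto 0$, $v_{p-m+1}\mapsto m$ for $1\le m\le p$, and $v_{p+h_k-m}\mapsto m$ for $p<m\le h_k-1$. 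This is a direct verification against the two-case definition of $\sor_k$, the second case being designed precisely so that the $h_k-1-p$ closers $v_{p+1},\dots,v_{h_k-1}$ lying to the right of $M_0(o_k)$ are assigned the values $p+1,\dots,h_k-1$. Granting this, the two maps are mutually inverse, $\phi_1$ is a bijection, and since the construction forces $\sor_k(\phi_1(w_1,\dots,w_n),M_0)=w_k-1$, part (a) follows by summing over $k$.

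For (b) I would pass to the graph $(M,M_0)$ and note that correcting $o_m$ (when $w_m\ge 2$) reroutes the $M$-edge at $o_m$ to $M_0(o_m)$; since $o_m$ and $M_0(o_m)$ are joined by an $M_0$-edge they are neighbours on a common cycle, so the move splits off the $2$-cycle $\{o_m,M_0(o_m)\}$ and splices the remainder together --- no two cycles are ever merged and no vertex ever enters a cycle it was not already on. Hence, after the corrections for $o_n,o_{n-1},\dots,o_{k+1}$ have been applied, the cycle of $(M_k,M_0)$ through $o_k$ is the cycle of $(M,M_0)$ through $o_k$ with all pairs $\{o_m,M_0(o_m)\}$, $m>k$, deleted; since every vertex of that cycle is an opener $o_m$ or its $M_0$-partner, the cycle has shrunk to $\{o_k,M_0(o_k)\}$ --- equivalently $M_k(o_k)=M_0(o_k)$, i.e. $w_k=1$ --- exactly when $o_k$ is the smallest opener, hence the smallest vertex, on its cycle in $(M,M_0)$, i.e. exactly when $k\in\Cyc(M,M_0)$. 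This gives (b). Finally, for (c), when $M_0$ is the nonnesting matching $\{o_1\cdot c_1,\dots,o_n\cdot c_n\}$ one has $M_0(c_i)=o_i$, so the closers $c>o_k$ with $M_0(c)<o_k$ are exactly $\{c_i:i<k,\ c_i>o_k\}$ and all of them lie below $M_0(o_k)=c_k$; thus $p=h_k-1$, only the first branch of the construction occurs, and $o_k$ is connected to the $w_k$-th largest available closer, so $\phi_1$ coincides with the bijection $\varphi_1(D,\cdot)$ (equivalently: $\varphi_1(D,(w_1,\dots,w_n))$ has $\sor$-vector $(w_1-1,\dots,w_n-1)$, which characterizes it among matchings of type $D$), whence (c) is precisely Proposition~\ref{propone}. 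I expect the main obstacle to be the verification at the heart of the bijectivity argument --- reconciling the two-case definition of $\sor_k$ with the description of which closers remain available at stage $k$ --- with the cycle-splitting lemma for (b) the other, technically lighter, delicate point.
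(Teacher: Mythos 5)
Your construction of $\phi_1$ --- starting from $M_0$ and ``unsorting'' the openers from left to right, with the selection rule calibrated so that the sorting algorithm undoes each step --- is a legitimate alternative to the paper's construction (which builds $M$ in the upper half-plane from right to left and resolves collisions by following alternating paths). Your verification that the two-case definition of $\sor_k$ assigns the value $m$ to the $(m+1)$-st candidate in the cyclic-to-the-left order is correct, your step-by-step inverse argument gives bijectivity and part (a), and your cycle-splitting argument for part (b) is sound: each correction either does nothing or splits off the $2$-cycle $\{o_m,M_0(o_m)\}$, so $M_k(o_k)=M_0(o_k)$ exactly when $o_k$ is the minimal opener of its cycle in $(M,M_0)$. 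This is essentially the paper's argument for (b) rephrased.

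Part (c), however, has a genuine error: the claim that $\phi_1$ coincides with $\varphi_1(D,\cdot)$ when $M_0$ is nonnesting is false, and so is the parenthetical claim that $\varphi_1(D,(w_1,\dots,w_n))$ has $\sor$-vector $(w_1-1,\dots,w_n-1)$. Take $D=uuuddd$, so the openers are $1,2,3$, the closers are $4,5,6$, $(h_1,h_2,h_3)=(1,2,3)$, and $M_0=\{1\cdot4,\,2\cdot5,\,3\cdot6\}$. Then $\varphi_1(D,(1,1,3))=\{1\cdot5,\,2\cdot6,\,3\cdot4\}$, and sorting this matching to $M_0$ gives $M_2=\{1\cdot5,\,2\cdot4,\,3\cdot6\}$, whence $\sor_2=|\{c>2:\ c\in[4,5],\ M_0(c)<2\}|=|\{4\}|=1\neq w_2-1=0$; by contrast $\phi_1(1,1,3)=\{1\cdot6,\,2\cdot5,\,3\cdot4\}$. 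The identification fails for two reasons: your ``candidates'' for $o_k$ (closers $c>o_k$ with $M_0(c)\le o_k$) are not the ``available'' (still unconnected) closers of $\varphi_1$, and in your left-to-right construction the partner assigned to $o_k$ at step $k$ can be overwritten at a later step, so the final $M(o_k)$ need not be the $w_k$-th candidate. Consequently (c) does not follow from Proposition~\ref{propone}; it needs a separate argument. The paper supplies one by analyzing its own right-to-left construction of $\phi_1$ directly (using that the first choice for $o_k$ is $c_{k+1-w_k}$, that $M(o_k)\ge c_{k+1-w_k}$, and that a closer reached by path-following is never in $\Short(M)$), and you would need to establish analogous structural facts for your construction --- or prove directly that $\Short(\phi_1(w_1,\dots,w_n))=\Rlminl(2-w_1,\dots,n+1-w_n)$ --- to close this gap.
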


\begin{proof} Fix $M_0 \in \mathcal{M}_n(D)$. We construct the bijection $\phi_1$ in the following way. Draw the matching $M_0$ with arcs in the lower half-plane. Suppose $o_1< \dots < o_n$ are the openers of $M_0$. To construct $M=\phi_1(w_1, \dots, w_n)$, we draw arcs in the upper half plane by connecting the openers from right to left to closers as follows. 

Suppose that the openers $o_n, o_{n-1}, \dots, o_{k+1}$ are already connected to a closer and denote the partial matching in the upper half-plane by $N_k$. To connect $o_k$, we consider all  the closers $c$ with the property $c > o_k$ and $M_0(c) \leq o_k$. There are exactly $h_k$ such closers, call them candidates for $o_k$. 

Let $c_{k_0}$ be the closer which is $w_k$-th on the list when all those $h_k$ candidates are listed starting from $M_0(o_k)$ and then going cyclically to left. If $c_{k_0}$ is not connected to an opener by an arc in the upper half-plane, draw the arc $o_k \cdot c_{k_0}$. Otherwise, there is a maximal path in the graph of the type: $c_{k_0}, N_k(c_{k_0}), M_0(N_k(c_{k_0})), N_k(M_0(N_k(c_{k_0}))), \dots, c^*$ which starts with $c_{k_0}$, follows arcs in $N_k$ and $M_0$ alternately and ends with a closer $c^*$ which has not been connected to an opener yet (see Figure~\ref{bijtwo}). Due to the order in which we have been drawing the arcs in the upper half-plane, all vertices in the aforementioned path are to the right of $o_k$. In particular, $c^*$ is to the right of $o_k$ and is not one of the candidates for $o_k$. Draw an arc in the upper half-plane connecting $o_k$ to $c^*$. After all openers are  connected in this manner, the resulting matching in the upper half-plane is $M=\phi_1(w_1, \dots, w_n)$.

\begin{figure}[h]
\begin{center}
\includegraphics[height=3cm]{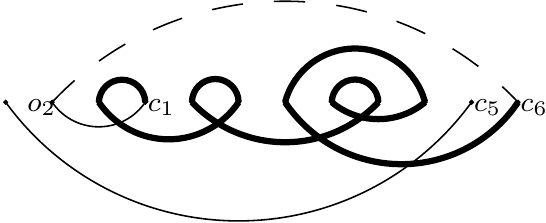}
\end{center}
\caption{The solid arcs in the top half-plane represent the partial matching $N_2$. The candidates for $o_2$ are $c_1$ and $c_5$. If $w_2 = 1$, $o_2$ will try to connect to $c_1$, but since it is already connected to an opener, we follow the bold path that starts with $c_1$ to reach $c^* = c_6$ and connect it to $o_2$.}
\label{bijtwo}
\end{figure}

Let $M_n =M, M_{n-1}, \dots, M_2, M_1=M_0$ be the intermediary sequence of matchings constructed when sorting $M$ to $M_0$. Then $M_k(o_k)$ is exactly the closer $c_{k_0}$ defined above. This means that $\sor_k(M, M_0) = w_k-1$ and therefore $\sor(M, M_0) = \sum_{k=1}^n (w_k-1)$. This property also gives us a way of finding the sequence $w_1, \dots, w_n)$ which corresponds to a given $M \in \mathcal{M}_n(D).$ Namely, $w_k = \sor_k(M, M_0)+1$.

To prove the second property of $\phi_1$, we analyze when connecting $o_k$ by an arc will close a cycle. There are two cases.
\begin{enumerate}
\item The closer $c_{k_0}$ which was $w_k$-th on the list of candidates for $o_k$ was not incident to an arc in the partial matching $N_k$ and we drew the arc $o_k \cdot c_{k_0}$. If $w_k=1$, then $c_{k_0} = M_0(o_k)$ and the arcs connecting $o_k$ and $c_{k_0}$ in the upper and lower half-planes close a cycle. Otherwise,  $M_0(c_{k_0}) < o_k$ and therefore $M_0(c_{k_0})$ is not incident to an arc in $N_k$ and the arc $o_k \cdot c_{k_0}$ will not close a cycle.

\item The closer $c_{k_0}$ which was $w_k$-th on the list of candidates for $o_k$ was incident to an arc in the partial matching $N_k$ and we drew the arc $o_k \cdot c^*$. If $w_k=1$, the path traced from $c_{k_0}$ to $c^*$, the arc $o_k \cdot c_{k_0}$ in $M_0$, and the newly added arc $o_k \cdot c^*$ form a cycle. Otherwise,  connecting $o_k$ to $c^*$ does not close a cycle since the opener  $M_0(c_{k_0})$ is in the same connected component of the graph $(M, M_0)$ as $o_k$, but is not connected to a closer yet, since $M_0(c_{k_0}) < o_k$. \end{enumerate}

We conclude that a cycle is closed exactly when $w_k = 1$ and therefore $$\Cyc(\phi_1(w_1, \dots, w_n), M_0) = \{k: w_k =1\}.$$

Finally, we prove the third property of $\phi_1$. If $M_0$ is a nonnesting matching, its edges are $o_k \cdot c_k$ where the openers and closers are indexed in ascending order.  Let $M = \phi_1(w_1, \dots, w_n)$. The following observations are helpful. When connecting $o_k$ in the construction of $M$, the first choice for $o_k$, i.e.,  the $w_k$-th candidate for $o_k$ is exactly $c_{k+1-w_k}$. Also, $M(o_k) \geq c_{k+1-w_k}$. Furthermore, if $c_k$ was not a candidate for $M(c_k)$, i.e. if the edge $c_k$ was chosen as a partner for $M(c_k)$ by following a path in the graph as described above, then $k \notin \Short(M)$. Namely the edge $M(c_{k_0}) \cdot c_{k_0}$, where $c_{k_0}$ was the first choice when the opener $M(c_k)$ was connected in the construction of $M$, is nested below it.

For a number $k \in [n]$ there are three possibilities:
\begin{enumerate}

\item $k \notin \{2-w_1, 3-w_2, \dots, n+1-w_n\}$

In this case, $c_k$ was not a first choice for any of the openers and therefore must have been connected to an opener by following a path in the graph $(M, M_0)$. It follows from the observation above that $k \notin \Short(M)$. 

\item $k \in \{2-w_1, 3-w_2, \dots, n+1-w_n\}$ and $k \in \Rlminl(2-w_1, 3-w_2, \dots, n+1-w_n)$

Then $c_k$ was a first choice for at least one opener. Let $o$ be the largest one. Then all openers to the right of $o$ got connected to a closer which is greater than $c_k$, so no edge is nested below $o \cdot c_k \in M$. Consequently, $k \in \Short(M)$.

\item $k \in \{2-w_1, 3-w_2, \dots, n+1-w_n\}$ but $k \notin \Rlminl(2-w_1, 3-w_2, \dots, n+1-w_n)$

In this case, let $m+1-w_{m}$ be the rightmost number in the sequence $(2-w_1, \dots, n+1-w_n)$ which is smaller than $k$. It is necessarily to the right of $k$ in this sequence and belongs to $\Rlminl(2-w_1,  \dots, n+1-w_n)$. This implies that the edge $o_m \cdot c_{m+1-w_{m}}$ is in $M$, while $M(o_l) > c_k$ for all $l >m$. So, $M(c_k) < o_m$ and therefore the edge $o_m \cdot c_{m+1-w_{m}}$ is nested below $M(c_k) \cdot c_k$, which means that $k \notin \Rlminl(2-w_1, 3-w_2, \dots, n+1-w_n)$.

\end{enumerate}

\end{proof}

 \begin{corollary} \label{cora} Let $M_0 \in \mathcal{M}_n(D)$ and let $(h_1, \dots, h_n)$ be the height sequence of $D$. Then

\begin{equation} \label{eqhm} \sum_{M \in \mathcal{M}_{n}(D)} {q^{\sor(M, M_0)}  \prod_{i \in \Cyc(M, M_0)} t_i } =   \prod_{k=1}^n (t_k + q + \cdots + q^{h_{k}-1}). \end{equation}
 
\end{corollary}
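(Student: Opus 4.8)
The plan is to derive Corollary~\ref{cora} directly from Theorem~\ref{thmcycles} by specializing the bijection $\phi_1$ and summing. Since $\phi_1$ is a bijection from the product set $\{(w_1,\dots,w_n) : 1 \leq w_i \leq h_i\}$ onto $\mathcal{M}_n(D)$, we may reindex the sum on the left-hand side of~\eqref{eqhm} by the weight vectors $(w_1,\dots,w_n)$. Under this reindexing, part~(a) of Theorem~\ref{thmcycles} tells us that $\sor(M, M_0) = \sum_{i=1}^n (w_i - 1)$, so $q^{\sor(M,M_0)} = \prod_{k=1}^n q^{w_k-1}$. Part~(b) tells us that $\Cyc(M, M_0) = \{k : w_k = 1\}$, so $\prod_{i \in \Cyc(M,M_0)} t_i = \prod_{k : w_k = 1} t_k$, which factors as $\prod_{k=1}^n u_k$ where $u_k = t_k$ if $w_k = 1$ and $u_k = 1$ otherwise.

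First I would write the left-hand side of~\eqref{eqhm} as $\sum_{(w_1,\dots,w_n)} \prod_{k=1}^n \big( [w_k=1] \, t_k + [w_k \neq 1] \big) q^{w_k - 1}$, where the sum ranges over all vectors with $1 \leq w_i \leq h_i$. Next, because both the exponent of $q$ and the product over $\Cyc$ split as products of factors each depending on a single coordinate $w_k$, the whole summand is a product of $n$ terms, the $k$-th depending only on $w_k$. Hence the sum over the product set factors as $\prod_{k=1}^n \big( \sum_{w_k=1}^{h_k} (\cdots) \big)$. Evaluating the $k$-th inner sum: the $w_k = 1$ term contributes $t_k \cdot q^0 = t_k$, and the terms $w_k = 2, \dots, h_k$ contribute $q^{1}, q^{2}, \dots, q^{h_k - 1}$ respectively. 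So the $k$-th factor is exactly $t_k + q + q^2 + \cdots + q^{h_k - 1}$, which gives the right-hand side.

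I do not expect any genuine obstacle here; this is a routine "sum a multiplicative statistic over a product set" argument, and all the substantive work has already been done in establishing the bijection $\phi_1$ and its properties in Theorem~\ref{thmcycles}. The only point requiring a moment's care is confirming that the two statistics being tracked — $\sor(M,M_0)$ in the exponent of $q$ and the set $\Cyc(M,M_0)$ in the product — are \emph{simultaneously} read off from the same weight vector under the same bijection $\phi_1$ (they are, since parts~(a) and~(b) refer to one and the same $\phi_1$), so that the change of variables is legitimate and the resulting summand really does decouple coordinate-by-coordinate. One might also remark that part~(c) is not needed for this corollary but will be used later to obtain the refined, set-valued version analogous to~\eqref{BW}.
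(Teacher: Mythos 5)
Your proof is correct and is exactly the argument the paper intends: the corollary is stated without proof as an immediate consequence of Theorem~\ref{thmcycles}, obtained by reindexing the sum over the weight vectors via $\phi_1$, using parts~(a) and~(b) to rewrite the summand, and factoring the sum over the product set coordinate by coordinate. No gaps.
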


Combining Theorem~\ref{thmbasic} and Corollary~\ref{cora} we get the following corollary.

\begin{corollary} Let $M_0 \in \mathcal{M}_n(D)$ and let $(h_1, \dots, h_n)$ be the height sequence of $D$. Then  \[ \sum_{M \in \mathcal{M}_{n}(D)} {q^{\sor(M, M_0)}  \prod_{i \in \Cyc(M, M_0)} t_i } = \sum_{M \in \mathcal{M}_{n}(D)} {q^{\nn(M)}  \prod_{i \in \Long(M)} t_i }.\] 
\end{corollary}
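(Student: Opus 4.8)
The plan is to combine the two previously established generating function identities. By Corollary~\ref{cora}, for any fixed $M_0 \in \mathcal{M}_n(D)$ with $D$ having height sequence $(h_1,\dots,h_n)$, we have
\[
\sum_{M \in \mathcal{M}_n(D)} q^{\sor(M,M_0)} \prod_{i \in \Cyc(M,M_0)} t_i = \prod_{k=1}^n (t_k + q + \cdots + q^{h_k-1}).
\]
On the other hand, Theorem~\ref{thmbasic} with the specialization $p = 1$ and $s_i = 1$ for all $i$ (so that the factor $\prod_{i \in \mathrm{Left}(M)} s_i$ disappears and the crossing statistic is no longer tracked) gives
\[
\sum_{M \in \mathcal{M}_n(D)} q^{\nn(M)} \prod_{i \in \mathrm{Long}(M)} t_i = \prod_{k=1}^n (t_k + q + \cdots + q^{h_k-1}),
\]
since each factor $t_k p^{h_k-1} + p^{h_k-2}q + \cdots + p q^{h_k-2} + s_k q^{h_k-1}$ collapses to $t_k + q + \cdots + q^{h_k-1}$ under that substitution. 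The two right-hand sides are literally the same polynomial in $q$ and the $t_i$'s, so the two left-hand sides are equal, which is exactly the claimed identity.

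First I would state that we fix an arbitrary $M_0 \in \mathcal{M}_n(D)$ and invoke Corollary~\ref{cora} to rewrite the left-hand side of the desired equation as $\prod_{k=1}^n (t_k + q + \cdots + q^{h_k-1})$. Then I would invoke Theorem~\ref{thmbasic}, carry out the substitution $p=1$, $s_1 = \cdots = s_n = 1$, observe that $\nn = \mathrm{ne}$ is the nesting statistic tracked by $q$ and that $\mathrm{Long}$ is tracked by the $t_i$, and conclude that the right-hand side of the desired equation also equals $\prod_{k=1}^n (t_k + q + \cdots + q^{h_k-1})$. Equating the two completes the argument; this is essentially a one-line deduction once the two ingredients are in hand, so there is no real obstacle — the only thing to be careful about is checking that the specialization of Theorem~\ref{thmbasic} indeed produces the same product, and that the role of the variables $t_i$ matches on both sides (it does, since in both cases $t_i$ is indexed by the opener/fall position $k$ via the height sequence).

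Since the identity holds for every choice of $M_0 \in \mathcal{M}_n(D)$, one may additionally remark that the distribution of $(\nn, \mathrm{Long})$ over $\mathcal{M}_n(D)$ is independent of $M_0$, which is the reason the comparison works uniformly; this observation is not needed for the proof but clarifies why no choice of base matching is privileged. I expect the writeup to be short — three or four sentences — and the main (very mild) point to emphasize is simply that the product formula in Theorem~\ref{thmbasic} degenerates correctly under $p \to 1$, $s_i \to 1$.
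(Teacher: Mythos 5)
Your proposal is correct and matches the paper's own (one-line) argument exactly: the paper derives this corollary by ``combining Theorem~\ref{thmbasic} and Corollary~\ref{cora},'' i.e., by observing that the specialization $p=1$, $s_i=1$ of Theorem~\ref{thmbasic} and the product in Corollary~\ref{cora} are the same polynomial $\prod_{k=1}^n (t_k + q + \cdots + q^{h_k-1})$. Nothing is missing.
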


\begin{corollary} \label{lastcorcycles} If $M_0$ is the unique nonnesting matching of type $D$ then the multisets
 $$\{(\sor(M, M_0), \Cyc(M, M_0), \Short(M)) : M \in \mathcal{M}_n(D)\}$$ and $$\{(\nn(M), \Long(M), \Short(M)) : M \in \mathcal{M}_n(D)\}$$ are equal.
\end{corollary}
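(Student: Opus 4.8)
The plan is to deduce Corollary~\ref{lastcorcycles} by combining the refined bijective information already assembled in Theorem~\ref{thmcycles}, Theorem~\ref{thmbasic}, and Proposition~\ref{propone}. The statement is an equality of multisets, so the cleanest route is to produce, for each side, an explicit parametrization of the triples by weight sequences $(w_1,\dots,w_n)$ with $1\le w_i\le h_i$, and then observe that the two parametrizations agree term by term.

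For the first multiset, fix $M_0$ to be the unique nonnesting matching of type $D$ and let $\phi_1$ be the bijection of Theorem~\ref{thmcycles}. Then $\phi_1$ is a bijection from $\{(w_1,\dots,w_n): 1\le w_i\le h_i\}$ onto $\mathcal{M}_n(D)$, and parts (a), (b), (c) say precisely that under $M=\phi_1(w_1,\dots,w_n)$ we have $\sor(M,M_0)=\sum_{i=1}^n(w_i-1)$, $\Cyc(M,M_0)=\{k: w_k=1\}$, and $\Short(M)=\Rlminl(2-w_1,3-w_2,\dots,n+1-w_n)$. Hence
\[
\{(\sor(M,M_0),\Cyc(M,M_0),\Short(M)): M\in\mathcal{M}_n(D)\}
\]
equals, as a multiset, the image of $\{(w_1,\dots,w_n):1\le w_i\le h_i\}$ under the map
\[
(w_1,\dots,w_n)\longmapsto\Bigl(\textstyle\sum_{i=1}^n(w_i-1),\ \{k:w_k=1\},\ \Rlminl(2-w_1,\dots,n+1-w_n)\Bigr).
\]

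For the second multiset I would run the same argument through $\varphi_1$. The proof of Theorem~\ref{thmbasic} shows that for $M=\varphi_1(D,(w_1,\dots,w_n))$ the arc of the $k$-th opener is a right arc in exactly $w_k-1$ nestings, so $\nn(M)=\sum_{i=1}^n(w_i-1)$, and $k\in\Long(M)$ iff $w_k=1$, i.e.\ $\Long(M)=\{k:w_k=1\}$. Proposition~\ref{propone} gives $\Short(M)=\Rlminl(2-w_1,\dots,n+1-w_n)$. Since $\varphi_1$ is also a bijection from $\{(w_1,\dots,w_n):1\le w_i\le h_i\}$ onto $\mathcal{M}_n(D)$, the multiset $\{(\nn(M),\Long(M),\Short(M)): M\in\mathcal{M}_n(D)\}$ is the image of the weight sequences under the \emph{same} map displayed above. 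Therefore the two multisets coincide, which is the assertion of Corollary~\ref{lastcorcycles}.

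The only genuine subtlety — and the one step worth spelling out rather than citing — is that both statistic triples factor through the \emph{same} function of $(w_1,\dots,w_n)$; once that is granted, the conclusion is immediate because $\phi_1$ and $\varphi_1$ share the same domain and codomain. I would note in passing that no claim is made that $\phi_1=\varphi_1$ (in general they differ, as $\phi_1$ depends on $M_0$ through the sorting construction), so the argument is really "equidistribution via two parametrizations'' rather than "the same matching.'' Consequently there is no real obstacle here: the work has all been done in Theorem~\ref{thmcycles}, Theorem~\ref{thmbasic}, and Proposition~\ref{propone}, and the corollary is a bookkeeping synthesis of those three facts.
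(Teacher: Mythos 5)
Your proof is correct and is essentially the paper's argument: the paper's own proof is the one-line citation ``Follows from Proposition~\ref{propone} and Theorem~\ref{thmcycles},'' and what you have written is exactly the intended expansion of that citation, namely that both triples are the image of the weight sequences under the same map, via $\phi_1$ on one side and $\varphi_1$ (together with the nesting/Long computation from the proof of Theorem~\ref{thmbasic}) on the other. Your remark that $\phi_1$ and $\varphi_1$ need not coincide, so the equality is of multisets rather than of individual matchings, is a correct and worthwhile clarification.
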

\begin{proof} Follows from Proposition~\ref{propone} and Theorem~\ref{thmcycles}.
\end{proof}

 \subsection{Connections with restricted permutations}
 
For a fixed $n$, let $\bf{r}$ denote the non-decreasing sequence of integers $1 \leq r_1 \leq r_2 \leq \cdots \leq r_n \leq n$.  Let 
  
 \[ S_{\bf{r}} = \{ \sigma \in S_n : \sigma(k) \leq r_k, 1 \leq k \leq n\}. \]
 
Note that $S_{\bf{r}} \neq \emptyset$ precisely when $r_k \geq k$, for all $k$, so we will consider only the sequences that satisfy this condition without explicitly mentioning it. Let $D(\bf{r})$ be the unique Dyck path whose $k$-th fall is preceded by  exactly $r_k$ rises. Consider the following bijection $f_{\bf{r}}: S_{\bf{r}} \rightarrow \mathcal{M}_n(D(\bf{r}))$. If $\sigma \in S_{\bf{r}}$, then $f_{\bf{r}}(\sigma)$ is the matching in $\mathcal{M}_n(D(\bf{r}))$ with edges $o_{\sigma(k)} \cdot c_k$, where  $o_1 < \cdots < o_n$ are the openers and $ c_1 < \cdots < c_n$ are the closers. It is not difficult to see that $f_{\bf{r}}$ is well defined and that it is a bijection.

Two arcs $o_{\sigma(j)} \cdot c_j$ and $o_{\sigma(k)} \cdot c_k$ in $f_{\bf{r}}(\sigma)$ with $j <k$  form a nesting if and only if  $\sigma(j) > \sigma(k)$. So,  $\nn(f_{\bf{r}}(\sigma)) = \inv(\sigma)$. Moreover, $\sigma(j) \in \Rlminl(\sigma)$ if and only if $\sigma(j)$ does not form an inversion with a $\sigma(k)$ for any $k > j$, which means if and only if $o_{\sigma(j)} \cdot c_j$ is not nested within anything in $f_{\bf{r}}(\sigma)$, i.e.,  $\sigma(j) \in \Long(f_{\bf{r}}(\sigma))$. From Theorem~\ref{thmbasic} we get the following corollary.

\begin{corollary} Let $\bf{r}$  be a non-decreasing sequence of integers $1 \leq r_1 \leq r_2 \leq \cdots \leq r_n \leq n$ with $r_k \geq k$, for all $k$. Then
\[ \sum_{\sigma \in S_{\bf{r}} } q^{\inv(\sigma)} \prod_{i \in \Rlminl(\sigma)} t_i = \prod_{k=1}^n (t_k    +  q   +  q^2 + \cdots +  q^{h_k-1})\] where $(h_1, \dots, h_n)$ is the height sequence of $D(\bf{r})$. In particular,
\[ \sum_{\sigma \in S_{\bf{r}} } q^{\inv(\sigma)} t^{\mathrm{rlminl}(\sigma)} = \prod_{k=1}^n (t    +  q   +  q^2 + \cdots +  q^{r_k-k}).\]

\end{corollary}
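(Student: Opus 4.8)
The plan is that this corollary is a direct specialization of Theorem~\ref{thmbasic}, transported through the bijection $f_{\mathbf{r}}\colon S_{\mathbf{r}}\to\mathcal{M}_n(D(\mathbf{r}))$ introduced just above the statement. First I would record the two compatibility facts for $f_{\mathbf{r}}$ already observed: for $\sigma\in S_{\mathbf{r}}$ one has $\nn(f_{\mathbf{r}}(\sigma))=\inv(\sigma)$, and since the arcs of $f_{\mathbf{r}}(\sigma)$ are $o_{\sigma(j)}\cdot c_j$, the letter $\sigma(j)$ is a right-to-left minimum of $\sigma$ exactly when $o_{\sigma(j)}\cdot c_j$ is not a right arc of a nesting; as $o_{\sigma(j)}$ is the opener with index $\sigma(j)$ and $\sigma$ is a bijection, this says $\Rlminl(\sigma)=\Long(f_{\mathbf{r}}(\sigma))$ as subsets of $[n]$. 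Next I would put $p=1$ and $s_i=1$ for all $i$ in~\eqref{eqone}: the left side collapses to $\sum_{M\in\mathcal{M}_n(D)}q^{\nn(M)}\prod_{i\in\Long(M)}t_i$ and the $k$-th factor on the right collapses to $t_k+q+\cdots+q^{h_k-1}$. Taking $D=D(\mathbf{r})$ and substituting via $f_{\mathbf{r}}$ turns the left side into $\sum_{\sigma\in S_{\mathbf{r}}}q^{\inv(\sigma)}\prod_{i\in\Rlminl(\sigma)}t_i$, which is the first displayed identity.

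For the ``in particular'' line I would set every $t_i=t$, so that $\prod_{i\in\Rlminl(\sigma)}t_i=t^{\mathrm{rlminl}(\sigma)}$ and the right side becomes $\prod_{k=1}^n(t+q+\cdots+q^{h_k-1})$, which depends only on the multiset $\{h_1,\dots,h_n\}$. It then remains to show this multiset equals $\{\,r_k-k+1:1\le k\le n\,\}$, which rewrites each factor $t+q+\cdots+q^{h_k-1}$ in the form $t+q+\cdots+q^{r_k-k}$ and finishes the proof. To prove the multiset equality I would argue on $D(\mathbf{r})$ directly: $h_j$ is the height at the top of the $j$-th rise, whereas $r_k-k+1$ is the preceding height of the $k$-th fall (before that fall there are $r_k$ rises and $k-1$ falls). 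For each level $a\ge 1$, the number of rises whose top equals $a$ equals the number of falls whose preceding height equals $a$ --- both count the maximal subpaths of $D(\mathbf{r})$ staying at height $\ge a$, since such a subpath begins with a unique rise reaching $a$ and ends with a unique fall leaving $a$ --- so summing over $a'\ge a$ shows the two multisets have the same number of entries $\ge a$ for every $a$, hence they coincide. Equivalently one may use the explicit count $h_j=j-|\{m:r_m<j\}|$, which follows at once from the definition of $D(\mathbf{r})$.

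I do not expect any serious obstacle here: the substantive work --- tracking $\inv$ together with the full \emph{set} $\Rlminl$ --- is already done by Theorem~\ref{thmbasic} and the verified properties of $f_{\mathbf{r}}$, so the corollary is a specialization plus a translation. The only point requiring genuine, if minor, care is the last bookkeeping step, reconciling the rise-indexed height sequence $(h_1,\dots,h_n)$ of $D(\mathbf{r})$ with the fall-indexed data $(r_k-k)_{k=1}^n$; I would isolate that as the one-line multiset lemma above (a routine level-crossing count on Dyck paths) so that the specialization reads cleanly.
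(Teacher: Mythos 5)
Your proposal is correct and follows essentially the same route as the paper: specialize Theorem~\ref{thmbasic} at $p=1$, $s_i=1$, transport through $f_{\mathbf{r}}$ using $\nn(f_{\mathbf{r}}(\sigma))=\inv(\sigma)$ and $\Rlminl(\sigma)=\Long(f_{\mathbf{r}}(\sigma))$, and for the second identity identify the multiset of rise heights with the fall heights $r_k-k+1$. The only difference is that you spell out the rise/fall multiset equality (which the paper dismisses as ``easily seen''), and your level-crossing count is a valid way to do so.
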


 \begin{proof} The first result  follows directly from the discussion above and Theorem~\ref{thmbasic}. For the second equality, note that the height sequence $(h_1, \dots, h_n)$ of the Dyck path $D(\bf{r})$ is a permutation of the sequence of the heights of the falls in $D(\bf{r})$, where the height of a fall is the $y$-coordinate of the higher end of the corresponding $(1,-1)$ step. The height of the $k$-th fall  is easily seen to be $r_k - k +1$. 
 \end{proof}
 
 In particular, when $r_1 = r_2 = \cdots = r_n =n$, we have $S_{\bf{r}} = S_n$. The height sequence of  $D(\bf{r})$ is $(1, 2, \dots, n)$ and we recover the result of Bj\"{o}rner and Wachs about the distribution of $(\inv, \mathrm{Rlmin})$ given in~\eqref{BW}. 
 
If $M_0 \in \mathcal{M}(D({\bf{r}}))$ the sorting index $\sor(\; \cdot \;, M_0)$ induces a permutation statistic on $S_{\bf{r}}$. Namely, if $\sigma, \sigma_0 \in S_{\bf{r}}$, define $$\sor_{\bf{r}}(\sigma, \sigma_0) =  \sor(f_{\bf{r}}^{-1}(\sigma), f_{\bf{r}}^{-1}(\sigma_0)).$$ 

Equivalently, the statistic $\sor_{\bf{r}}(\sigma, \sigma_0)$ on $S_{\bf{r}}$ can be defined directly via a sorting algorithm similar to Straight Selection Sort. Namely, permute the elements in $\sigma \in S_{\bf{r}}$ by applying transpositions which place the largest element $n$ in position $\sigma_0^{-1}(n)$, then the element $n-1$ in position $\sigma_0^{-1}(n-1)$, etc. Let $\sigma_n = \sigma, \sigma_{n-1}, \dots, \sigma_1 = \sigma_0$, be the sequence of permutations obtained in this way. Specifically, $\sigma_k^{-1}(i) = \sigma_0^{-1}(i)$ for $i >k$, and $\sigma_{k-1}$ is obtained by swapping $k$ and $\sigma_k(\sigma_0^{-1}(k))$ in $\sigma_{k}$. 
 
 Let $l = \sigma_k^{-1}(k)$ and $m = \sigma_0^{-1}(k)$.  Define
 
 \begin{equation} \label{akdef} a_k = \begin{cases} |\{ i : l \leq i \leq m, \sigma_0(i) < k \}|, &l<m \\
 0, & l=m \\
|\{i: r_i \geq k, i \notin (m,l), \sigma_0(i) <k\}|, &l>m. \end{cases} \end{equation} Then $$\sor_{\bf{r}}(\sigma, \sigma_0) = \sum_{k=1}^n a_k.$$

Note that, $\sor_{\bf{r}}(\sigma, \sigma_0)$ in general depends on $\bf{r}$. However, the case when $\sigma_0$ is the identity permutation is an exception.
 
 \begin{lemma} \label{permlemma1} Let $\bf{r}$  be a non-decreasing sequence of integers $1 \leq r_1 \leq r_2 \leq \cdots \leq r_n \leq n$ with $r_k \geq k$, for all $k$. Let $\sigma \in S_{\bf{r}}$.  Then $$\sor_{\bf{r}}(\sigma, \text{\bf{id}}) = \sor(\sigma).$$
 \end{lemma}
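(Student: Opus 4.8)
The plan is to show that when $\sigma_0 = \textbf{id}$, the sorting algorithm defining $\sor_{\bf{r}}(\sigma, \textbf{id})$ coincides exactly with the Straight Selection Sort used to define the ordinary sorting index $\sor(\sigma)$, and that the contribution $a_k$ at each step matches the corresponding term $j_s - i_s$ in Petersen's definition. Concretely, I would first observe that since $\sigma_0 = \textbf{id}$, we have $\sigma_0^{-1}(i) = i$ for all $i$, so the algorithm places $k$ into position $k$ at step $k$ (for $k = n, n-1, \dots$), which is precisely Straight Selection Sort. Thus the intermediate permutations $\sigma_k$ are literally the same as those produced when sorting $\sigma$, and the transposition applied at step $k$ is $(k \; l)$ where $l = \sigma_k^{-1}(k)$, with $l \geq k$ (since all larger values are already in place). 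This matches the unique factorization $\sigma = (i_1 j_1) \cdots (i_s j_s)$ with $i_s < j_s$ and increasing $j_s$, reading the transpositions in the order of decreasing larger element.

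Next I would compute $a_k$ explicitly in this case and compare it to $l - k$, the distance traveled by the element $k$ (equivalently, the ``cost'' $j_s - i_s$ of the transposition $(k\; l)$). With $m = \sigma_0^{-1}(k) = k$, the first branch ($l < m$) is impossible because $l \geq k = m$; the second branch gives $a_k = 0$ when $l = m = k$, which agrees with $l - k = 0$; and in the third branch ($l > m = k$) we have
\[ a_k = |\{ i : r_i \geq k, \ i \notin (k, l), \ \sigma_0(i) < k \}| = |\{ i : r_i \geq k, \ i \notin (k,l), \ i < k \}|. \]
The key point is to argue this cardinality equals $l - k$. Since $r$ is non-decreasing and $r_i \geq i$, for $i < k \leq l$ the condition $r_i \geq k$ may fail, so one cannot simply count $\{i : i < k, i \notin (k,l)\} = \{1, \dots, k-1\}$, which would give $k - 1$, not $l - k$. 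Instead I would track how $\sor_{\bf{r}}$ is built through $f_{\bf{r}}$ and Theorem~\ref{thmcycles}(a): there, $\sor(\phi_1(w_1, \dots, w_n), M_0) = \sum (w_i - 1)$ with $1 \leq w_i \leq h_i$, and in the nonnesting case the weight $w_i$ corresponds to counting candidate closers. The cleanest route is probably to use the established identity $\sor_k(M, M_0) = w_k - 1$ from the proof of Theorem~\ref{thmcycles} together with the correspondence $\nn(f_{\bf{r}}(\sigma)) = \inv(\sigma)$ and the explicit description of $M_0 = f_{\bf{r}}^{-1}(\textbf{id})$ as the nonnesting matching, then show directly that $\sor_k$ for this matching equals the displacement of $k$ in Straight Selection Sort.

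Alternatively — and this may be the more transparent argument — I would verify the identity at the level of generating functions or by a direct bijective check: both $\sor_{\bf{r}}(\cdot, \textbf{id})$ restricted to $S_n$ (the case $r_k = n$) and $\sor$ are Mahonian and in fact, by Corollary~\ref{cora} with $M_0$ nonnesting, $\sor_{\bf{r}}(\cdot, \textbf{id})$ has the generating function $\prod_k (t_k + q + \cdots + q^{h_k - 1})$ with $h_k = r_k - (\text{something})$; but distribution equality is weaker than the pointwise equality $\sor_{\bf{r}}(\sigma, \textbf{id}) = \sor(\sigma)$ claimed in the lemma, so I must do the pointwise comparison. I would therefore carefully induct on $n$ (or on the number of sorting steps): strip off the largest element $n$, which sits in position $l = \sigma^{-1}(n)$; the transposition $(n \; l)$ is applied first and contributes $l - n$ to $\sor(\sigma)$; simultaneously in the $\sor_{\bf{r}}$ algorithm $a_n$ is computed with $m = n$, and one checks $a_n = |\{i : r_i \geq n, i < n, i \notin (n,l)\}| = |\{i \leq l : i < n\}| \cap \{i : r_i \geq n\}$; since we need $r_i \geq n$ forces $i$ large, this needs the observation that in $D(\bf{r})$ the relevant ``candidate'' count is governed by the height sequence, not naive interval counting, and this is exactly where Theorem~\ref{thmcycles}(a) does the bookkeeping for us.

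The main obstacle I anticipate is precisely reconciling the formula~\eqref{akdef} for $a_k$ — which is phrased in terms of the constraint sequence $\bf{r}$ and looks genuinely $\bf{r}$-dependent — with the $\bf{r}$-independent quantity $l - k$. The resolution must exploit that when $\sigma_0 = \textbf{id}$, the set $\{i : r_i \geq k, i \notin (k,l), \sigma_0(i) < k\} = \{i < k : r_i \geq k\}$ together with the structural fact that in any valid configuration the element $k$ can only have been displaced rightward past positions $i$ with $i < k$ for which $r_i \geq k$ actually holds — i.e., the ``missing'' positions $i < k$ with $r_i < k$ are exactly compensated by the positions in $(k, l)$ that were skipped, yielding the clean count $l - k$. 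I expect this compensation argument, or equivalently the translation through $f_{\bf{r}}$ and $\varphi_1^{-1}$ showing $w_k - 1 = l - k$, to be the crux; the rest is routine verification that the algorithm reduces to Straight Selection Sort.
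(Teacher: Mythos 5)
There is a genuine error at the very first step, and it derails the rest of the argument. You claim that the transposition applied at step $k$ is $(k\;l)$ with $l=\sigma_k^{-1}(k)$ and $l\geq k$ ``since all larger values are already in place.'' That inequality is backwards: because every value $i>k$ already occupies its correct position $i$ in $\sigma_k$, the positions $k+1,\dots,n$ are all taken by values larger than $k$, so the value $k$ must sit at a position $l\leq k$. (Check the paper's own example: to sort $6571342$ one first applies $(3\;7)$, moving $7$ from position $3$ up to position $7$; here $l=3<k=7$.) Consequently the branch of~\eqref{akdef} that is actually active when $l\neq m=k$ is the first one, $l<m$, which gives
\[
a_k=|\{i: l\leq i\leq k,\ \sigma_0(i)<k\}|=|\{i: l\leq i<k\}|=k-l,
\]
exactly the displacement $j_s-i_s$ in Petersen's factorization. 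The third branch $l>m$, the one you analyze at length and which carries the $r_i\geq k$ condition, simply never occurs; this is precisely the one-line observation the paper makes ($\sigma_k^{-1}(l)=l$ for $l>k$ contradicts $\sigma_k^{-1}(k)=l$).

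Because of this misidentification, the ``main obstacle'' you anticipate --- reconciling the apparently $\bf{r}$-dependent count $|\{i: r_i\geq k,\ i\notin(k,l),\ \sigma_0(i)<k\}|$ with the $\bf{r}$-independent quantity $l-k$ --- is a problem of your own making, and the ``compensation argument'' you sketch to resolve it is not needed (nor is the detour through $f_{\bf{r}}$, $\varphi_1^{-1}$, and Theorem~\ref{thmcycles}(a), nor the generating-function comparison, which you correctly note would only give equidistribution rather than the pointwise identity). As written, the proposal also never actually completes any of the routes it proposes: it ends by describing what the crux ``must'' be rather than proving it. The fix is short: establish $l\leq k$ as above, discard the $l>m$ case, and read off $a_k=k-l$ from the first branch of~\eqref{akdef}.
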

 \begin{proof}  First note that the case $l > m$ in~\eqref{akdef} cannot occur. Namely, in the case when $\sigma_0 = \text{\bf{id}}$, we have $m=k$ and if $l>k$, $\sigma_k^{-1}(l) = \sigma_0^{-1}(l) = l.$ This contradicts $l = \sigma_k^{-1}(k)$. Therefore, the definition of $a_k$ simplifies to
\[  a_k = | \{ i : l \leq i < k \}|. \] This is precisely the ``distance'' that $k$ travels when being placed in its correct position with the Straight Selection Sort algorithm.
  \end{proof}

 \begin{corollary} Let $\bf{r}$  be a non-decreasing sequence of integers $1 \leq r_1 \leq r_2 \leq \cdots \leq r_n \leq n$ with $r_k \geq k$, for all $k$. Let $\sigma_0 \in S_{\bf{r}}$. Then
  \begin{equation} \label{permcor} \sum_{\sigma \in S_{\bf{r}} } {q^{\sor_{\bf{r}}(\sigma, \sigma_0)}  \prod_{i \in \Cyc(\sigma \sigma_0^{-1})} t_i } =   \prod_{i=1}^n (t_i + q + \cdots + q^{h_{i}-1}),\end{equation} where $(h_1, \dots, h_n)$ is the height sequence of $D(\bf{r})$ and $\Cyc(\sigma)$ is the set of the minimal elements in the cycles of $\sigma$. In particular, 
   \begin{equation} \label{permcortwo}\sum_{\sigma \in S_{\bf{r}} } {q^{\sor(\sigma)}  \prod_{i \in \Cyc(\sigma)} t_i } =   \prod_{i=1}^n (t_k + q + \cdots + q^{h_{k}-1})\end{equation} and
   \begin{equation} \sum_{\sigma \in S_{\bf{r}} } q^{\sor(\sigma)} t^{\cyc(\sigma)} = \sum_{\sigma \in S_{\bf{r}} } q^{\inv(\sigma)} t^{\mathrm{rlminl}(\sigma)}  \end{equation}
  
 \end{corollary}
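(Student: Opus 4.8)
The plan is to push Corollary~\ref{cora} forward along the bijection $f_{\bf r}\colon S_{\bf r}\to\mathcal M_n(D({\bf r}))$, exactly as was done on the $(\inv,\Rlminl)$ side. Fix $\sigma_0\in S_{\bf r}$ and put $M_0=f_{\bf r}^{-1}(\sigma_0)\in\mathcal M_n(D({\bf r}))$. By the definition of the induced statistic we have $\sor_{\bf r}(\sigma,\sigma_0)=\sor(f_{\bf r}^{-1}(\sigma),M_0)$ for every $\sigma\in S_{\bf r}$, so \eqref{permcor} will drop out of Corollary~\ref{cora} as soon as we match up the cycle sets on the two sides.

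The one point that genuinely requires an argument is the identity
\[\Cyc\!\left(f_{\bf r}^{-1}(\sigma),M_0\right)=\Cyc\!\left(\sigma\sigma_0^{-1}\right),\]
where on the right $\Cyc$ denotes the set of cycle minima of a permutation. Writing $M=f_{\bf r}^{-1}(\sigma)$, recall that $M$ has arcs $o_{\sigma(k)}\cdot c_k$ and $M_0$ has arcs $o_{\sigma_0(k)}\cdot c_k$, so $M(o_j)=c_{\sigma^{-1}(j)}$ and $M_0(c_k)=o_{\sigma_0(k)}$. Hence, starting from an opener $o_j$ and following the upper arc of $M$ and then the lower arc of $M_0$, one returns to the opener $o_{\sigma_0\sigma^{-1}(j)}$; therefore the set of openers lying on a fixed cycle of the graph $(M,M_0)$ is precisely a cycle of $\pi:=\sigma_0\sigma^{-1}$. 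Moreover, the minimal vertex of any cycle of $(M,M_0)$ is always an opener, because any closer $c$ has both of its two neighbours $M(c)$ and $M_0(c)$ strictly to its left. Consequently $\Cyc(M,M_0)$ is exactly the set of cycle minima of $\pi$; since a permutation and its inverse have the same cycles, this is the set of cycle minima of $\pi^{-1}=\sigma\sigma_0^{-1}$, as claimed.

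Granting this, Corollary~\ref{cora} applied with this $M_0$ and $D=D({\bf r})$ reads $\sum_{\sigma\in S_{\bf r}}q^{\sor_{\bf r}(\sigma,\sigma_0)}\prod_{i\in\Cyc(\sigma\sigma_0^{-1})}t_i=\prod_{k}(t_k+q+\cdots+q^{h_k-1})$ with $(h_1,\dots,h_n)$ the height sequence of $D({\bf r})$, which is \eqref{permcor}. For \eqref{permcortwo} specialize $\sigma_0=\text{\bf{id}}$: then $\sigma\sigma_0^{-1}=\sigma$, and Lemma~\ref{permlemma1} gives $\sor_{\bf r}(\sigma,\text{\bf{id}})=\sor(\sigma)$. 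Finally, setting every $t_i=t$ in \eqref{permcortwo} replaces $\prod_{i\in\Cyc(\sigma)}t_i$ by $t^{\cyc(\sigma)}$ and yields $\sum_{\sigma\in S_{\bf r}}q^{\sor(\sigma)}t^{\cyc(\sigma)}=\prod_k(t+q+\cdots+q^{h_k-1})$; the same specialization of the earlier corollary giving $\sum_{\sigma\in S_{\bf r}}q^{\inv(\sigma)}\prod_{i\in\Rlminl(\sigma)}t_i=\prod_k(t_k+q+\cdots+q^{h_k-1})$ yields $\sum_{\sigma\in S_{\bf r}}q^{\inv(\sigma)}t^{\mathrm{rlminl}(\sigma)}=\prod_k(t+q+\cdots+q^{h_k-1})$, and comparing the two right-hand sides proves the last identity.

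Everything outside the second paragraph is routine transport of structure; the main obstacle is the cycle bookkeeping there. One must verify that the a priori $2n$-vertex cycle structure of the graph $(M,M_0)$ is faithfully encoded by the $n$-element permutation $\sigma\sigma_0^{-1}$, and in particular that every cycle of the graph has an opener as its minimum, so that ``minimal vertex of a cycle'' on the matching side and ``minimal element of a cycle'' on the permutation side truly correspond.
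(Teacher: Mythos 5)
Your proposal is correct and follows essentially the same route as the paper: transport Corollary~\ref{cora} along $f_{\bf r}$, identify the cycles of the graph $(M,M_0)$ with the cycles of $\sigma_0\sigma^{-1}$ (hence of its inverse $\sigma\sigma_0^{-1}$), and specialize to $\sigma_0=\text{\bf{id}}$ via Lemma~\ref{permlemma1} and to $t_i=t$ for the final identity. Your extra observation that every cycle minimum must be an opener (since each closer has both neighbours to its left) is a detail the paper leaves implicit, but it is not a different argument.
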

 
 \begin{proof} Let $f_{\bf{r}} (\sigma_0) = M_0$ and $f_{\bf{r}} (\sigma) = M$. The cycle $ k \rightarrow \sigma_0\sigma^{-1}(k) \rightarrow \cdots \rightarrow (\sigma_0\sigma^{-1})^s(k) =k$ of the permutation $\sigma_0\sigma^{-1}$ corresponds to the cycle $o_k \; \includegraphics[height=0.2cm]{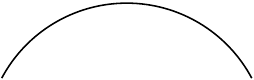} \; M(o_k)  \; \includegraphics[height=0.2cm]{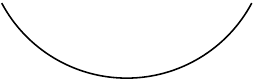} \; M_0(M(o_k)) \;  \includegraphics[height=0.2cm]{arc-crop.pdf} \; \cdots  \; \includegraphics[height=0.2cm]{arcreflected-crop.pdf} \; o_k$ in the graph $(M, M_0)$. So, $k \in \Cyc(\sigma_0\sigma^{-1})$ if and only if $k \in \Cyc(M, M_0)$. Now,~\eqref{permcor} follows from~\eqref{eqhm} and the fact that the cycles of $\sigma\sigma_0^{-1}$ are equal to  the cycles of $\sigma_0\sigma^{-1}$ reversed. Since $\text{\bf{id}} \in S_{\bf{r}}$ for every sequence $\bf{r}$, we get~\eqref{permcortwo} as a corollary of Lemma~\ref{permlemma1}. 
 \end{proof}
 
 Let $\Lrmaxp(\sigma)$ denote the set of left-to-right maximum places in the permutation $\sigma$, i.e, 
 \[ \Lrmaxp(\sigma) = \{ k: \sigma(k) > \sigma (j) \text{ for all } j<k \}.\]  From Corollary~\ref{lastcorcycles} we get the following result for restricted permutations. 
 \begin{corollary} The triples $(\inv, \Rlminl, \Lrmaxp)$ and $(\sor, \Cyc, \Lrmaxp)$ are equidistributed on $S_{\bf{r}}$. That is, the multisets $$\{ (\inv(\sigma), \Rlminl(\sigma), \Lrmaxp(\sigma)): \sigma \in S_{\bf{r}} \}$$ and $$\{ (\sor(\sigma), \Cyc(\sigma), \Lrmaxp(\sigma)) : \sigma \in S_{\bf{r}} \}$$ are equal.
 
 \end{corollary}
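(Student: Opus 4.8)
The plan is to read off the claim from Corollary~\ref{lastcorcycles} by pulling the two equal multisets back along the bijection $f_{\mathbf{r}}\colon S_{\mathbf{r}}\to\mathcal{M}_n(D(\mathbf{r}))$, so that the whole task reduces to checking how $f_{\mathbf{r}}$ transforms the three statistics appearing in the triples. The natural choice of reference matching is $M_0=f_{\mathbf{r}}(\mathbf{id})$: its edges are $o_k\cdot c_k$, so $M_0$ is precisely the unique nonnesting matching of type $D(\mathbf{r})$, and hence Corollary~\ref{lastcorcycles} is available for this $M_0$.

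Next I would line up the statistics coordinate by coordinate. For the first coordinate, the discussion preceding the $(\inv,\Rlminl)$ corollary already gives $\nn(f_{\mathbf{r}}(\sigma))=\inv(\sigma)$, while on the sorting side the definition of $\sor_{\mathbf{r}}$ together with Lemma~\ref{permlemma1} gives $\sor(f_{\mathbf{r}}(\sigma),M_0)=\sor_{\mathbf{r}}(\sigma,\mathbf{id})=\sor(\sigma)$. For the second coordinate, the same discussion gives $\Long(f_{\mathbf{r}}(\sigma))=\Rlminl(\sigma)$ as subsets of $[n]$, under the identification of a value $\sigma(j)$ with the index of the opener $o_{\sigma(j)}$; and exactly as in the derivation of~\eqref{permcor}, the cycles of the graph $(f_{\mathbf{r}}(\sigma),M_0)$ are the cycles of $\sigma^{-1}$, hence of $\sigma$, with the same minimal elements, so $\Cyc(f_{\mathbf{r}}(\sigma),M_0)=\Cyc(\sigma)$.

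The only ingredient not already present in the excerpt is the third coordinate, which I would isolate as a short observation: $\Short(f_{\mathbf{r}}(\sigma))=\Lrmaxp(\sigma)$ as subsets of $[n]$. This is immediate from the definitions. In $M=f_{\mathbf{r}}(\sigma)$ the arc incident to the $k$-th closer $c_k$ is $o_{\sigma(k)}\cdot c_k$, and this arc is the left (outer) arc of a nesting precisely when there is another arc $o_{\sigma(j)}\cdot c_j$ with $o_{\sigma(k)}<o_{\sigma(j)}$ and $c_j<c_k$, i.e. with $j<k$ and $\sigma(j)>\sigma(k)$. Hence $k\in\Short(M)$ iff $\sigma(j)<\sigma(k)$ for every $j<k$, which is exactly the condition $k\in\Lrmaxp(\sigma)$.

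Finally I would assemble the pieces: applying Corollary~\ref{lastcorcycles} with $D=D(\mathbf{r})$ and $M_0=f_{\mathbf{r}}(\mathbf{id})$ and transporting the two equal multisets of triples through $f_{\mathbf{r}}$, the three correspondences above turn $\{(\sor(M,M_0),\Cyc(M,M_0),\Short(M)):M\in\mathcal{M}_n(D(\mathbf{r}))\}$ into $\{(\sor(\sigma),\Cyc(\sigma),\Lrmaxp(\sigma)):\sigma\in S_{\mathbf{r}}\}$ and $\{(\nn(M),\Long(M),\Short(M)):M\in\mathcal{M}_n(D(\mathbf{r}))\}$ into $\{(\inv(\sigma),\Rlminl(\sigma),\Lrmaxp(\sigma)):\sigma\in S_{\mathbf{r}}\}$, which is the asserted equidistribution. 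I do not expect a genuine obstacle here beyond bookkeeping; the only place that rewards care is keeping the index/value identifications straight — which value plays which role in $\Rlminl$ versus $\Long$, and which positions occur in $\Short$ versus $\Lrmaxp$ — which is why I would spell out the $\Short\leftrightarrow\Lrmaxp$ correspondence in full rather than leave it to analogy with the $\nn/\Long$ case.
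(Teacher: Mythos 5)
Your proposal is correct and follows exactly the route the paper intends: the corollary is deduced from Corollary~\ref{lastcorcycles} by transporting the multisets through $f_{\bf{r}}$ with $M_0=f_{\bf{r}}(\text{\bf{id}})$ the nonnesting matching, using $\nn\leftrightarrow\inv$, $\Long\leftrightarrow\Rlminl$, $\sor(\cdot,M_0)\leftrightarrow\sor$, $\Cyc(\cdot,M_0)\leftrightarrow\Cyc$, and $\Short\leftrightarrow\Lrmaxp$. The paper leaves all of these identifications implicit, whereas you verify the $\Short(f_{\bf{r}}(\sigma))=\Lrmaxp(\sigma)$ step explicitly, and that verification is accurate.
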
 The equidistribution of the pairs $(\Rlminl, \Lrmaxp)$ and $(\Cyc, \Lrmaxp)$ on $S_{\bf{r}}$ for the special case when  the corresponding Dyck path $D(\bf{r})$ is of the form $u^{k_1}d^{k_1}u^{k_2}d^{k_2} \cdots u^{k_s}d^{k_s}$ was shown by Foata and Han~\cite{FH} .

 \begin{corollary} Let  $\sigma_0 \in S_{\bf{r}}$. Then
  \begin{equation} \label{lasthm} \sum_{\sigma \in S_{\bf{r}} }    t^{\cyc(\sigma \sigma_0^{-1})}  = \prod_{k=1}^{n} (t + r_k -k).\end{equation} In particular, the left-hand side of~\eqref{lasthm} does not depend on $\sigma_0$. 
 \end{corollary}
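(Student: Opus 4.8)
The plan is to obtain this identity as a one-line specialization of the refined generating function~\eqref{permcor} established in the preceding corollary. In~\eqref{permcor} I would first set $q = 1$. The left-hand side becomes $\sum_{\sigma \in S_{\bf r}} \prod_{i \in \Cyc(\sigma\sigma_0^{-1})} t_i$ and the right-hand side becomes $\prod_{k=1}^n (t_k + h_k - 1)$, where $(h_1,\dots,h_n)$ is the height sequence of $D({\bf r})$. Then I would set $t_i = t$ for every $i$. Since every cycle of a permutation contains exactly one minimal element, $|\Cyc(\pi)| = \cyc(\pi)$ for any $\pi$, so the left-hand side collapses to $\sum_{\sigma \in S_{\bf r}} t^{\cyc(\sigma\sigma_0^{-1})}$, while the right-hand side becomes $\prod_{k=1}^n (t + h_k - 1)$.

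It then remains only to identify the multiset $\{h_1,\dots,h_n\}$ with $\{r_1 - 1 + 1,\, r_2 - 2 + 1,\, \dots,\, r_n - n + 1\}$. This is exactly the observation already used in the proof of the earlier corollary on $S_{\bf r}$: the height sequence of a Dyck path is a rearrangement of the sequence of heights of its falls, because the path is a closed nonnegative excursion and hence, for each level, the number of up-steps whose top lies at that height equals the number of down-steps whose top lies there. By the defining property of $D({\bf r})$, the $k$-th fall is preceded by exactly $r_k$ rises and $k-1$ falls, so the top of the $k$-th down-step sits at height $r_k - k + 1$. Therefore $\{h_k\}_{k=1}^n = \{r_k - k + 1\}_{k=1}^n$ as multisets, and consequently
\[ \prod_{k=1}^n (t + h_k - 1) = \prod_{k=1}^n \big(t + (r_k - k + 1) - 1\big) = \prod_{k=1}^n (t + r_k - k), \]
which is the claimed identity. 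The final assertion that the left-hand side is independent of $\sigma_0$ is then immediate, since the right-hand side does not involve $\sigma_0$ at all.

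I do not expect a genuine obstacle here: the content of the corollary is entirely contained in~\eqref{permcor}, and the passage to $q=1$, $t_i = t$ is routine. The only step needing a moment's care is the multiset identity between rise heights and fall heights of $D({\bf r})$, but this is the same elementary combinatorial fact already invoked earlier in the section, so it can simply be cited rather than reproved. One could equally phrase the proof as an application of~\eqref{eqhm} transported through the bijection $f_{\bf r}$ as in the proof of~\eqref{permcor}, but specializing~\eqref{permcor} directly is the shortest route.
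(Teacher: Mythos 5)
Your proposal is correct and is exactly the derivation the paper intends: the corollary is an immediate specialization of~\eqref{permcor} at $q=1$, $t_i=t$, combined with the multiset identity $\{h_k\}=\{r_k-k+1\}$ between rise heights and fall heights of $D({\bf r})$ that the paper already established in the proof of the earlier corollary on $(\inv,\Rlminl)$. Nothing further is needed.
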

 
 We remark that the sets  $\{ \sigma \sigma_0^{-1} : \sigma \in S_{\bf{r}} \}$ and $S_{\bf{r}}$ are in general  not equal. For example, let $\sigma_0 = 143265 \in S_{[4, 4, 4, 6, 6, 6]}$. Then $\sigma = 231546  \in S_{[4, 4, 4, 6, 6, 6]}$ but $\sigma \sigma_0^{-1} = 251364 \notin  S_{[4, 4, 4, 6, 6, 6]}$.

The polynomial $\prod_{k=1}^{n} (t + r_k -k)$ is well-known in rook theory. It is  equal~\cite{G} to the polynomial $$\sum_{k=0}^n r_{n-k} (t-1) (t-2) \cdots (t-k)$$ where $r_k$ is the number of placements of $k$ non-atacking rooks on a Ferrers board with rows of length $r_1, r_2, \dots, r_n$.


\section{Bicolored matchings}
\label{bimat}

In this section we consider statistics on the set $\mathcal{M}_n^{(2)}$ of bicolored matchings on $[2n]$,  whose $n$ edges are colored with one of two colors: red or blue.  

\subsection{Bicolored crossings and nestings}

Bicolored matchings have four types of crossings, depending on the color of the right and the left edge that form the crossing, as well as four types of nestings and four types of alignments. Let
$\cc_{*r}(M)$ be the number of crossings in $M$ in which the right edge is red, regardless of the color of the left edge, and analogously define the numbers $\cc_{*b}(M)$, $\nn_{*r}(M)$, $\nn_{*b}(M)$, $\al_{*r}(M)$, $\al_{*b}(M)$. Additionally, let $\mathrm{b}(M)$ denote the total number of blue edges in $M$, and let $\mathrm{longr}(M)$ denote the number of long red edges -- red edges in $M$ that are not nested within any other edge, while
\[\mathrm{Longr}(M) = \{k: o_k \cdot M(o_k) \text{ is a large red edge}\}.\] 
The generating function of these refined statistics for bicolored matchings of type $D$ 
\[ P_D({\bf{q}},p, {\bf{t}}) =\sum_{M \in  \mathcal{M}_n^{(2)}(D)} q_1^{\nn_{*r}(M)}q_2^{\nn_{*b}(M)}q_3^{\cc_{*r}(M)}q_4^{\cc_{*b}(M)}q_5^{\al_{*r}(M)}q_6^{\al_{*b}(M)} p^{\mathrm{b}(M)} \prod_{i \in \mathrm{Longr}(M)} t_i  \]  is given by the following theorem. In the proof we will use the set $\mathcal{WD}^{(2)}_n$ of all (partially) bicolored weighted Dyck paths whose rises are colored red or blue. The elements in $\mathcal{WD}^{(2)}_n$ can be written as triples $(D, (w_1, \dots, w_n), (\epsilon_1, \dots, \epsilon_n))$ where $D$ is a Dyck path of semilength $n$, $w_i \in \mathbb{Z}$ with $ 1\leq w_i \leq h_i$, where $(h_1, \dots, h_n)$ is the height sequence of $D$, and $\epsilon_i \in \{0,1\}$, for $1 \leq i \leq n$. Here we are using $\epsilon_i = 0$ to represent a red rise and $\epsilon_i = 1$ to represent a blue rise.

\begin{theorem} \label{thmsigned}
Let $D$ be a Dyck path with height sequence $(h_1, \dots, h_n)$. 
Then
\[ P_D({\bf{q}},p, {\bf{t}})= \prod_{i=1}^{n} \sum_{k=1}^{h_i}(q_1^{k-1} q_3^{h_i-k} q_5^{i-h_i} t_i^{\delta_{k,1}} + q_2^{h_i-k} q_4^{k-1} q_6^{i-h_i} p  ),\]
where $\delta_{i,j}$ is the Kronecker delta function.
\end{theorem}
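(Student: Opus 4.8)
The plan is to establish a bijection $\varphi_2: \mathcal{WD}^{(2)}_n \to \mathcal{M}_n^{(2)}$ that extends $\varphi_1$ by handling colors, and then track how each refined statistic is distributed over the fiber of a fixed type $D$. Concretely, I would first define $\varphi_2$ on a triple $(D, (w_1,\dots,w_n), (\epsilon_1,\dots,\epsilon_n))$ by taking the underlying (uncolored) matching $M = \varphi_1(D, (w_1,\dots,w_n))$ and then declaring the $k$-th edge (the one whose opener is $o_k$) to be red if $\epsilon_k = 0$ and blue if $\epsilon_k = 1$. Since $\varphi_1$ is a bijection from weighted Dyck paths to $\mathcal{M}_n(D)$, adding an independent two-coloring of the $n$ edges gives a bijection onto $\mathcal{M}_n^{(2)}(D)$. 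This immediately accounts for the factor $p^{\mathrm{b}(M)}$: the product over $i$ of the term $p^{\epsilon_i}$ splits into $\prod_i (1 + p)$-type contributions, which is where the two summands (one without $p$, one with $p$) in each factor come from.

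Next I would reread the proof of Theorem~\ref{thmbasic}: there it is shown that the edge $o_k \cdot M(o_k)$ is a right arc in exactly $w_k - 1$ nestings and exactly $h_k - w_k$ crossings. The crucial refinement here is that the color of the \emph{right} arc of any crossing or nesting in which $o_k\cdot M(o_k)$ participates as the right arc is exactly the color $\epsilon_k$ of that edge. Hence edge $k$ contributes $q_1^{w_k-1}q_3^{h_k-w_k}$ to the $\nn_{*r}, \cc_{*r}$ bookkeeping when $\epsilon_k = 0$, and $q_2^{h_k-w_k}q_4^{w_k-1}$ when $\epsilon_k = 1$ (note the swap of exponents $w_k - 1 \leftrightarrow h_k - w_k$ between the two color cases, matching the asymmetry in the claimed formula — this should be double-checked against the convention for which of $q_1^{k-1}$ vs $q_3^{h_i - k}$ is nesting vs crossing). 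For alignments, I would argue that the total number of crossings, nestings, and alignments in which $o_k \cdot M(o_k)$ is the \emph{right} arc is $k - 1$ (it is the right arc with respect to each of the $k-1$ earlier openers $o_1, \dots, o_{k-1}$, and exactly one of crossing/nesting/alignment occurs with each). Thus the number of alignments with right arc $k$ is $(k-1) - (w_k - 1) - (h_k - w_k) = k - h_k$, and again the color recorded is $\epsilon_k$, giving $q_5^{k - h_k}$ or $q_6^{k-h_k}$. Finally $k \in \mathrm{Longr}(M)$ iff the edge is red and not nested within anything, i.e. $\epsilon_k = 0$ and $w_k = 1$; this produces the factor $t_k^{\delta_{k,1}}$ attached only to the red summand with $w_k = 1$, i.e. exactly the $t_i^{\delta_{k,1}}$ in the first summand of the $i$-th factor.

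Putting these together, each edge $k$ contributes, independently of the others (given the fixed type $D$), the sum over its weight $w_k \in \{1,\dots,h_k\}$ and its color $\epsilon_k \in \{0,1\}$ of
\[ q_1^{w_k-1} q_3^{h_k-w_k} q_5^{k-h_k} t_k^{\delta_{w_k,1}} \quad(\text{red}) \qquad\text{or}\qquad q_2^{h_k-w_k} q_4^{w_k-1} q_6^{k-h_k} p \quad(\text{blue}),\]
and summing over $w_k$ and taking the product over $k$ yields exactly $P_D({\bf q}, p, {\bf t}) = \prod_{i=1}^n \sum_{k=1}^{h_i}(q_1^{k-1}q_3^{h_i-k}q_5^{i-h_i}t_i^{\delta_{k,1}} + q_2^{h_i-k}q_4^{k-1}q_6^{i-h_i}p)$ (reindexing the weight as $k$). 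The main obstacle I anticipate is purely bookkeeping: getting the exponent conventions straight so that the ``$k-1$ versus $h_i - k$'' split between nestings and crossings is consistent with the definitions of $\nn_{*r}, \cc_{*r}$, etc., and verifying the alignment count $i - h_i$ rigorously — in particular checking it is always nonnegative, which follows since $h_i \le i$ for every rise of a Dyck path. The color-tracking itself is essentially free once one observes that ``the statistic recorded for a crossing/nesting/alignment depends only on the color of its right arc,'' so that the per-edge contributions genuinely decouple.
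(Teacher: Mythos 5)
Your approach is essentially the paper's: a bijection from bicolored weighted Dyck paths onto $\mathcal{M}_n^{(2)}(D)$ under which the six refined statistics decouple edge by edge, followed by summing the per-edge contributions. The one point to settle is exactly the one you flagged: under \emph{your} color-independent attachment rule (always connect $o_k$ to the $w_k$-th available closer), a blue edge with weight $w_k$ is the right arc of $w_k-1$ nestings and $h_k-w_k$ crossings, so its contribution is $q_2^{w_k-1}q_4^{h_k-w_k}q_6^{k-h_k}p$, not $q_2^{h_k-w_k}q_4^{w_k-1}q_6^{k-h_k}p$ as you wrote; the stated formula is still recovered because the blue sum is invariant under the reindexing $k\mapsto h_i+1-k$, but your per-term identification is inconsistent with your own bijection. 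The paper avoids this by building the reversal into $\varphi_2$ itself: a blue opener is attached to the $(h_k-w_k+1)$-st available closer rather than the $w_k$-th, which makes the term-by-term match with the displayed product exact (and is the convention needed later when $w_k$ must align with the sorting index). Your verification of the alignment exponent $i-h_i$ via the count of $k-1$ total pairs with right arc $k$ is correct and matches the paper.
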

\begin{proof} 
To find $P_D(q_1,q_2,q_3,q_4,q_5,q_6,p,t)$ we will describe an appropriate bijection  $\varphi_2$ from $\mathcal{WD}^{2}_n$ to $\mathcal{M}_n^{(2)}$. Let $(D, (w_1, \dots, w_n), (\epsilon_1, \dots, \epsilon_n)) \in \mathcal{WD}^{(2)}_n$. The corresponding matching $M$ has type $D$ and is constructed in the following way. 
The openers $o_1 < \cdots < o_n$ of $M$ are connected to closers from right to left starting with $o_n$. If $\epsilon_k=0$,  connect the opener $o_k$ to the $w_i $-th available closer  from right to left and color the edge red. If $\epsilon_k =1$, connect the opener to the $(h_i - w_i +1)$-st available closer to the right of $o_k$ counted from right to left and color the edge blue. Then if $\epsilon_k=0$ the corresponding red edge $o_k \cdot M(o_k)$ will be a right edge in $w_i-1$ nestings,  $h_i-w_i$ crossings, and  $i -h_i$ alignments. It will be a long edge if and only if $w_i = 1$. Similarly, if $\epsilon_k=1$, then the blue edge $o_k \cdot M(o_k)$ will be a right edge in $h_i-w_i$ nestings, $w_i-1$ crossings, and $i -h_i$ alignments. It will not be a long edge but it will contribute to $\mathrm{b}(M)$. The theorem follows by summing over all possible colorings and weightings of the path $D$.
\end{proof}

This theorem gives the generating function for statistics that can be defined in terms of nestings,  crossing,  and alignments with right edges of specified color. In particular, define
\[\mathrm{mix}(M) = \nn(M) + 2\cc_{\ast b}(M) + 2\al_{\ast b}(M) + \mathrm{b}(M).\]
\begin{corollary} \label{corb} Let $D$ be a Dyck path with height sequence $(h_1, \dots, h_n)$. 
\[ \sum_{M \in  \mathcal{M}_n^{(2)}(D)} q^{\mathrm{mix}(M)} \prod_{i \in \mathrm{Longr}(M)} t_i = \prod_{k=1}^n\left(t_k+ q [h_k -1]_q + q^{2k-h_k} [h_k]_q\right). \]
\end{corollary}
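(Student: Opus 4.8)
The plan is to derive this directly from Theorem~\ref{thmsigned} by specializing the six crossing/nesting/alignment variables and the blue-edge variable to powers of a single $q$ according to the definition of $\mathrm{mix}$. Since
\[ \mathrm{mix}(M) = \nn_{*r}(M) + \nn_{*b}(M) + 2\cc_{*b}(M) + 2\al_{*b}(M) + \mathrm{b}(M), \]
I would substitute $q_1 = q$, $q_2 = q$, $q_3 = 1$, $q_4 = q^2$, $q_5 = 1$, $q_6 = q^2$, and $p = q$ into the product formula of Theorem~\ref{thmsigned}. This leaves the $t_i$'s untouched, and since only red edges can be long, $\prod_{i\in\mathrm{Longr}(M)}t_i$ is exactly the statistic tracked by $\prod_i t_i^{\delta_{k,1}}$ in that theorem.

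Under this substitution the $i$-th factor $\sum_{k=1}^{h_i}(q_1^{k-1}q_3^{h_i-k}q_5^{i-h_i}t_i^{\delta_{k,1}} + q_2^{h_i-k}q_4^{k-1}q_6^{i-h_i}p)$ becomes
\[ \sum_{k=1}^{h_i}\bigl( q^{k-1}\,t_i^{\delta_{k,1}} + q^{h_i-k}\,q^{2(k-1)}\,q^{2(i-h_i)}\,q \bigr). \]
The first summand, summed over $k$ from $1$ to $h_i$, splits off the $k=1$ term $t_i$ and leaves $q + q^2 + \cdots + q^{h_i-1} = q[h_i-1]_q$. For the second summand, the exponent of $q$ is $(h_i - k) + 2(k-1) + 2(i-h_i) + 1 = k + 2i - h_i - 1$, so summing over $k = 1,\dots,h_i$ gives $q^{2i-h_i}\sum_{k=1}^{h_i} q^{k-1} = q^{2i-h_i}[h_i]_q$. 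Hence the $i$-th factor is $t_i + q[h_i-1]_q + q^{2i-h_i}[h_i]_q$, and taking the product over $i = 1,\dots,n$ yields the claimed right-hand side (after renaming the product index to $k$, matching the statement).

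There is essentially no obstacle here: the only thing to be careful about is bookkeeping the exponent of $q$ in the blue term — in particular checking that the alignment count $i - h_i$ is indeed the exponent appearing in Theorem~\ref{thmsigned} for the $i$-th opener, and that the factor of $q$ from $p = q$ (the $\mathrm{b}(M)$ contribution) and the factor $q^2$ from $q_4 = q^2$ (the $\cc_{*b}$ contribution, since a blue edge that is the right edge of $w_i - 1 = k-1$ crossings contributes $2(k-1)$) are combined correctly. Once these exponents are verified, the geometric-series evaluations are routine and the corollary follows immediately.
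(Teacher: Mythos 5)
Your proof is correct and follows exactly the paper's route: the paper likewise obtains the corollary from Theorem~\ref{thmsigned} by the substitution $q_1=q_2=q$, $q_3=q_5=1$, $q_4=q_6=q^2$, $p=q$, and your exponent bookkeeping $(h_i-k)+2(k-1)+2(i-h_i)+1 = k+2i-h_i-1$ correctly yields the factor $t_i + q[h_i-1]_q + q^{2i-h_i}[h_i]_q$. The only difference is that you spell out the geometric-series evaluation that the paper leaves implicit.
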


\begin{proof} It follows from Theorem~\ref{thmsigned} by setting $q_1=q_2=q$, $q_4 = q_6 = q^2$, $q_3 = q_5 =1$, and $p=q$.
\end{proof}


\subsection{Sorting index and cycles for bicolored matchings}

For a vertex $v$ in a bicolored matching $M$, denote 
\begin{equation*}
\col( v, M) = \begin{cases}
0, & \text{ if the edge in $M$ incident with $v$ is red}\\
1,  & \text{ if the edge in $M$ incident with $v$ is blue}.
\end{cases}
\end{equation*}

Let $M_0$ be a matching with only red edges of type $D$. For $M \in \mathcal{M}_n^{(2)}(D)$ we define $\sor(M, M_0)$ similarly as for monochromatic matchings. Namely, let $o_1 < o_2 < \dots < o_n$ be the openers in $M$ (and consequently in $M_0$ as well). Define the sequence of bicolored matchings $$M = M_n, M_{n-1}, \dots,  M_2, M_1$$ as follows. Supose $M_n, \dots, M_k$ are defined for some $k \leq n$. Then, if $M_k(o_k) = M_0(o_k)$ the matching $M_{k-1} $ has the same edges as $M_k$ with colors
\begin{equation*} 
\col(o_i, M_{k-1}) = 
\begin{cases}
\col(o_i, M_{k}), & \text {if $i \neq k$}\\
0,& \text {if $i= k$}.
\end{cases} 
\end{equation*}
Otherwise, $M_{k-1}$ is the bicolored matching obtained by replacing the two edges $o_k \cdot M_k(o_k)$ and $ M_k(M_0(o_k)) \cdot  M_0(o_k)$ in the matching $M_k$ by the edges 
$o_k \cdot M_0(o_k)$ and $ M_k(M_0(o_k)) \cdot  M_k(o_k)$ and setting their colors to be
\begin{equation*} 
\col(o_i, M_{k-1}) = 
\begin{cases}
\col(o_i, M_{k}), & \text {if $o_i \neq o_k$ and $ o_i \neq M_k(M_0(o_k))$}\\
\col(o_i, M_{k}) + \col(o_k, M_{k}) (\text{mod }2),& \text {otherwise}.
\end{cases} 
\end{equation*}
In other words,  the sequence of matchings $M = M_n, M_{n-1}, \dots,  M_2, M_1$ is the one we get when we gradually sort the matching $M$ from right to left by connecting the openers to the closers as prescribed by $M_0$ and recoloring edges depending on the color of the edge which is currently being ``processed''. Note that $\col(o_k, M_i)=0$ for $i<k$ and so the final matching that we get after $n$ steps is indeed the desired $M_0$.  
 
The sorting index $\sor(M, M_0)$ is now defined in the following way. \\
\noindent If $\col(o_k, M_k) = 0$ define
\begin{equation*}
\sor_k(M, M_0) = \begin{cases}
|\{ c :  c > o_k, c \in [M_k(o_k), M_0(o_k)]  \text{ and } M_0(c) < o_k\}| , &\text { if }  M_k(o_k) \leq M_0(o_k)\\
 |\{ c :  c > o_k, c \notin (M_0(o_k), M_k(o_k))  \text{ and } M_0(c) < o_k\}|, &\text{ if }  M_0(o_k) < M_k(o_k) \end{cases}
 \end{equation*}
while if $\col(o_k, M_k) = 1$ define 
\begin{equation*}
\sor_k(M, M_0) = \begin{cases}
2k-1 - |\{ c :  c > o_k, c \in [M_k(o_k), M_0(o_k)], M_0(c) < o_k\}|, &\\
\hspace{ 10cm} \text { if }  M_k(o_k) \leq M_0(o_k) &\\
2k-1- |\{ c :  c > o_k, c \notin (M_0(o_k), M_k(o_k)) , M_0(c) < o_k\}|, &\\
 \hspace{ 10cm} \text{ if }  M_0(o_k) < M_k(o_k).
 \end{cases}
 \end{equation*}
Then $\sor(M, M_0)$ is defined as 
\begin{equation*} \sor(M, M_0) = \sum_{k=1}^n \sor_k(M, M_0). \end{equation*}
In particular, if $M$ has only red edges, $\sor(M, M_0)$ is equal to $\sor(M, M_0)$ defined for monochromatic matchings in Section~\ref{mat}.

Similarly to the monochromatic case, we define $\cyc(M, M_0)$ to  be the number of cycles in the graph $(M, M_0)$. However, in this case we can distinguish between two types of cycles: $\cyc_0(M, M_0)$ will denote the number of cycles with even number of blue edges, while $\cyc_1(M, M_0)$ will denote the number of cycles with odd number of blue edges. We will denote by $\Cyc_0(M, M_0)$ and $\Cyc_1(M, M_0)$ the indices of the minimal openers in the respective sets.

\begin{theorem}
 Let $M_0 \in \mathcal{M}_n^{(2)}(D)$ be a  matching with all red edges of type $D$ and let  $(h_1, \dots, h_n)$ be the height sequence of $D$. There is a bijection which depends on $M_0$
 
 \[ \phi_2: \{((w_1, \dots, w_n), (\epsilon_1, \dots, \epsilon_n)): w_i \in \mathbb{Z}, 1 \leq w_i \leq h_i, \epsilon_i \in \{0,1\}\} \rightarrow M_n^{(2)}(D)\] such that the matching $M = \phi_2((w_1, \dots, w_n), (\epsilon_1, \dots, \epsilon_n))$ has the following properties:
 \begin{equation} \label{sorthmb} \sor(M, M_0) =\sum_{k=1}^n (w_k + \epsilon_k (2k-h_k) -1), \end{equation}  and 
 \begin{equation} \cyc(M, M_0) = |\{ k: w_k= 1+ \epsilon_k (h_k  -1)\}|. \end{equation}
 Moreover, $\Cyc_0(M, M_0)  = \{k: (w_k, \epsilon_k) = (1,0)\}$ and $\Cyc_1(M, M_0) = \{k: (w_k, \epsilon_k) = (h_k, 1) \}$.\end{theorem}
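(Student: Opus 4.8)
The plan is to build $\phi_2$ as the natural bicolored analogue of the map $\phi_1$ from Theorem~\ref{thmcycles}, by letting the color data $(\epsilon_1,\dots,\epsilon_n)$ act as a ``reversal'' switch on the candidate list and on the recorded sorting contribution. Concretely: draw $M_0$ in the lower half-plane with openers $o_1 < \cdots < o_n$ and connect the openers of $M$ from right to left exactly as in the proof of Theorem~\ref{thmcycles}, but with the following modification governed by $\epsilon_k$. When connecting $o_k$, consider the $h_k$ candidates $c$ with $c > o_k$ and $M_0(c) \le o_k$, listed starting from $M_0(o_k)$ and going cyclically to the left. If $\epsilon_k = 0$, color the edge from $o_k$ red and pick the $w_k$-th candidate on this list, following a path in the partial graph to a free closer $c^*$ if the chosen candidate is already occupied, exactly as before. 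If $\epsilon_k = 1$, color the edge blue and pick the $(h_k - w_k + 1)$-st candidate on the list (again routing along a path to a free $c^*$ if necessary), which is the mirror image of the red rule and matches the blue convention already used in $\varphi_2$ in the proof of Theorem~\ref{thmsigned}. Colors of the displaced edges are updated according to the mod-$2$ recoloring rule in the definition of the sorting sequence.

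The first step is to verify this is a bijection: the inverse reads off $\epsilon_k = \col(o_k, M_k)$ and then $w_k$ from $\sor_k(M, M_0)$ via the two displayed formulas (solving for $w_k$ in the red case as $w_k = \sor_k(M,M_0)+1$ and in the blue case as $w_k = 2k - \sor_k(M,M_0) - 1 - (2k - h_k) = h_k - (\sor_k(M,M_0) - (2k-h_k)+ \dots)$, i.e. the formula is invertible because $\sor_k$ is an affine function of $w_k$ with unit coefficient). The key identity $M_k(o_k) = c_{k_0}$, where $c_{k_0}$ is the chosen $w_k$-th (resp.\ $(h_k-w_k+1)$-st) candidate, is proved exactly as in Theorem~\ref{thmcycles}: sorting $M$ to $M_0$ undoes the path-following step. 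Granting this, in the red case $\sor_k(M,M_0) = w_k - 1 = w_k + \epsilon_k(2k - h_k) - 1$; in the blue case, one checks that the quantity $|\{c : c > o_k, \dots, M_0(c) < o_k\}|$ appearing in the blue formula equals $w_k - 1$ computed against the \emph{reversed} list, so that $\sor_k(M, M_0) = 2k - 1 - (w_k - 1) = 2k - w_k$, wait — recomputing: with the blue rule the displacement against the mirrored list is $h_k - w_k$, giving $\sor_k = 2k - 1 - (h_k - w_k) = w_k + (2k - h_k) - 1 = w_k + \epsilon_k(2k-h_k) - 1$. Summing over $k$ gives~\eqref{sorthmb}.

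For the cycle statement, I would rerun the two-case cycle analysis from the proof of Theorem~\ref{thmcycles}. A cycle closes at step $k$ precisely when the chosen candidate $c_{k_0}$ equals $M_0(o_k)$, i.e.\ when it is the \emph{first} entry of the cyclic candidate list: in the red case this is $w_k = 1$, in the blue case this is $h_k - w_k + 1 = 1$, i.e.\ $w_k = h_k$; together these are exactly $w_k = 1 + \epsilon_k(h_k - 1)$, giving the formula for $\cyc(M, M_0)$. To split into $\Cyc_0$ and $\Cyc_1$, I track the parity of blue edges in the cycle closed at step $k$: by the recoloring rule, the parity of the number of blue edges along the traced path plus the color of the final edge $o_k \cdot c^*$ is an invariant, and when a cycle closes with a red edge ($\epsilon_k = 0$, $w_k = 1$) the whole cycle was assembled from red edges so it has an even (namely zero) number of blue edges, landing $k$ in $\Cyc_0$; when it closes with a blue edge ($\epsilon_k = 1$, $w_k = h_k$), the analysis of how blue edges accumulate along the alternating path shows the total is odd, landing $k$ in $\Cyc_1$. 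Since $o_k$ is the minimal vertex of the cycle it closes (all other vertices were drawn to the right of $o_k$), the index sets come out as claimed: $\Cyc_0(M, M_0) = \{k : (w_k, \epsilon_k) = (1,0)\}$ and $\Cyc_1(M, M_0) = \{k : (w_k, \epsilon_k) = (h_k, 1)\}$.

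The main obstacle I anticipate is the bookkeeping of colors along the path-following step — showing that the mod-$2$ recoloring in the definition of the sorting sequence is consistent with the color assigned when $o_k$ first connects to $c^*$, and in particular that the parity of blue edges in each completed cycle is correctly predicted by $\epsilon_k$ alone at the moment the cycle closes. This requires a careful induction on the partial matchings $N_k$, tracking for each connected component of $(N_k, M_0)$ the parity of its blue edges; everything else is a routine transcription of the monochromatic argument.
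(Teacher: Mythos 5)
Your proposal is correct and takes essentially the same route as the paper: the same candidate-list construction with $\epsilon_k$ reversing the selection order, the same identification $M_k(o_k)=c_{k_0}$ with $\col(o_k,M_k)=\epsilon_k$, the same count giving $\sor_k = w_k+\epsilon_k(2k-h_k)-1$, and the same cycle-closing and blue-parity analysis. The color-consistency induction you flag as the remaining obstacle is exactly the step the paper handles via an auxiliary graph recording which openers' colors get toggled at each swap.
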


\begin{proof} We construct $\phi_2$  in the following way. Draw the matching $M_0$ with red arcs in the lower half-plane. Let $o_1< \dots < o_n$ be the vertices that are to be openers in $M=\varphi((w_1, \dots, w_n), (\epsilon_1, \dots, \epsilon_n))$ as determined by the type $D$. We draw arcs in the upper half plane by connecting the openers from right to left to closers as follows. 

Suppose that the openers $o_n, o_{n-1}, \dots, o_{k+1}$ are already connected to a closer and denote the partial matching in the upper half-plane by $N_k$. In particular, $N_n$ is the empty matching. To connect $o_k$, we consider all  the closers $c$ with the property $c > o_k$ and $M_0(c) \leq o_k$. Note that there are exactly $h_k $ such closers, call them candidates for $o_k$.

 If $\epsilon_k=0$, let $c_{k_0}$ be the closer which is $w_k $-th on the list when all those $h_k$ candidates are listed starting from $M_0(o_k)$ and then going cyclically to the left. Otherwise, let  $c_{k_0}$ be the closer which is $(h_k-w_k +1)$-st on that list. If $c_{k_0}$ is not connected to an opener by an arc in the upper half-plane, draw the arc $o_k \cdot c_{k_0}$ with color $\col(o_k, N_{k-1})=\epsilon_k$. Otherwise, there is a maximal path in the graph of the type \begin{equation} \label{path} c_{k_0}, N_k(c_{k_0}), M_0(N_k(c_{k_0})), N_k(M_0(N_k(c_{k_0}))), \dots, c^* \end{equation} which starts with $c_{k_0}$, follows arcs in $N_k$ and $M_0$ alternately and ends with a closer $c^*$ which has not been connected to an opener yet. Note that, due to the order in which the arcs in the upper half-plane are drawn, all vertices in the aforementioned path are to the right of $o_k$. In particular, $c^*$ is to the right of $o_k$ and is not one of the candidates for $o_k$. Draw an arc in the upper half-plane connecting $o_k$ to $c^*$ and set its color to be
 \begin{equation} \label{bicolcyc} \col(o_k, N_{k-1}) =  \epsilon_k +  \text{the sum of the colors of the edges in that path } (\text{mod } 2).\end{equation} 
When all the openers are connected in this manner, the resulting matching in the upper half-plane is $M=\phi_2((w_1, \dots, w_n), (\epsilon_1, \dots \epsilon_n))$.

Let $M_n =M, M_{n-1}, \dots, M_2, M_1$ be the sequence of intermediary matchings constructed when $M$ is sorted to $M_0$. Then $M_k(o_k)$ is exactly the closer $c_{k_0}$ defined above and we claim that $\col(o_k, M_k) =\epsilon_k$. To prove this claim, consider the following graph $G$ which represents the effects of color changing of arcs. The vertices of $G$ are $\{1, 2, \dots, n\}$.  Two vertices $l <k$ are connected by an edge if the $M_k(M_0(o_k)))= o_l$. This means that $$\col(o_l, M_{k-1}) = \col(o_l, M_k) + \col(o_k, M_k).$$ Let $G_k$ be the induced subgraph of $G$ on the vertices $\{k, k+1, \dots, n\}$. One can prove by induction that when connecting the opener $o_k$ in the construction of $\phi_2$, the arcs that are traced in the path~\eqref{path} are exactly the ones that have openers that are in the connected component of $k$ in $G_k$ and $\col(o_k, M_k) =\epsilon_k$.  

Now that we know how the sequence of matchings $M_n =M, M_{n-1}, \dots, M_2, M_1$ relates to $(w_1, \dots, w_n), (\epsilon_1, \dots \epsilon_n)$, it is not difficult to find $\sor_k(M, M_0)$. Suppose first that $M_k(o_k) \leq M_0(o_k)$ and consider the closers in the set $$ \{ c :  c > o_k, c \in [M_k(o_k), M_0(o_k)]  \text{ and } M_0(c) \leq o_k\}.$$ If  $\epsilon_k=0$, the elements in this set are exactly the first $w_k$ candidates for $o_k$ in the construction of $M$. If $\epsilon_k=1$, this set contains the first $h_k - w_k +1$ candidates for $o_k$. So, in this case \begin{equation} \label{rtf}  \sor_k(M, M_0) = \begin{cases} w_k-1,\; &\text{ if } \epsilon_k=0 \\
2k-1-h_k+w_k, \; &\text{ if } \epsilon_k=1. \end{cases} \end{equation}The case $M_0(o_k) < M_k(o_k)$ is similar and we get again~\eqref{rtf}. This proves~\eqref{sorthmb}.

We note that the inverse map $\phi_2^{-1}$ is not difficult to construct. To recover $w_k$ and $\epsilon_k$ that correspond to a given matching $M$, sort $M$ to $M_0$. If  $M_n=M, M_{n-1}, \dots, M_1$ is the  sequence of intermediary matchings obtained in the process, set $\epsilon_k = \col(o_k, M_k)$ and $$w_k = \sor_k(M, M_0) +1 - \col(o_k, M_k) (2k - h_k ).$$

Similarly as in the case of monochromatic matchings,  a cycle in the graph $(M, M_0)$ is closed exactly when $c_{k_0} = M_0(o_k)$, which means when $w_k = 1$ and $\epsilon_k=0$ or when $ w_k = h_k$ and $\epsilon_k = 1$. This proves the second property of $\phi_2$. Moreover, it follows from~\eqref{bicolcyc} that if the edge $o_k \cdot M(o_k)$ is the edge with the smallest opener in its own cycle (i.e. the edge that closes a cycle when $\phi_2((w_1, \dots, w_n), (\epsilon_1, \dots, \epsilon_n))$ is constructed),  then
\[\epsilon_k =  \text{the sum of the colors of the edges in that cycle} (\mathrm{mod} 2). \] Hence the minimal openers of the cycles with even number of blue edges correspond to the pairs $(1,0)$ among $(w_1, \epsilon_1), (w_2, \epsilon_2), \dots, (w_n, \epsilon_n)$ while the minimal openers of the cycles with odd number of blue edges correspond to the  pairs $(w_k, \epsilon_k) = (h_k, 1)$  in that list.
\end{proof}
From the properties of the map $\phi_2$, we get the following generating function.
\begin{corollary} Let $(h_1, \dots, h_n)$ be the height sequence of  the Dyck path $D$ and let $M_0 \in \mathcal{M}_n^{(2)}(D)$ be a matching with all red edges.  Then

\begin{equation} \sum_{M \in \mathcal{M}_{n}^{(2)}(D)} q^{\sor(M, M_0)}  \prod_{i \in \Cyc_0(M, M_0)}t_i \prod_{i \in \Cyc_1(M, M_0)}s_i  =   \prod_{k=1}^n \left(t_k + (q +q^{2k-h_k})[h_k-1]_q+ s_kq^{2k-1} \right) \end{equation}

 \end{corollary}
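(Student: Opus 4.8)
The plan is to derive this generating function directly from the bijection $\phi_2$ established in the preceding theorem, exactly in the way Corollary~\ref{cora} was obtained from Theorem~\ref{thmcycles} in the monochromatic case. Since $\phi_2$ is a bijection from the set of pairs $\{((w_1,\dots,w_n),(\epsilon_1,\dots,\epsilon_n)): 1\leq w_i\leq h_i,\ \epsilon_i\in\{0,1\}\}$ onto $\mathcal{M}_n^{(2)}(D)$, I would rewrite the sum on the left-hand side as a sum over all such pairs, and then translate each of the three statistics in the summand into the corresponding expression in the $w_i$'s and $\epsilon_i$'s using the three properties of $\phi_2$ already proved.

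Concretely: first, by~\eqref{sorthmb}, $q^{\sor(M,M_0)} = \prod_{k=1}^n q^{w_k+\epsilon_k(2k-h_k)-1}$. Second, by the last assertion of the theorem, $\Cyc_0(M,M_0) = \{k: (w_k,\epsilon_k)=(1,0)\}$ and $\Cyc_1(M,M_0) = \{k: (w_k,\epsilon_k)=(h_k,1)\}$, so the product $\prod_{i\in\Cyc_0}t_i\prod_{i\in\Cyc_1}s_i$ factors over $k$ as well: the $k$-th factor picks up $t_k$ precisely when $(w_k,\epsilon_k)=(1,0)$ and picks up $s_k$ precisely when $(w_k,\epsilon_k)=(h_k,1)$. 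Because every statistic in the summand factors as a product of index-$k$ contributions, and because the constraint on $(w_k,\epsilon_k)$ is independent across $k$, the whole sum factors as $\prod_{k=1}^n\big(\sum_{\epsilon_k\in\{0,1\}}\sum_{w_k=1}^{h_k}(\text{contribution})\big)$.

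It then remains to evaluate the $k$-th inner sum. For $\epsilon_k=0$ the contribution is $q^{w_k-1}$ with an extra factor $t_k$ when $w_k=1$; summing over $w_k=1,\dots,h_k$ gives $t_k + q + q^2 + \cdots + q^{h_k-1} = t_k + q[h_k-1]_q$. For $\epsilon_k=1$ the contribution is $q^{w_k+2k-h_k-1}$ with an extra factor $s_k$ when $w_k=h_k$; summing over $w_k=1,\dots,h_k$ gives $q^{2k-h_k}(q^0+q^1+\cdots+q^{h_k-2}) + s_k q^{2k-h_k}\cdot q^{h_k-1} = q^{2k-h_k}[h_k-1]_q + s_k q^{2k-1}$. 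Adding the two cases yields the $k$-th factor $t_k + (q+q^{2k-h_k})[h_k-1]_q + s_k q^{2k-1}$, which matches the claimed product.

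I expect no serious obstacle here — this corollary is a purely bookkeeping consequence of $\phi_2$, just as Corollary~\ref{cora} followed from Theorem~\ref{thmcycles}. The only mild point of care is handling the boundary of the $w_k$-range correctly: when $h_k=1$ the sum over $w_k$ is a single term, and one should check that the formulas $q[h_k-1]_q$ and $s_kq^{2k-1}$ still produce the right value (with $[0]_q=0$), so that the $\epsilon_k=0$ factor is just $t_k$ and the $\epsilon_k=1$ factor is just $s_kq^{2k-1}$, as it should be since a height-one rise forces $w_k=1=h_k$. A parallel remark applies to confirming that the $q$-exponent $2k-h_k-1$ is nonnegative, which holds because $h_k\leq k$ always, so all powers of $q$ appearing are genuine monomials. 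Once these trivialities are noted, summing the geometric series and combining the two color cases completes the proof.
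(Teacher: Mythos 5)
Your proof is correct and is exactly the argument the paper intends: the corollary is stated there with only the remark ``From the properties of the map $\phi_2$, we get the following generating function,'' and your factorization over $k$ together with the two geometric sums for $\epsilon_k=0$ and $\epsilon_k=1$ is precisely the omitted bookkeeping. The boundary checks you mention ($h_k=1$ and positivity of the exponents via $h_k\leq k$) are fine and do not affect the result.
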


 \begin{corollary} Let $(h_1, \dots, h_n)$ be the height sequence of  the Dyck path $D$ and let $M_0 \in \mathcal{M}_n^{(2)}(D)$ be a matching with all red edges.  Then
 \[ \sum_{M \in \mathcal{M}_{n}^{(2)}(D)} q^{\sor(M, M_0)}  \prod_{i \in \Cyc_0(M, M_0)}t_i = \sum_{M \in \mathcal{M}_{n}^{(2)}(D)} q^{\mathrm{mix}(M)}  \prod_{i \in \mathrm{Longr}(M)}t_i .\]
 \end{corollary}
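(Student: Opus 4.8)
The plan is to obtain the identity by specializing the generating function from the preceding corollary and comparing it, factor by factor, with the one in Corollary~\ref{corb}. First I would set $s_k = 1$ for every $k$ in the preceding corollary. Then $\prod_{i \in \Cyc_1(M,M_0)} 1 = 1$, so its left-hand side becomes exactly $\sum_{M \in \mathcal{M}_n^{(2)}(D)} q^{\sor(M,M_0)} \prod_{i \in \Cyc_0(M,M_0)} t_i$, while its right-hand side becomes $\prod_{k=1}^n \bigl( t_k + (q + q^{2k-h_k})[h_k-1]_q + q^{2k-1} \bigr)$.

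Next I would rewrite the $k$-th factor $t_k + q[h_k-1]_q + q^{2k-h_k}[h_k]_q$ appearing in Corollary~\ref{corb} using the elementary $q$-integer identity $[h_k]_q = [h_k-1]_q + q^{h_k-1}$. This gives $t_k + q[h_k-1]_q + q^{2k-h_k}[h_k-1]_q + q^{2k-h_k}q^{h_k-1} = t_k + (q + q^{2k-h_k})[h_k-1]_q + q^{2k-1}$, which is precisely the $k$-th factor produced by the specialization above. Hence the two products coincide, and therefore so do the two generating functions; the claimed equality follows.

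I do not expect a genuine obstacle here: every step is either a one-line specialization or an application of a standard identity, and all the combinatorial content has already been absorbed into Theorem~\ref{thmsigned} (via the bijection $\varphi_2$) and into the bijection $\phi_2$. The corollary simply records the resulting equidistribution: the pair $(\sor(\,\cdot\,, M_0), \Cyc_0(\,\cdot\,, M_0))$ is equidistributed with $(\mathrm{mix}, \mathrm{Longr})$ on $\mathcal{M}_n^{(2)}(D)$, for any all-red $M_0$ of type $D$. If one wished to keep the statement set-valued one could instead compare the two products with the $t_i$'s left in place, bypassing the $s_k = 1$ specialization, but for the stated corollary this is not needed.
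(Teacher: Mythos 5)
Your proposal is correct and matches the paper's (implicit) argument exactly: the corollary is stated without proof precisely because it follows from setting $s_k=1$ in the preceding generating function and observing, via $[h_k]_q=[h_k-1]_q+q^{h_k-1}$, that the resulting product coincides with the one in Corollary~\ref{corb}. Nothing further is needed.
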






\subsection{Connections with restricted signed permutations}


Petersen~\cite{Petersen} defined a sorting index for signed permutations. Every signed permutation $\sigma \in B_n$ can be uniquely written as a product $$ \sigma = (i_1 j_1)(i_2 j_2)\cdots (i_k j_k)$$ of transpositions such that $i_s < j_s$ for $1 \leq s \leq k$ and $0 < j_1 < \cdots < j_k$. Here the transposition $(i j)$ means to swap both $i$ with $j$ and $\bar{i}$ with  $\bar{j}$ (provided $ i \neq \bar{j}$). The type $B_n$ sorting index  is defined to be
\[ \sor(\sigma) = \sum_{s=1}^k (j_s - i_s - \chi(i_s < 0)).\] 

As before, the sorting index keeps track of the total distance the elements in $\sigma$ move when $\sigma$ is sorted using a ``type $B$'' Straight Selection Sort algorithm in which, using a transposition,  the largest number is moved to its proper place, then the second largest, and so on. For example, the steps for sorting $\sigma = \bar{5} 1 3 \bar{4} \bar{2}$ are
\[ 24\bar{3}\bar{1}{\bf{5}} \; \bar{5} 1 3 \bar{4} \bar{2} \xrightarrow{(\bar{1}{5})} \bar{5}{\bf{4}}\bar{3}\bar{1} \bar{2} \; 2 1 3 \bar{4} 5  \xrightarrow{(\bar{4} 4)}   \bar{5} \bar{4}  \bar{3}\bar{1} \bar{2} \; {\bf{2}} 1 3 4 5  \xrightarrow{(1 2)} \bar{5} \bar{4}  \bar{3}\bar{2} \bar{1} \; 1 2 3 4 5  \]
and therefore $ \sigma  =  (1 2)  (\bar{4} 4)(\bar{1}{5})$ and
$\sor(\sigma) = (2-1) + (4-(-4)-1) + (5-(-1) -1) = 13$.

Let ${\bf{r}}$ be a nondecreasing sequence of positive integers $r_1 \leq r_2 \leq \cdots \leq r_n \leq n$ and let
$$B_{\bf{r}} = \{ \sigma \in B_n : \sigma(i) \leq r_i\}.$$ As before, only sequences  ${\bf{r}}$ for which $r_i \geq i$ are of interest as otherwise the set $B_{\bf{r}}$ is empty. There is a canonical bijection $$g_{\bf{r}}: B_{\bf{r}} \rightarrow \mathcal{M}^{(2)}_n(D({\bf{r})}) $$ that takes a permutation $\sigma \in B_{\bf{r}}$ to the matching with edges $o_{|\sigma(k)|} \cdot c_k$ colored red if $\sigma(k) > 0$ and colored blue if $\sigma(k) < 0$, for $k \in \{1, 2, \dots, n\}$.

As in the monochromatic case, for each $ \sigma_0 \in B_{\bf{r}}$ with $\sigma(k) > 0$ for $k >0$, this map  induces a statistic on $B_{\bf{r}}$, which we will denote $\sor_{\bf{r}}(\; \cdot \;,\sigma_0)$:
$$\sor_{\bf{r}}(\sigma, \sigma_0) = \sor(g_{\bf{r}}^{-1}(\sigma), g_{\bf{r}}^{-1}(\sigma_0) ).$$

The direct definition of  $\sor_{\bf{r}}(\sigma, \sigma_0)$ on $B_{\bf{r}}$  via a sorting algorithm similar to Straight Selection Sort is as follows. Permute the elements in $\sigma \in B_{\bf{r}}$ by applying transpositions which place the largest element $n$ in position $\sigma_0^{-1}(n)$, then the element $n-1$ in position $\sigma_0^{-1}(n-1)$, etc. Let $\sigma = \sigma_n, \sigma_{n-1}, \dots, \sigma_1$, be the sequence of intermediary permutations obtained in this way. In particular, $\sigma_k^{-1}(i) = \sigma_0^{-1}(i)$ for $|i| >k$, and $\sigma_{k-1}$ is obtained by swapping $k$ and $\sigma_k(\sigma_0^{-1}(k))$ in $\sigma_{k}$. 
 
 Let $l = \sigma_k^{-1}(k)$ and $m = \sigma_0^{-1}(k) >0$.  Define
 
 \begin{equation} \label{bkdef} b_k = \begin{cases} 0, &  l=m \\
  |\{ i :  l \leq i \leq m, \sigma_0(i) < k \}|, &0< l<m \\
|\{i: r_i \geq k, i \notin (m,l), \sigma_0(i) <k\}|, & l > m \\
 2k-1, & l=-m\\
 2k-1 -  |\{ i :  l \leq i \leq m, \sigma_0(i) < k \}|, & 0 > l > -m \\
2k-1 - |\{i: r_i \geq k, i \notin (m,l), \sigma_0(i) <k\}|, & l < -m. \end{cases} \end{equation} Then $$\sor_{\bf{r}}(\sigma, \sigma_0) = \sum_{k=1}^n b_k.$$

As before, $\sor_{\bf{r}}(\sigma, \sigma_0)$ in general depends not only on $\sigma_0$, but on $\bf{r}$ as well. However, the case when $\sigma_0$ is the identity permutation is an exception. 

 \begin{lemma} \label{permlemma} Let $\bf{r}$  be a non-decreasing sequence of integers $1 \leq r_1 \leq r_2 \leq \cdots \leq r_n \leq n$ with $r_k \geq k$, for all $k$. Let $\sigma \in B_{\bf{r}}$.  Then $$\sor_{\bf{r}}(\sigma, \text{\bf{id}}) = \sor(\sigma).$$
 \end{lemma}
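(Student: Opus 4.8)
The plan is to reduce the claim $\sor_{\bf{r}}(\sigma, \text{\bf{id}}) = \sor(\sigma)$ to a term-by-term comparison of the summands $b_k$ in~\eqref{bkdef} against the contributions to Petersen's type $B_n$ sorting index, exactly paralleling the argument of Lemma~\ref{permlemma1}. First I would observe that when $\sigma_0 = \text{\bf{id}}$ we have $m = \sigma_0^{-1}(k) = k$, so in~\eqref{bkdef} the quantity $\sigma_0(i) = i < k$ holds precisely for $i \in \{1, \dots, k-1\}$, and the sets $\{i : r_i \geq k\}$ are irrelevant because the cases $l > m$ and $l < -m$ cannot occur. Indeed, arguing as in Lemma~\ref{permlemma1}: if $|l| > k$ then, since $\sigma_k$ agrees with $\text{\bf{id}}$ on all positions holding values of absolute value exceeding $k$, position $l$ (or $-l$) already holds the value $l$ (resp.\ $-l$), contradicting $l = \sigma_k^{-1}(k)$ with $k \leq |l|$ being strictly smaller. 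Wait---more carefully, $l = \sigma_k^{-1}(k)$ means position $l$ holds the value $k$; if $|l| > k$ then $|l|$ is a value of absolute value $> k$ sitting in a ``wrong'' place, which is impossible after the larger elements have been sorted. So only the three cases $l = k$, $0 < l < k$, and $0 > l > -k$ survive.

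With those reductions, the summand becomes $b_k = 0$ when $l = k$, $b_k = |\{i : l \leq i \leq k, \ i < k\}| = k - l$ when $0 < l < k$, and $b_k = 2k - 1 - |\{i : l \leq i \leq k, \ i < k\}|$ when $0 > l > -k$; in the last case the interval $l \leq i \leq k$ with $i < k$ contributes $i \in \{l, l+1, \dots, -1\} \cup \{1, \dots, k-1\}$, which has $(-l-1) + (k-1) = k - l - 2$... I need to recount: the positions $i$ range over integers from $l$ (negative) up to $k$, and we count those with $\sigma_0(i) = i < k$, i.e.\ all of them except $i = k$, but also $i$ must be a legitimate position, i.e.\ $i \neq 0$; so the count is $|\{l, l+1, \dots, k\} \setminus \{0, k\}| = (k - l + 1) - 2 = k - l - 1$, giving $b_k = 2k - 1 - (k - l - 1) = k + l$. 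Then I would check against Petersen's definition: sorting $\sigma$ by the type $B$ Straight Selection Sort moves the value $k$ from position $\sigma_k^{-1}(k) = l$ to position $k$, via a transposition $(i_s\, j_s)$ with $j_s = k$ and $i_s = l$, contributing $j_s - i_s - \chi(i_s < 0) = k - l - \chi(l < 0)$ to $\sor(\sigma)$; this is $k - l$ when $l > 0$, $0$ when $l = k$, and $k - l - 1 = k + |l| - 1$... hmm, that disagrees with $k + l = k - |l|$. I must be misreading the sign convention in~\eqref{bkdef}, so the main work will be getting these bookkeeping signs to line up.

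The key step, and the main obstacle, is therefore a careful case analysis verifying that for each $k$, the locally defined quantity $b_k$ in~\eqref{bkdef} equals the contribution $j_s - i_s - \chi(i_s < 0)$ of the transposition placing $k$ in its correct position during the type $B$ Straight Selection Sort of $\sigma$ (when such a transposition is nontrivial), and $b_k = 0$ otherwise. This requires tracking how negative positions are indexed and how the blue-edge ``$2k-1$ complement'' in the matching picture translates back to the permutation picture; I expect the resolution to be that in the $0 > l > -k$ case one should count positions in the complementary range, matching the ``$c \notin (M_0(o_k), M_k(o_k))$'' clause of the matching definition of $\sor_k$, which upon unwinding gives exactly $j_s - i_s - \chi(i_s<0)$. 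Once that identity is established termwise, summing over $k$ yields $\sor_{\bf{r}}(\sigma, \text{\bf{id}}) = \sum_k b_k = \sor(\sigma)$, and since $\text{\bf{id}} \in B_{\bf{r}}$ for every admissible $\bf{r}$, the statement follows; the independence from $\bf{r}$ is automatic because the surviving cases of~\eqref{bkdef} never invoke the numbers $r_i$.
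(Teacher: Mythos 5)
Your overall strategy is the paper's own: rule out the cases $|l|>m$ of~\eqref{bkdef} exactly as in Lemma~\ref{permlemma1}, then check term by term that $b_k$ equals the contribution $j_s-i_s-\chi(i_s<0)$ of the transposition that places $k$ in position $k$. The reduction to the three surviving cases $l=k$, $0<l<k$, $0>l\ge -k$ is correct, as is the verification for $l=k$ and $0<l<k$. But the case that actually distinguishes type $B$ from type $A$, namely $0>l>-k$, is left open: your two attempted counts give $b_k=k-l-2$ and then $b_k=k+l$, neither of which equals Petersen's contribution $k-l-1$, and you explicitly defer the reconciliation (``the main work will be getting these bookkeeping signs to line up''). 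As written this is a gap, not a proof.

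The resolution is forced by the definition $\sor_{\bf{r}}(\sigma,\sigma_0)=\sor(g_{\bf{r}}^{-1}(\sigma),g_{\bf{r}}^{-1}(\sigma_0))$, of which~\eqref{bkdef} is the translation. Under $g_{\bf{r}}$ the value $k$ sitting negated at position $|l|$ corresponds to the blue edge $o_k\cdot c_{|l|}$, so the interval $[M_k(o_k),M_0(o_k)]$ in the definition of $\sor_k$ is $[c_{|l|},c_m]$, and the index set written $\{i: l\le i\le m\}$ in~\eqref{bkdef} must be read as the positive positions $i$ with $|l|\le i\le m$ (the condition $r_i\ge k$, present in the matching definition as $c>o_k$, is automatic there because $r_{|l|}\ge|\sigma_k(|l|)|=k$ and $\bf{r}$ is nondecreasing). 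With $\sigma_0=\text{\bf{id}}$ and $m=k$ this set is $\{|l|,\dots,k-1\}$, of cardinality $k-|l|$, so $b_k=2k-1-(k-|l|)=k+|l|-1=k-l-\chi(l<0)$, which is exactly Petersen's contribution; the boundary case $l=-k$ gives $b_k=2k-1=k-(-k)-1$ as well. (A sanity check on the paper's example $\sigma=\bar 5 1 3\bar 4\bar 2$, $k=5$, $l=-1$, confirms $b_5=9-4=5$.) Once this indexing convention is pinned down the termwise identity holds in every case and the lemma follows by summing over $k$; that verification is precisely the content of the paper's (equally terse) closing assertion that the simplified $b_k$ is ``the distance $k$ travels,'' so you need to supply it rather than conjecture it.
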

 \begin{proof}  The case $|l| > m$ in~\eqref{bkdef} cannot occur. Namely, in the case when $\sigma_0 = \text{\bf{id}}$, we have $m=k$ and if $|l|>k$, $\sigma_k^{-1}(l) = \sigma_0^{-1}(l) = l.$ This contradicts $l = \sigma_k^{-1}(k)$. Therefore, the definition of $b_k$ simplifies to
\[ b_k = \begin{cases} | \{ i : l \leq i < k \}|, & 0< l \leq k \\ 2k-1- | \{ i : l \leq i < k \}|, & 0 > l > -k. \end{cases} \] 
This is precisely the ``distance'' that $k$ travels when being placed in its correct position with the sorting algorithm.
  \end{proof}

Signed permutations can be decomposed into two types of cycles. The cycles can be of the form $(a_1, \dots, a_k)$ (this cycle also takes $\bar{a_1}$ to $\bar{a_2}$, etc.) or of the form $(a_1, \dots, a_k, \bar{a_1}, \dots, \bar{a_k})$, for $k \geq 1$ and all $a_1, \dots, a_k$ different. The former cycles are called balanced and the letter ones unbalanced. Let 
$$\Cyc_0(\sigma) =\{|k| : k \text{ is the minimal number in absolute value in a balanced cycle of } \sigma\},$$ 
$$\Cyc_1(\sigma) =\{|k| : k \text{ is the minimal number in absolute value in a unbalanced cycle of } \sigma\},$$
and let $\cyc_0(\sigma)=|\Cyc_0(\sigma)|$ and $\cyc_1(\sigma)=|\Cyc_1(\sigma)|$. For example, the permutation $\sigma = \bar{3} \bar{9} \bar{5} \bar{7} 1 \bar{6} \bar{4} \bar{8} 2$ can be decomposed into 
$\sigma = (1 \bar{3} 5) ( 2 \bar{9} \bar{2} 9) (4 \bar{7}) (6 \bar{6}) (8)$, so $\Cyc_0(\sigma) = \{1, 4, 8\}$ and $\Cyc_1(\sigma) =\{ 2, 6\}$.

\begin{corollary} \label{coradd} Let $ \sigma_0 \in B_{\bf{r}}$ with $\sigma(k) > 0$ for $k >0$. Then
\begin{equation} \label{oddeven} \sum_{\sigma \in B_{\bf{r}}} q^{\sor_{\bf{r}}(\sigma,\sigma_0)}  \prod_{i \in \Cyc_0(\sigma \sigma_0^{-1})}t_i \prod_{i \in \Cyc_1(\sigma \sigma_0^{-1})}s_i  =   \prod_{k=1}^n \left(t_i + (q+ q^{2k-h_k})[h_k-1]_q+ s_iq^{2k-1} \right). \end{equation}

\end{corollary}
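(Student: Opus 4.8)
The plan is to transfer the identity of the preceding Corollary (the one giving $\sum_{M} q^{\sor(M,M_0)} \prod_{i \in \Cyc_0(M,M_0)} t_i \prod_{i \in \Cyc_1(M,M_0)} s_i = \prod_{k=1}^n (t_k + (q+q^{2k-h_k})[h_k-1]_q + s_k q^{2k-1})$) through the bijection $g_{\bf{r}}: B_{\bf{r}} \to \mathcal{M}^{(2)}_n(D({\bf{r}}))$. Set $M = g_{\bf{r}}^{-1}(\sigma)$ and $M_0 = g_{\bf{r}}^{-1}(\sigma_0)$; since $\sigma_0(k) > 0$ for all $k > 0$, the matching $M_0$ has all red edges, so the preceding Corollary applies with this $M_0$. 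By the definition of $\sor_{\bf{r}}(\sigma,\sigma_0)$ we already have $\sor_{\bf{r}}(\sigma,\sigma_0) = \sor(M,M_0)$, so the $q$-exponents match termwise. The height sequence $(h_1,\dots,h_n)$ of $D({\bf{r}})$ enters exactly as on the right-hand side, so the product side is already in the desired form. It remains only to identify the cycle-index sets.

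The key step is therefore to show $\Cyc_0(\sigma\sigma_0^{-1}) = \Cyc_0(M,M_0)$ and $\Cyc_1(\sigma\sigma_0^{-1}) = \Cyc_1(M,M_0)$ as subsets of $[n]$. I would argue exactly as in the monochromatic case treated after Corollary~\ref{cora}: a cycle $k \to \sigma_0\sigma^{-1}(k) \to \cdots \to k$ of the permutation $\sigma_0\sigma^{-1}$ corresponds to the alternating cycle $o_k \to M(o_k) \to M_0(M(o_k)) \to \cdots \to o_k$ in the graph $(M,M_0)$, so $\Cyc(\sigma_0\sigma^{-1}) = \Cyc(M,M_0)$, and the cycles of $\sigma\sigma_0^{-1}$ are the reverses of those of $\sigma_0\sigma^{-1}$, hence have the same minimal-absolute-value elements. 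The new point is the bookkeeping of \emph{balanced versus unbalanced}: following the alternating cycle in $(M,M_0)$, each traversal of a blue (i.e.\ negatively signed) upper edge of $M$ flips the sign carried along the cycle, and the cycle closes with a total sign $+$ precisely when the number of blue edges is even. Under $g_{\bf r}$ a blue edge $o_{|\sigma(j)|} \cdot c_j$ of $M$ records $\sigma(j) < 0$, so ``even number of blue edges in the cycle of $(M,M_0)$'' translates to ``balanced cycle of $\sigma\sigma_0^{-1}$'' (a cycle $(a_1,\dots,a_k)$ with an even total sign), and ``odd number of blue edges'' to ``unbalanced cycle'' $(a_1,\dots,a_k,\bar a_1,\dots,\bar a_k)$. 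This gives $\Cyc_0(\sigma\sigma_0^{-1}) = \Cyc_0(M,M_0)$ and $\Cyc_1(\sigma\sigma_0^{-1}) = \Cyc_1(M,M_0)$, and substituting into the preceding Corollary yields~\eqref{oddeven}.

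The main obstacle is making the sign/color correspondence in the previous paragraph airtight: one must check that the parity of blue edges encountered when traversing an alternating cycle of $(M,M_0)$ really equals the ``balancedness'' parity of the corresponding cycle of $\sigma\sigma_0^{-1}$, taking care that $M_0$ contributes only red (positive) edges so all the sign contributions come from $M$, and that reversing a cycle does not change this parity. This is a direct unwinding of the definition of the group operation in $B_n$ (where a transposition $(ij)$ simultaneously swaps $i\leftrightarrow j$ and $\bar i \leftrightarrow \bar j$) against the edge-following rule in the graph $(M,M_0)$; once that is in place, everything else is an immediate specialization of results already proved. I would also note in passing, as a companion remark, that since $\text{\bf{id}} \in B_{\bf{r}}$ one may take $\sigma_0 = \text{\bf{id}}$ and invoke Lemma~\ref{permlemma} to replace $\sor_{\bf{r}}(\sigma,\text{\bf{id}})$ by $\sor(\sigma)$ and $\sigma\sigma_0^{-1}$ by $\sigma$, recovering the unrestricted statement when ${\bf r} = (n,\dots,n)$.
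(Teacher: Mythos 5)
Your proposal is correct and follows essentially the same route as the paper: the paper's proof is a one-line appeal to the natural correspondence between the cycles of $\sigma\sigma_0^{-1}$ and the alternating cycles of the bicolored graph $(g_{\bf{r}}(\sigma), g_{\bf{r}}(\sigma_0))$, combined with the preceding corollary for bicolored matchings. Your additional bookkeeping identifying "even number of blue edges" with "balanced cycle" is exactly the detail the paper leaves implicit, and it is correct.
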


\begin{proof} As in the monochromatic case, there is a natural correspondence between the cycles of the permutation $\sigma \sigma_0^{-1}$ and the cycles in the bicolored graph $(g_{\bf{r}}(\sigma), g_{\bf{r}}(\sigma_0))$.
\end{proof}

\begin{corollary} Let $ \sigma_0 \in B_{\bf{r}}$ with $\sigma(k) > 0$ for $k >0$. Then
\begin{equation} \sum_{\sigma \in B_{\bf{r}}} q^{\sor_{\bf{r}}(\sigma,\sigma_0)}  t^{\ell'_B(\sigma \sigma_0^{-1})}  =   \prod_{k=1}^n \left(1 + q[h_k-1]_q t    +q^{2k-h_k}[h_k]_q t\right). \end{equation} In particular,
\begin{equation} \sum_{\sigma \in B_{\bf{r}}} q^{\sor_{\bf{r}}(\sigma)}  t^{\ell'_B(\sigma)}  =   \prod_{k=1}^n \left(1 + q[h_k-1]_q t    +q^{2k-h_k}[h_k]_q t\right). \end{equation} 

\end{corollary}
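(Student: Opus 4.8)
The plan is to obtain both identities as one-variable specializations of Corollary~\ref{coradd}. The only additional ingredient is the description of the type~$B_n$ reflection length through cycle structure: for every $\tau \in B_n$ one has $\ell'_B(\tau) = n - \cyc_0(\tau)$, the type~$B$ analogue of $\ell'(\sigma) = n - \cyc(\sigma)$ in $S_n$. This can be read off from Petersen's type~$B$ sorting factorization $\tau = (i_1 j_1)\cdots(i_k j_k)$ with $0 < j_1 < \cdots < j_k$, which uses exactly $\ell'_B(\tau)$ transpositions: the set $\{j_1,\dots,j_k\}$ consists precisely of those elements of $[n]$ that are \emph{not} minimal in absolute value in a balanced cycle of $\tau$, so $k = n - \cyc_0(\tau)$. (Alternatively this is a short, standard lemma about $B_n$ and can simply be quoted.) Note that it holds for all of $B_n$, hence applies to $\tau = \sigma\sigma_0^{-1}$ even though $\sigma\sigma_0^{-1}$ need not lie in $B_{\bf{r}}$.

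Granting this, I would substitute $t_i = 1/t$ and $s_i = 1$ for all $i$ into the polynomial identity of Corollary~\ref{coradd}. Its left-hand side becomes $\sum_{\sigma \in B_{\bf{r}}} q^{\sor_{\bf{r}}(\sigma,\sigma_0)}\, t^{-\cyc_0(\sigma\sigma_0^{-1})}$; multiplying both sides by $t^n$ and using $\ell'_B(\sigma\sigma_0^{-1}) = n - \cyc_0(\sigma\sigma_0^{-1})$ turns the left-hand side into $\sum_{\sigma \in B_{\bf{r}}} q^{\sor_{\bf{r}}(\sigma,\sigma_0)}\, t^{\ell'_B(\sigma\sigma_0^{-1})}$. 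After the same multiplication by $t^n$ the $k$-th factor on the right becomes $1 + q[h_k-1]_q\, t + q^{2k-h_k}[h_k-1]_q\, t + q^{2k-1}\, t$, and since $q^{2k-h_k}[h_k-1]_q + q^{2k-1} = q^{2k-h_k}\bigl([h_k-1]_q + q^{h_k-1}\bigr) = q^{2k-h_k}[h_k]_q$, this factor equals $1 + q[h_k-1]_q\, t + q^{2k-h_k}[h_k]_q\, t$. This establishes the first identity.

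For the ``in particular'' statement I would take $\sigma_0 = \text{\bf{id}}$, which lies in $B_{\bf{r}}$ because $r_k \ge k$ for all $k$ and has only positive values, so it satisfies the hypothesis of the corollary. Then $\sigma\sigma_0^{-1} = \sigma$, and Lemma~\ref{permlemma} gives $\sor_{\bf{r}}(\sigma,\text{\bf{id}}) = \sor(\sigma)$; substituting both into the first identity yields the second. I do not expect a genuine obstacle here: the argument is a direct substitution plus a one-line $q$-integer manipulation, and the only point demanding a little care is recording the identity $\ell'_B(\tau) = n - \cyc_0(\tau)$ for arbitrary $\tau \in B_n$ (as opposed to only $\tau \in B_{\bf{r}}$).
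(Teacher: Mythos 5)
Your proposal is correct and follows essentially the same route as the paper: the paper likewise establishes $\ell'_B(\tau) = n - \cyc_0(\tau)$ (by noting a balanced cycle on $k$ letters has reflection length $k-1$ and an unbalanced one has reflection length $k$) and then specializes Corollary~\ref{coradd}, exactly the substitution $t_i = 1/t$, $s_i = 1$ followed by multiplication by $t^n$ that you carry out, with the same $q$-integer simplification $q^{2k-h_k}[h_k-1]_q + q^{2k-1} = q^{2k-h_k}[h_k]_q$. Your explicit remark that the identity $\ell'_B(\tau) = n - \cyc_0(\tau)$ is needed for all $\tau \in B_n$ (not just $\tau \in B_{\bf{r}}$) is a point the paper leaves implicit.
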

\begin{proof} It is not difficult to see that the  reflection length of a balanced cycle $(a_1, \dots, a_k)$ is $k-1$, while the reflection length of an unbalanced cycle is $k$. Therefore, $\ell'_B(\sigma) = n - \cyc_0(\sigma)$. The result follows from~\eqref{oddeven}.
\end{proof}

The minimal number of terms in $\{(1 \bar{1})\} \cup \{(i \; i+1): 1 \leq i \leq n\}$ needed to express $\sigma \in B_n$ is called the  length of $\sigma$. It is known to be equal to the type $B_n$ inversion number $\inv_B$ given in~\eqref{invb}. 

\begin{lemma} \label{hm} Let $\sigma \in B_{\bf{r}}$ and $M = g_{\bf{r}}(\sigma)$. Then $$\inv_B (\sigma) =  \mathrm{mix}(M).$$
\end{lemma}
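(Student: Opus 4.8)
The plan is to unwind both sides of the claimed identity $\inv_B(\sigma) = \mathrm{mix}(M)$ into sums of contributions indexed by the edges of $M = g_{\bf r}(\sigma)$ and match them term by term. Recall that $M$ has edges $o_{|\sigma(k)|}\cdot c_k$, colored red when $\sigma(k)>0$ and blue when $\sigma(k)<0$, and that by the definition in the excerpt,
\[ \mathrm{mix}(M) = \nn(M) + 2\cc_{\ast b}(M) + 2\al_{\ast b}(M) + \mathrm{b}(M). \]
On the other side, $\inv_B(\sigma)$ is given by \eqref{invb} as the sum of three pieces: the ordinary inversions $A(\sigma) = |\{i<j : \sigma(i)>\sigma(j)\}|$, the "negative-sum" inversions $B(\sigma) = |\{i<j : -\sigma(i)>\sigma(j)\}|$, and $N(\sigma)$, the number of negative entries. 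First I would observe that $N(\sigma) = \mathrm{b}(M)$ immediately from the coloring rule, so the two "$+\,\mathrm{b}$" terms cancel and it remains to show $A(\sigma) + B(\sigma) = \nn(M) + 2\cc_{\ast b}(M) + 2\al_{\ast b}(M)$.

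The core of the argument is a local analysis of each pair of indices $j<k$ (equivalently each pair of arcs $o_{|\sigma(j)|}\cdot c_j$ and $o_{|\sigma(k)|}\cdot c_k$, where $c_j < c_k$), splitting into cases according to the signs of $\sigma(j),\sigma(k)$ and the relative order of $|\sigma(j)|,|\sigma(k)|$. For a fixed pair, the left side contributes $\chi(\sigma(j)>\sigma(k)) + \chi(-\sigma(j)>\sigma(k))$, a number in $\{0,1,2\}$; the right side contributes, counted with the stated multiplicities, $\chi(\text{nesting}) + 2\chi(\text{crossing, right arc blue}) + 2\chi(\text{alignment, right arc blue})$. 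Since $c_j < c_k$, the openers determine whether the arc-pair is a nesting ($o_{|\sigma(k)|}$ between $o_{|\sigma(j)|}$ and $c_j$, i.e.\ $|\sigma(k)| < |\sigma(j)|$ and $o_{|\sigma(j)|}<c_j$... more precisely a nesting occurs exactly when $|\sigma(k)|<|\sigma(j)|$ with the $k$-arc having the larger opener? here one must be careful: the left arc of the pair is the one with the smaller opener, so I need to decide which of $o_{|\sigma(j)|}, o_{|\sigma(k)|}$ is smaller), a crossing, or an alignment, and the color of the right (larger-opener) arc is read off from the sign of the corresponding $\sigma$-value. Going through the (at most) eight cases — two sign patterns for $j$, two for $k$, two orders of $|\sigma(j)|$ vs $|\sigma(k)|$ — and checking that the two local contributions agree in each, gives the identity after summing over all pairs $j<k$.

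The step I expect to be the main obstacle is the bookkeeping in the mixed-sign cases, say $\sigma(j)>0$ and $\sigma(k)<0$ (so the right arc is blue): here $-\sigma(j)>\sigma(k)$ is automatic (a positive number exceeds a negative one) giving a guaranteed $+1$ on the left, and $\sigma(j)>\sigma(k)$ is also automatic, giving $+2$ on the left, which must be matched by the right side contributing $2$ — forcing the arc-pair to be a crossing or an alignment (never a nesting) whenever the larger-opener arc is blue and the smaller-opener arc is red; one has to verify this is exactly what the opener order $o_{|\sigma(j)|}$ vs $o_{|\sigma(k)|}$ dictates in this sign pattern. Symmetrically, when $\sigma(j)<0<\sigma(k)$ the right arc is red and both left-side indicators can fail, requiring the right side to see a nesting of multiplicity $1$ or $0$ accordingly; and the two same-sign cases reduce essentially to the monochromatic correspondence $\nn(f_{\bf r}(\sigma)) = \inv(\sigma)$ already recorded in the excerpt, now applied to $|\sigma|$ restricted to the blue-blue and red-red sub-pairs. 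Once the case table is complete and each row checks out, summing yields $A(\sigma)+B(\sigma) = \nn(M)+2\cc_{\ast b}(M)+2\al_{\ast b}(M)$, and combining with $N(\sigma)=\mathrm{b}(M)$ completes the proof.
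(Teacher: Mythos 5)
Your overall strategy is the same as the paper's: observe $N(\sigma)=\mathrm{b}(M)$, and then match, pair by pair, the contribution of $\{j,k\}$ to $|\{i<j:\sigma(i)>\sigma(j)\}|+|\{i<j:-\sigma(i)>\sigma(j)\}|$ against the contribution of the corresponding pair of arcs to $\nn(M)+2\cc_{\ast b}(M)+2\al_{\ast b}(M)$. The paper organizes the cases by the relative position of the two arcs (nesting versus crossing versus alignment) and then by the color of the right arc, which turns out to be the correct order in which to do the bookkeeping.

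However, the concrete case analysis you sketch for the mixed-sign case is wrong, and following it literally would produce an incorrect count. Take $j<k$ with $\sigma(j)>0>\sigma(k)$. You claim $-\sigma(j)>\sigma(k)$ is ``automatic (a positive number exceeds a negative one)''; but $-\sigma(j)$ is \emph{negative} here, and $-\sigma(j)>\sigma(k)$ holds if and only if $|\sigma(k)|>|\sigma(j)|$. You also assert the right arc is blue; but the right arc is by definition the one with the larger \emph{opener}, namely the arc of $c_j$ when $|\sigma(j)|>|\sigma(k)|$ (in which case the pair is a nesting and the right arc is red) and the arc of $c_k$ when $|\sigma(j)|<|\sigma(k)|$ (in which case the pair is a crossing or alignment and the right arc is blue). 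So this sign pattern genuinely splits: when $|\sigma(j)|>|\sigma(k)|$ the pair \emph{is} a nesting with red right arc and contributes $1$ to each side (only $\sigma(j)>\sigma(k)$ holds), so your claim that such pairs are ``never a nesting'' is false; only when $|\sigma(j)|<|\sigma(k)|$ does the pair contribute $2$ to each side. A similar caveat applies to your parenthetical ``$\sigma(j)<0<\sigma(k)$ so the right arc is red,'' and to the blue--blue case, where the local contribution is $1+\chi(\text{crossing or alignment})$ rather than the monochromatic nesting count applied to $|\sigma|$. The identity itself survives: once the cases are keyed to the arc configuration (nesting/crossing/alignment) and the color of the right arc, rather than to the sign pattern of $(\sigma(j),\sigma(k))$ alone, each of the six configurations matches, and that is exactly the paper's proof.
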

\begin{proof}
Clearly, $N(\sigma)$ is equal to the number of blue edges in $M$. Moreover, two arcs $M(c_i) \cdot c_i$ and $M(c_j) \cdot c_j$ with $i < j$ can be in three different relative positions. 
\begin{itemize}
\item[(i)] They form a nesting. If the right arc is red, we have $\sigma(i) > \sigma(j)$ but not $-\sigma(i) > \sigma(j)$. If the right arc is blue, we have $-\sigma(i) > \sigma(j)$, but not $\sigma(i) > \sigma(j)$.
\item[(ii)] They form a crossing. If the right arc is blue, we have both $\sigma(i) > \sigma(j)$ and $-\sigma(i) > \sigma(j)$. If the right arc is red, neither $\sigma(i) > \sigma(j)$ nor $\sigma(i) > \sigma(j)$ is true.
\item[(iii)] They form an alignment. This case is the same as (ii).
\end{itemize}
\end{proof}

For a signed permutation $\sigma$ we define the set of positive right-to-left minimum letters to be
\[\mathrm{Prlminl}(\sigma) = \{k : 0 < \sigma(k) < |\sigma(l)| \text{ for all } l>k \}.\] It is not difficult to see that $o_k \cdot g_{\bf{r}}(\sigma)(o_k)$ is a large red edge if and only if $k \in \mathrm{Prlminl}(\sigma)$. Therefore we get the following corollary.
\begin{corollary} Let $ \sigma_0 \in B_{\bf{r}}$ with $\sigma(k) > 0$ for $k >0$. Then 
\[  \sum_{\sigma \in B_{\bf{r}}} q^{\sor_{\bf{r}}(\sigma,\sigma_0)}  \prod_{i \in \Cyc_0(\sigma \sigma_0^{-1})}t_i  =  \sum_{\sigma \in B_{\bf{r}}} q^{\inv_B(\sigma)}  \prod_{i \in \mathrm{Prlminl}(\sigma)}t_i = \prod_{k=1}^n \left(t_i + q[h_k-1]_q   +q^{2k-h_k}[h_k]_q \right).\]
\end{corollary}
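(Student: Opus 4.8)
The plan is to deduce this final corollary by combining three ingredients already established in the excerpt: the generating function for the sorting index and balanced cycles from Corollary~\ref{coradd} (specialized appropriately), the identification of $\inv_B$ with $\mathrm{mix}$ from Lemma~\ref{hm}, and the generating function for $\mathrm{mix}$ together with $\mathrm{Longr}$ from Corollary~\ref{corb}, transported to $B_{\bf{r}}$ via the bijection $g_{\bf{r}}$. The first equality of the displayed identity is the heart of the matter, and the second is essentially a restatement of Corollary~\ref{corb}.

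First I would handle the left-hand side. Starting from Corollary~\ref{coradd}, set all the $s_i$ equal to $1$; this collapses the $\Cyc_1$ product and yields
\[ \sum_{\sigma \in B_{\bf{r}}} q^{\sor_{\bf{r}}(\sigma,\sigma_0)} \prod_{i \in \Cyc_0(\sigma \sigma_0^{-1})} t_i = \prod_{k=1}^n \left( t_k + (q + q^{2k-h_k})[h_k-1]_q + q^{2k-1} \right). \]
Then I would simplify the factor: since $q^{2k-1} = q^{2k-h_k} \cdot q^{h_k-1}$ and $[h_k-1]_q + q^{h_k-1} = [h_k]_q$, the bracketed expression becomes $t_k + q[h_k-1]_q + q^{2k-h_k}[h_k]_q$. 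This already matches the claimed right-hand product, so the left equality of the corollary will follow once I show the middle quantity equals this same product.

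Next I would treat the middle sum. Via the bijection $g_{\bf{r}}: B_{\bf{r}} \to \mathcal{M}_n^{(2)}(D({\bf{r}}))$, Lemma~\ref{hm} gives $\inv_B(\sigma) = \mathrm{mix}(g_{\bf{r}}(\sigma))$, and the remark in the text just before the corollary statement says $o_k \cdot g_{\bf{r}}(\sigma)(o_k)$ is a large red edge exactly when $k \in \mathrm{Prlminl}(\sigma)$, so $\mathrm{Prlminl}(\sigma) = \mathrm{Longr}(g_{\bf{r}}(\sigma))$ (as subsets of $[n]$, using that openers are indexed $o_1 < \cdots < o_n$ and the $k$-th closer $c_k$ is matched to $o_{|\sigma(k)|}$ — here I should double-check that the indexing convention makes the set equality hold on the nose, not just in cardinality; this is the one bookkeeping point I would want to verify carefully). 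Granting that, $\sum_{\sigma \in B_{\bf{r}}} q^{\inv_B(\sigma)} \prod_{i \in \mathrm{Prlminl}(\sigma)} t_i = \sum_{M \in \mathcal{M}_n^{(2)}(D({\bf{r}}))} q^{\mathrm{mix}(M)} \prod_{i \in \mathrm{Longr}(M)} t_i$, which by Corollary~\ref{corb} equals $\prod_{k=1}^n (t_k + q[h_k-1]_q + q^{2k-h_k}[h_k]_q)$, where $(h_1,\dots,h_n)$ is the height sequence of $D({\bf{r}})$.

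The main obstacle, such as it is, is not conceptual but notational: ensuring that the index sets $\Cyc_0(\sigma\sigma_0^{-1})$, $\mathrm{Prlminl}(\sigma)$, and $\mathrm{Longr}(M)$ are all genuinely subsets of the same $[n]$ under the stated bijections, and that the algebraic simplification $t_k + (q+q^{2k-h_k})[h_k-1]_q + q^{2k-1} = t_k + q[h_k-1]_q + q^{2k-h_k}[h_k]_q$ is carried out correctly (it hinges only on $q^{2k-h_k}[h_k-1]_q + q^{2k-1} = q^{2k-h_k}[h_k]_q$, i.e. on $[h_k-1]_q + q^{h_k-1} = [h_k]_q$). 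Once these are in place, the corollary is immediate from chaining the two equalities through the common product, and I would state it exactly as a two-line consequence of Corollaries~\ref{coradd} and~\ref{corb} and Lemma~\ref{hm}.
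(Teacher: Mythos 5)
Your proposal is correct and follows the paper's own route exactly: the paper likewise deduces the corollary by combining Corollary~\ref{coradd} (with the $s_i$ specialized away), Lemma~\ref{hm}, and Corollary~\ref{corb}, using that positive right-to-left minimum letters of $\sigma$ correspond to openers of long red edges of $g_{\bf{r}}(\sigma)$. Your explicit check of the simplification $q^{2k-h_k}[h_k-1]_q + q^{2k-1} = q^{2k-h_k}[h_k]_q$ is a detail the paper leaves implicit, but it is the same argument.
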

\begin{proof} The positive right-to-left minimum letters in the signed permutation $\sigma$ correspond to the openers of the long red edges in $g_{\bf{r}}(\sigma)$.  So, the result follows from Corollary~\ref{coradd}, Lemma~\ref{hm}, and Corollary~\ref{corb}.
\end{proof}
Recall that $$\mathrm{nmin}_B(\sigma)  = |\{ i: \sigma(i) > |\sigma(j) \text{ for some } j>i\}| + N(\sigma).$$ It is readily seen that
$$\mathrm{nmin}_B(\sigma) = n - |\mathrm{Prlminl}(\sigma)|.$$ 
\begin{corollary} Let $ \sigma_0 \in B_{\bf{r}}$ with $\sigma(k) > 0$ for $k >0$. Then
\begin{equation} \sum_{\sigma \in B_{\bf{r}}} q^{\mathrm{inv}_B(\sigma)}  t^{\mathrm{nmin}_B(\sigma)}  = \sum_{\sigma \in B_{\bf{r}}} q^{\sor_{\bf{r}}(\sigma,\sigma_0)}  t^{\ell'_B(\sigma \sigma_0^{-1})}. \end{equation}
\end{corollary}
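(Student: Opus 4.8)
The plan is to derive this equidistribution formally from the preceding corollary by a one-variable specialization followed by two complementation identities that have already been recorded. First I would take the set-valued identity
\[ \sum_{\sigma \in B_{\bf{r}}} q^{\sor_{\bf{r}}(\sigma,\sigma_0)}  \prod_{i \in \Cyc_0(\sigma \sigma_0^{-1})}t_i  =  \sum_{\sigma \in B_{\bf{r}}} q^{\inv_B(\sigma)}  \prod_{i \in \mathrm{Prlminl}(\sigma)}t_i \]
and set $t_i = t$ for every $i$; each product collapses to a power of $t$ governed by the size of the corresponding set, giving
\[ \sum_{\sigma \in B_{\bf{r}}} q^{\sor_{\bf{r}}(\sigma,\sigma_0)}  t^{\cyc_0(\sigma \sigma_0^{-1})}  =  \sum_{\sigma \in B_{\bf{r}}} q^{\inv_B(\sigma)}  t^{|\mathrm{Prlminl}(\sigma)|}. \]

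Next I would rewrite both exponents of $t$ via the relations already established in the excerpt. On the left, every signed permutation $\tau \in B_n$ satisfies $\ell'_B(\tau) = n - \cyc_0(\tau)$ — since a balanced cycle of length $k$ has reflection length $k-1$ and an unbalanced one has reflection length $k$ — so $\cyc_0(\sigma\sigma_0^{-1}) = n - \ell'_B(\sigma\sigma_0^{-1})$; on the right, $\mathrm{nmin}_B(\sigma) = n - |\mathrm{Prlminl}(\sigma)|$. Substituting yields
\[ \sum_{\sigma \in B_{\bf{r}}} q^{\sor_{\bf{r}}(\sigma,\sigma_0)}  t^{\,n-\ell'_B(\sigma \sigma_0^{-1})}  =  \sum_{\sigma \in B_{\bf{r}}} q^{\inv_B(\sigma)}  t^{\,n - \mathrm{nmin}_B(\sigma)}. \]
Since the exponents of $t$ on both sides lie in $\{0,1,\dots,n\}$, each side is a polynomial in $t$ of degree at most $n$, so I may replace $t$ by $t^{-1}$ and multiply through by $t^n$; the exponents $n-\ell'_B(\sigma\sigma_0^{-1})$ and $n-\mathrm{nmin}_B(\sigma)$ turn into $\ell'_B(\sigma\sigma_0^{-1})$ and $\mathrm{nmin}_B(\sigma)$, which is precisely the claimed identity. (Specializing $\sigma_0 = \text{\bf{id}}$ and invoking Lemma~\ref{permlemma} would additionally give the unparametrized form with $\sor_{\bf{r}}(\sigma)$ and $\ell'_B(\sigma)$, though the corollary as stated needs only the general $\sigma_0$.)

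Essentially all of the combinatorial content has already been done — in the preceding corollary, and ultimately in the construction of the bijection $\phi_2$ — so what remains is pure bookkeeping; I do not expect a genuine obstacle. The only points needing a moment's care are (i) that the two complementation identities are valid term by term for \emph{every} element of the relevant sets (the first for all of $B_n$, hence for $\sigma\sigma_0^{-1}$; the second for all $\sigma \in B_{\bf{r}}$), so the exponent substitutions are legitimate, and (ii) the degree bound in $t$ that licenses the reciprocal substitution $t \mapsto t^{-1}$. Both are immediate from the definitions, so I expect the proof to occupy a single short paragraph.
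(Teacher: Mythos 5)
Your proposal is correct and follows essentially the same route the paper intends: the corollary is an immediate consequence of the preceding set-valued identity specialized at $t_i = t$, combined with the complementation relations $\ell'_B(\tau) = n - \cyc_0(\tau)$ and $\mathrm{nmin}_B(\sigma) = n - |\mathrm{Prlminl}(\sigma)|$, both of which the paper records just beforehand. The reciprocal substitution $t \mapsto t^{-1}$ followed by multiplication by $t^n$ is a legitimate (if slightly more formal than necessary) way to flip the exponents, so there is no gap.
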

Since $\mathrm{\bf{id}} \in B_{\bf{r}}$ for every sequence ${\bf{r}}$, this generalizes the equidistribution result~\eqref{petb} for signed permutations~\cite{Petersen}.


\subsection{Type $D_n$ permutations}

The type $D_n$ permutations can be defined as signed permutations with an even number of minus signs.  The type $D_n$ inversion number is defined as 
\[ \inv_D(\sigma) = |\{ 1 \leq i < j \leq n: \sigma(i) > \sigma(j) \}| + |\{ 1 \leq i < j \leq n: -\sigma(i) > \sigma(j)|.\] It is well known~\cite{BB} that $\inv_D(\sigma)$ is equal to the length of $\sigma$, i.e.,  the minimal number of transpositions in $\{(\bar{1} 2), (1 2), ( 2 3), \dots, (n-1 \; n)\}$ needed to express $\sigma$ and that
\[\sum_{\sigma \in D_n}q^{\inv_D(\sigma)} = \prod_{i=1}^{n-1} (1+q^i)[i+1]_q = [n]_q \cdot \prod_{i=1}^{n-1} [2i]_q.\] For type $D_n$ permutations, we define 
\[\mathrm{Prlminl}'(\sigma) =\{\sigma(k): 1 < \sigma(k) < |\sigma(l)| \text{ for all } l>k  \} .\]

If $\sigma = (i_1 j_1)\cdots(i_k j_k)$ is the unique factorization with $1 < j_1< \cdots < j_k$,  the type $D_n$ sorting index is defined to be
\[\sor_D(\sigma) = \sum_{s=1}^k \left(j_s - i_s -2 \cdot \chi(i_s <0) \right).\] Petersen~\cite{Petersen} proved that
\begin{equation} \label{petdsorinv} \sum_{\sigma \in D_n}q^{\sor_D(\sigma)} = \sum_{\sigma \in D_n}q^{\inv_D(\sigma)}.\end{equation}

The type $D_n$ permutations correspond to bicolored matchings with an even number of blue edges via the map $g_{\bf{r}}$. Therefore, results for type $D_n$ permutations analogous to those given for signed permutations can be derived by considering appropriate restrictions of the bijections $\varphi_2$ and $\phi_2$. Namely, the color of the first rise of a bicolored weighted Dyck path that corresponds to a bicolored matching with an even number of blue edges via any of these two bijections  is determined by the colors of the other rises. The type $D_n$ inversions correspond to a modified $\mathrm{mix}$ statistic of bicolored matchings. Let
$$\mathrm{mix}'(M) = \nn(M) + 2\cc_{\ast b} (M) + 2 \al_{\ast b}(M)  \hspace{1cm} \text{ and } \hspace{1cm} \mathrm{Longr}'(M) = \mathrm{Longr}(M) \backslash \{1\}.$$

 \begin{corollary} Let $D$ be a Dyck path with height sequence $(h_1, \dots, h_n)$. Then
  \begin{equation} \label{almost} \sum_{M} q^{\mathrm{mix}'(M)}  \prod_{i \in \mathrm{Longr}'(M)} t_i =  \prod_{i=2}^n \left( t_i + q [h_i-1] _q + q^{2i-h_i-1}[h_i]_q \right),\end{equation}
  where the sum is  over all bicolored matchings of type $D$ with an even number of blue edges.
 \end{corollary}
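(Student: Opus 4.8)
The plan is to derive this corollary from Theorem~\ref{thmsigned} by specializing the refined generating function $P_D(\mathbf{q},p,\mathbf{t})$ and then restricting to an even number of blue edges. Recall that $\mathrm{mix}'(M) = \nn(M) + 2\cc_{\ast b}(M) + 2\al_{\ast b}(M)$ differs from $\mathrm{mix}(M)$ only by the missing $\mathrm{b}(M)$ term; so the appropriate specialization of the six $q_i$ variables is the same one used in Corollary~\ref{corb} (namely $q_1=q_2=q$, $q_3=q_5=1$, $q_4=q_6=q^2$), but with $p=1$ instead of $p=q$. Under this substitution, the $i$-th factor of $P_D$ becomes
\[ \sum_{k=1}^{h_i}\bigl(q^{k-1} t_i^{\delta_{k,1}} + q^{2(h_i-k)} q^{2(i-h_i)}\bigr) = \bigl(t_i + q[h_i-1]_q\bigr) + q^{2i-h_i}[h_i]_q, \]
where I have separated the red contribution (the $k=1$ term gives $t_i$, the terms $k=2,\dots,h_i$ give $q[h_i-1]_q$) from the blue contribution $q^{2i - 2h_i}\sum_{k=1}^{h_i} q^{2(h_i-k)} = q^{2i-h_i}[h_i]_q$. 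This recovers Corollary~\ref{corb} after also tracking $\mathrm{Longr}$.

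The new ingredient is the parity restriction. The key observation, already noted in the paragraph preceding the statement, is that in the bijection $\varphi_2$ the color $\epsilon_1$ of the first rise (which has height $h_1 = 1$) is forced once all the other $\epsilon_i$ are fixed, because requiring an even total number of blue edges is one linear condition mod $2$. Concretely, a bicolored weighted Dyck path of semilength $n$ with an even number of blue rises is determined by $(D, (w_1,\dots,w_n),(\epsilon_2,\dots,\epsilon_n))$ with $\epsilon_1 \equiv \epsilon_2 + \cdots + \epsilon_n \pmod 2$; since $h_1=1$ forces $w_1=1$, the first rise contributes a factor of exactly $1$ to $\mathrm{mix}'$ regardless of its color, and it contributes to $\mathrm{Longr}'$ only through the index $1$, which has been explicitly deleted. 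Hence summing the product formula of Theorem~\ref{thmsigned} (under the above specialization) over only those colorings with $\sum \epsilon_i$ even amounts to dropping the $i=1$ factor entirely and taking the product over $i=2,\dots,n$ of the factors $t_i + q[h_i-1]_q + q^{2i-h_i-1}[h_i]_q$. Note the exponent shift from $q^{2i-h_i}$ to $q^{2i-h_i-1}$: this is exactly the effect of using $p=1$ (which removes one power of $q$ per blue edge relative to Corollary~\ref{corb}'s $p=q$) — in $\mathrm{mix}'$ the blue edge at level $i$ of height $h_i$ contributes $2(h_i-k) + 2(i-h_i) = 2i - 2h_i + 2(h_i-k)$, and summing $q^{2(h_i-k)}$ over $k=1,\dots,h_i$ gives $q^{2i-2h_i}[h_i]_{q^2}$; I should present the blue factor as $q^{2i-h_i-1}[h_i]_q$ after checking which $q$ vs $q^2$ convention matches, i.e.\ verifying $q^{2i-2h_i}\sum_{k=1}^{h_i}q^{2(h_i-k)}$ rewrites correctly.

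The main obstacle — and the only place requiring genuine care rather than bookkeeping — is pinning down the exact exponent of $q$ in the blue term, since the statement uses $[h_i]_q$ (not $[h_i]_{q^2}$) with the single exponent $2i-h_i-1$, whereas the natural computation produces a sum of $q^{2(\cdot)}$ terms. I expect this to resolve because $\mathrm{mix}'$ weights a blue right-edge at level $i$ by $\nn + 2\cc_{\ast b} + 2\al_{\ast b}$, and for the $w$-th such placement the nesting count is $h_i - w$ while the crossing-plus-alignment count is $(w-1) + (i - h_i)$, giving total $(h_i - w) + 2(w-1) + 2(i-h_i) = 2i - h_i + w - 2$; as $w$ ranges over $1,\dots,h_i$ this gives $q^{2i-h_i-1}(1 + q + \cdots + q^{h_i-1}) = q^{2i-h_i-1}[h_i]_q$, matching the claim. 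Thus the proof is: specialize Theorem~\ref{thmsigned} with $p=1$ and the same $q_i$-substitution as Corollary~\ref{corb}; observe that restricting to even blue-count is equivalent, via $\varphi_2$, to fixing $(\epsilon_2,\dots,\epsilon_n)$ freely and forcing $\epsilon_1$, which since $h_1=1$ and index $1$ is excluded from $\mathrm{Longr}'$ simply deletes the $i=1$ factor; and conclude the displayed product over $i=2,\dots,n$.
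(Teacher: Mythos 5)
Your proof is correct and follows essentially the same route as the paper: restrict $\varphi_2$ to colorings with an even number of blue rises (which forces $\epsilon_1$ and, since $h_1=1$, makes the $i=1$ factor trivial) and specialize the variables of Theorem~\ref{thmsigned} with $p=1$. The only blemish is that your first displayed factor miswrites the blue summand --- it should be $q^{h_i-k}q^{2(k-1)}q^{2(i-h_i)}$, which sums to $q^{2i-h_i-1}[h_i]_q$ rather than $q^{2i-h_i}[h_i]_q$ --- but the direct count in your final paragraph corrects this and lands on the stated exponent.
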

 
 \begin{proof} We use the bijection $\varphi_2$ restricted to the bicolored weighted Dyck paths with even number of blue edges. Similarly as in the proof of Theorem~\ref{thmsigned}, we get that 
\[ \sum_{M } q_1^{\nn_{*r}(M)}q_2^{\nn_{*b}(M)}q_3^{\cc_{*r}(M)}q_4^{\cc_{*b}(M)}q_5^{\al_{*r}(M)}q_6^{\al_{*b}(M)} \prod_{i \in \mathrm{Longr}'(M)} t_i,  \] where the sum is  over all bicolored matchings of type $D$ with an even number of blue edges, is equal to \[ \prod_{i=2}^{n} \sum_{k=1}^{h_i}(q_1^{k-1} q_3^{h_i-k} q_5^{i-h_i} t_i^{\delta_{k,1}} + q_2^{h_i-k} q_4^{k-1} q_6^{i-h_i}).\]  Setting $q_1 = q_2 =q$, $q_4 = q_6 = q^2$, and $q_3 = q_5 =1$ yields~\eqref{almost}. 
 \end{proof}

The type $D_n$ sorting index of a permutation corresponds to the following modified sorting index of bicolored matchings. The matching $M$ is sorted to $M_0$ as before and
\[ \sor'(M) = \sum_{k=2}^{n} \sor'_k(M, M_0)\] where, for $2 \leq k \leq n$,
\begin{equation} \label{sord}
\sor'_k(M, M_0) = \begin{cases}
\sor_k(M, M_0), &\text{ if }  \col(o_k, M_k) = 0 \\
\sor_k(M, M_0) -1 , &\text{ if }  \col(o_k, M_k) =1 .
 \end{cases}
 \end{equation}
 Also, let  $\Cyc'_0(M, M_0) = \Cyc_0(M, M_0) \backslash \{1\} $ and $\Cyc'_1(M, M_0) = \Cyc_1(M, M_0) \backslash \{1\} $.
 
 \begin{lemma} Let $M_0$ be a matching with all red edges of type $D$ and let $(h_1, \dots, h_n)$ be the height sequence of $D$. Suppose $M=\phi_2((w_1, \dots, w_n), (\epsilon_1, \dots, \epsilon_n))$ is a bicolored matching of type $D$ with an even number of blue edges, where $\phi_2$ depends on $M_0$. Then
 $$\sor'(M, M_0) = \sum_{k=2}^n \left( w_k + \epsilon_k( 2k-1-h_k) -1\right).$$
 \end{lemma}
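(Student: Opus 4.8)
The plan is to reduce the identity to the local computation already performed in the proof of the theorem that establishes the bijection $\phi_2$, combined with the definition \eqref{sord} of $\sor'_k$. Two facts from that proof are all that is needed: for $M = \phi_2((w_1,\dots,w_n),(\epsilon_1,\dots,\epsilon_n))$ with intermediary sorting sequence $M = M_n, M_{n-1}, \dots, M_1 = M_0$, one has $\col(o_k, M_k) = \epsilon_k$ for every $k$, and, by \eqref{rtf}, $\sor_k(M, M_0) = w_k - 1$ when $\epsilon_k = 0$ and $\sor_k(M, M_0) = 2k - 1 - h_k + w_k$ when $\epsilon_k = 1$. I would first remark that both statements are unaffected by restricting $\phi_2$ to bicolored weighted Dyck paths with an even number of blue rises: the rule by which each individual opener is connected and colored is unchanged, only the color $\epsilon_1$ of the first rise ceases to be a free parameter, and neither $w_1$ nor $\epsilon_1$ appears in the asserted formula.

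The core of the argument is then a term-by-term substitution. Fix $k$ with $2 \leq k \leq n$ and plug the two facts above into \eqref{sord}. If $\epsilon_k = \col(o_k, M_k) = 0$, then $\sor'_k(M, M_0) = \sor_k(M, M_0) = w_k - 1 = w_k + \epsilon_k(2k - 1 - h_k) - 1$, since $\epsilon_k = 0$. If $\epsilon_k = \col(o_k, M_k) = 1$, then $\sor'_k(M, M_0) = \sor_k(M, M_0) - 1 = (2k - 1 - h_k + w_k) - 1 = w_k + \epsilon_k(2k - 1 - h_k) - 1$, since $\epsilon_k = 1$. In both cases one obtains the single unified expression $w_k + \epsilon_k(2k - 1 - h_k) - 1$, and summing over $k = 2, \dots, n$ yields $\sor'(M, M_0) = \sum_{k=2}^n \left( w_k + \epsilon_k(2k - 1 - h_k) - 1 \right)$, as claimed. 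Alternatively, the same conclusion follows in one line from \eqref{sorthmb} by writing $\sor'(M, M_0) = \sor(M, M_0) - \sor_1(M, M_0) - |\{k \geq 2 : \epsilon_k = 1\}|$ and using that $h_1 = 1$ (hence $w_1 = 1$ and $\sor_1(M,M_0) = \epsilon_1$) to cancel the $k = 1$ term.

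Since the proof is pure bookkeeping, there is no genuine obstacle; the only points demanding care are (i) invoking $\col(o_k, M_k) = \epsilon_k$ so that the case split in \eqref{sord} lines up with the case split in \eqref{rtf}, and (ii) verifying that subtracting $1$ in the blue case exactly absorbs the discrepancy between the $2k - h_k$ appearing in \eqref{sorthmb} and the $2k - 1 - h_k$ required in the target formula. Both are immediate once the two quoted facts are in hand.
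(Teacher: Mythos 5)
Your proof is correct and follows exactly the paper's route: the paper's own proof is the one-line observation that the lemma ``follows from the property~\eqref{rtf} of the bijection $\phi_2$ and~\eqref{sord},'' and your term-by-term substitution (using $\col(o_k,M_k)=\epsilon_k$ to align the two case splits) is precisely the computation being invoked. The added remark about restricting to an even number of blue rises and the alternative derivation from~\eqref{sorthmb} are both sound but not needed beyond what the paper does.
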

 
 \begin{proof} It follows from the property~\eqref{rtf} of the bijection $\phi_2$ and~\eqref{sord}.
 \end{proof}
 
 Therefore, via the bijection $\phi_2$ we get the following multivariate generating function.

 \begin{corollary} Let $D$ be a Dyck path with height sequence $(h_1, \dots, h_n)$ and $M_0$ a matching of type $D$ with only red edges.
 \[ \sum_{M} q^{\sor'(M, M_0)}  \prod_{i \in \Cyc'_0(M, M_0)} t_i \prod_{i \in \Cyc'_1(M, M_0)} s_i=  \prod_{i=2}^n \left( t_i + ( q + q^{2i-h_i-1})[h_i-1]_q + q^{2_i-2} s_i) \right),\]
 where the sum is  over all bicolored matchings of type $D$ with an even number of blue edges.
 \end{corollary}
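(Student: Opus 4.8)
The plan is to re-run the argument used for the type $B$ corollaries, but with the bijection $\phi_2$ restricted to bicolored matchings of type $D$ with an even number of blue edges. The first step is to check that this restriction is compatible with the weighted-Dyck-path data. Since $h_1 = 1$, any admissible tuple has $w_1 = 1$, and in the construction of $M = \phi_2((w_1,\dots,w_n),(\epsilon_1,\dots,\epsilon_n))$ the unique candidate for the smallest opener $o_1$ is $M_0(o_1)$, independently of $\epsilon_1$. Hence $o_1$ is always joined to the same closer (or to the same endpoint $c^{*}$ of the same traced path), and altering $\epsilon_1$ flips only the colour of the single edge incident to $o_1$, leaving every other edge and every other colour unchanged. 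Therefore, for each choice of $(w_2,\epsilon_2),\dots,(w_n,\epsilon_n)$ with $1\le w_k\le h_k$ and $\epsilon_k\in\{0,1\}$, exactly one value of $\epsilon_1$ produces a matching with an even number of blue edges, so $\phi_2$ restricts to a bijection from the set of such tuples $((w_2,\dots,w_n),(\epsilon_2,\dots,\epsilon_n))$ onto the bicolored matchings of type $D$ with an even number of blue edges.

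Next I would transport the statistics through this bijection. By the preceding lemma, $\sor'(M,M_0)=\sum_{k=2}^n\bigl(w_k+\epsilon_k(2k-1-h_k)-1\bigr)$, and by the part of the theorem describing $\phi_2$ that identifies $\Cyc_0$ and $\Cyc_1$, we get $\Cyc'_0(M,M_0)=\{k\ge 2:(w_k,\epsilon_k)=(1,0)\}$ and $\Cyc'_1(M,M_0)=\{k\ge 2:(w_k,\epsilon_k)=(h_k,1)\}$. None of these quantities depends on $w_1$ or $\epsilon_1$, so the left-hand side of the claimed identity is a sum of monomials indexed by the free data $(w_k,\epsilon_k)_{k=2}^n$, and it factors as $\prod_{k=2}^n$ of the contribution of index $k$.

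Finally I would evaluate the $k$-th factor by summing over $\epsilon_k\in\{0,1\}$ and $w_k\in\{1,\dots,h_k\}$: the branch $\epsilon_k=0$ contributes $\sum_{w=1}^{h_k}q^{w-1}t_k^{\delta_{w,1}}=t_k+q[h_k-1]_q$, while the branch $\epsilon_k=1$ contributes $\sum_{w=1}^{h_k}q^{\,w+2k-2-h_k}\,s_k^{\delta_{w,h_k}}=q^{2k-1-h_k}[h_k-1]_q+q^{2k-2}s_k$; adding them gives $t_k+(q+q^{2k-1-h_k})[h_k-1]_q+q^{2k-2}s_k$, which is exactly the stated factor since $2k-1-h_k=2k-h_k-1$. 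Taking the product over $k=2,\dots,n$ completes the proof. I expect the one genuinely delicate point to be the first step: making precise that the parity condition pins down $\epsilon_1$ as a function of the remaining data, so that the restricted $\phi_2$ really is a bijection onto the type $D$ matchings; everything afterward is the same routine factorization already carried out in the type $B_n$ setting.
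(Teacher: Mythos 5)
Your proposal is correct and follows essentially the same route as the paper, which derives this corollary by restricting $\phi_2$ to tuples yielding an even number of blue edges (noting that the colour of the first rise is then determined by the others), invoking the preceding lemma for $\sor'$ and the cycle description from the theorem on $\phi_2$, and factoring the sum over the free data $(w_k,\epsilon_k)_{k\ge 2}$. You in fact spell out the one point the paper leaves implicit, namely why flipping $\epsilon_1$ changes only the colour of the edge at $o_1$ and hence the parity, so that the restricted map is a bijection.
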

 
 The results for the bicolored matchings with even number of blue edges yield results for restricted permutations of type $D_n$. Let $$D_n({\bf{r}}) =\{ \sigma \in D_n : |\sigma(k)| \leq r_k, 1\leq k \leq n \}.$$ For $\sigma \in D_n$, define $\Cyc'_0(\sigma) = \Cyc_0(\sigma) \backslash \{1\} $ and $\Cyc'_1(\sigma) = \Cyc_1(\sigma) \backslash \{1\} $. 
 \begin{corollary} Let ${\bf{r}}: 1 \leq r_1 \leq \cdots r_n \leq n$ be a nondecreasing integer sequence such that $r_k \geq k$. Let $(h_1, \dots, h_n)$ be the height sequence of the corresponding Dyck path $D({\bf{r}})$. 
 \begin{equation}
\sum_{\sigma \in D_n({\bf{r}})} q^{\inv_D(\sigma)} \prod_{i \in \mathrm{Prlminl}'(\sigma)} t_i = \prod_{i=2}^n \left( t_i + q [h_i-1]_q + q^{2i-h_i-1} [h_i]_q  \right)
\end{equation}
Moreover, suppose that $\sigma_0 \in D_n({\bf{r}})$ with $\sigma(k) > 0$ for all $k \geq 1$. Then
\begin{equation}
\sum_{\sigma \in D_n({\bf{r}})} q^{\sor_D(\sigma \sigma_0^{-1})} \prod_{i \in \Cyc'_0(\sigma \sigma_0^{-1})} t_i  \prod_{i \in \Cyc'_1(\sigma \sigma_0^{-1})} s_i  =   \prod_{i=2}^n \left( t_i + (q + q^{2i-h_i-1}) [h_i-1]_q +  q^{2i-2} s_i \right).  \end{equation}
\end{corollary}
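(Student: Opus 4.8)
The plan is to transport the statement across the colour-preserving bijection $g_{\bf r}$ and then quote the two generating-function corollaries that immediately precede it — equation~\eqref{almost} for $\mathrm{mix}'$, $\mathrm{Longr}'$, and the subsequent one for $\sor'(M,M_0)$, $\Cyc'_0$, $\Cyc'_1$ — in exact parallel with how the type $B_n$ corollaries were deduced from Theorem~\ref{thmsigned} and Corollary~\ref{coradd}. First I would note that $\sigma \in B_n$ lies in $D_n$ iff its number of negative entries is even, that (as in the proof of Lemma~\ref{hm}) this number is the number of blue edges of $g_{\bf r}(\sigma)$, and hence that $g_{\bf r}$ restricts to a bijection from $D_n({\bf r})$ onto the bicolored matchings of type $D({\bf r})$ with an even number of blue edges — which is precisely the index set of the two corollaries and the image of the restricted $\varphi_2$ and $\phi_2$. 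In that even-blue world the colour of the first rise is forced by the others, which is why the $i=1$ factor disappears and both product sides run over $i=2,\dots,n$.

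For the first identity I would check $\inv_D(\sigma) = \mathrm{mix}'(g_{\bf r}(\sigma))$: by \eqref{invb}, Lemma~\ref{hm} gives $\inv_B(\sigma) = \mathrm{mix}(g_{\bf r}(\sigma))$, and subtracting $N(\sigma)$ from the left and the equal quantity $\mathrm{b}(g_{\bf r}(\sigma))$ from the right yields $\inv_D(\sigma) = \nn + 2\cc_{*b} + 2\al_{*b} = \mathrm{mix}'$. I would also note that $\sigma(k) \in \mathrm{Prlminl}'(\sigma)$ iff the $|\sigma(k)|$-th opener carries a long red edge and $|\sigma(k)| \neq 1$, so $\mathrm{Prlminl}'(\sigma)$ corresponds under $g_{\bf r}$ to $\mathrm{Longr}'(g_{\bf r}(\sigma)) = \mathrm{Longr}(g_{\bf r}(\sigma)) \setminus \{1\}$, the value $1$ being dropped on both sides for the ``forced first rise'' reason. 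Summing $q^{\mathrm{mix}'(M)} \prod_{i \in \mathrm{Longr}'(M)} t_i$ over the relevant $M$ and invoking \eqref{almost} then proves the first identity.

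For the second identity I would fix $\sigma_0 \in D_n({\bf r})$ with all entries positive, so $M_0 := g_{\bf r}(\sigma_0)$ has only red edges, set $M := g_{\bf r}(\sigma)$, and argue as in Corollary~\ref{coradd}: a balanced (resp. unbalanced) cycle of $\sigma\sigma_0^{-1}$ corresponds to a cycle of $(M, M_0)$ with an even (resp. odd) number of blue edges, so that $\Cyc'_0(\sigma\sigma_0^{-1})$, $\Cyc'_1(\sigma\sigma_0^{-1})$ match $\Cyc'_0(M,M_0)$, $\Cyc'_1(M,M_0)$ (again the index $1$, which always lies in $\Cyc_0$, being deleted from both). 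It then remains to identify the type $D_n$ sorting index occurring on the left with the modified matching index $\sor'(M,M_0)$ — equivalently, to define the induced statistic $\sigma \mapsto \sor'(g_{\bf r}(\sigma), g_{\bf r}(\sigma_0))$, verify it is what the displayed exponent denotes, and check it collapses to $\sor_D(\sigma)$ when $\sigma_0 = \text{\bf{id}}$. Granting this, the preceding $\sor'$ corollary (whose right side should read $q^{2i-2} s_i$ in the last slot) gives the identity, and the $\sigma_0 = \text{\bf{id}}$ case refines \eqref{petdsorinv}.

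The hard part will be exactly that last identification, $\sor_D(\sigma) = \sor'(g_{\bf r}(\sigma), g_{\bf r}(\text{\bf{id}}))$ and its $\sigma_0$-relative version. One must run the ``type $D$'' Straight Selection Sort of $\sigma$ — which moves $n, n-1, \dots, 2$ (never $1$) into place and weights the transposition $(i_s\,j_s)$ by $j_s - i_s - 2\chi(i_s<0)$ — step for step against the matching sort $M = M_n, \dots, M_1 = M_0$ of \eqref{sord}, and match the two extra features of the $D_n$ index: the penalty $-2\chi(i_s<0)$ instead of $-\chi(i_s<0)$ is the ``$-1$ per blue step'' correction in \eqref{sord}, and the factorisation running over $j_s \geq 2$ rather than $j_s \geq 1$ is why the sum defining $\sor'$ starts at $k=2$. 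This bookkeeping rests on property~\eqref{rtf} of $\phi_2$ and on the fact that the sign of the coordinate of smallest absolute value is forced by the parity of the rest and so never needs sorting — the $D_n$ analogue of Lemma~\ref{permlemma}. Everything else (the cycle correspondence and the two statistic translations above) is a routine adaptation of the type $B_n$ arguments.
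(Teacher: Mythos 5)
Your proposal is correct and takes essentially the same route as the paper, whose entire proof of this corollary is the single observation $\inv_D(\sigma) = \inv_B(\sigma) - N(\sigma) = \mathrm{mix}(M) - \mathrm{b}(M) = \mathrm{mix}'(M)$ together with an implicit appeal to the two preceding matching corollaries via $g_{\bf r}$. The $\sor_D$-versus-$\sor'$ identification you flag as the hard part is likewise left unproven in the paper (it is simply asserted when $\sor'$ is introduced), so your more explicit sketch of that bookkeeping only adds detail the paper omits.
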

\begin{proof} Note that $\inv_D(\sigma) = \inv_B(\sigma) - N(\sigma) = \mathrm{mix}(M) - \mathrm{b}(M) = \mathrm{mix}'(M)$, where $M = g_{\bf{r}}(\sigma)$. \end{proof}

\begin{corollary} Let ${\bf{r}}: 1 \leq r_1 \leq \cdots r_n \leq n$ be a nondecreasing integer sequence such that $r_k \geq k$. Let $(h_1, \dots, h_n)$ be the height sequence of the corresponding Dyck path $D({\bf{r}})$. Also let $\sigma_0 \in D_n({\bf{r}})$ with $\sigma(k) > 0$ for all $k \geq 1$. Then
\begin{equation}
\sum_{\sigma \in D_n({\bf{r}})} q^{\sor_D(\sigma \sigma_0^{-1})} \prod_{i \in \Cyc'_0(\sigma \sigma_0^{-1})} t_i = \sum_{\sigma \in D_n({\bf{r}})} q^{\inv_D(\sigma)} \prod_{i \in \mathrm{Prlminl}'(\sigma)} t_i. \end{equation}

\end{corollary}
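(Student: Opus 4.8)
The plan is to read the identity off directly by specializing the auxiliary variables in the two corollaries that immediately precede it. First I would take the second displayed identity of the previous corollary,
\[ \sum_{\sigma \in D_n({\bf r})} q^{\sor_D(\sigma \sigma_0^{-1})} \prod_{i \in \Cyc'_0(\sigma \sigma_0^{-1})} t_i \prod_{i \in \Cyc'_1(\sigma \sigma_0^{-1})} s_i = \prod_{i=2}^n \left( t_i + (q + q^{2i-h_i-1})[h_i-1]_q + q^{2i-2} s_i \right), \]
and set $s_i = 1$ for every $i$. Then $\prod_{i \in \Cyc'_1(\sigma \sigma_0^{-1})} s_i = 1$, so the left-hand side collapses to $\sum_{\sigma \in D_n({\bf r})} q^{\sor_D(\sigma \sigma_0^{-1})} \prod_{i \in \Cyc'_0(\sigma \sigma_0^{-1})} t_i$, which is exactly the left-hand side of the statement to be proved.

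It then remains to identify the specialized product $\prod_{i=2}^n \left( t_i + (q + q^{2i-h_i-1})[h_i-1]_q + q^{2i-2}\right)$ with the product $\prod_{i=2}^n \left( t_i + q[h_i-1]_q + q^{2i-h_i-1}[h_i]_q\right)$, which by the first displayed identity of the previous corollary equals $\sum_{\sigma \in D_n({\bf r})} q^{\inv_D(\sigma)} \prod_{i \in \mathrm{Prlminl}'(\sigma)} t_i$, the desired right-hand side. I would compare the two products factor by factor: for each fixed $i$ it suffices to verify
\[ (q + q^{2i-h_i-1})[h_i-1]_q + q^{2i-2} = q[h_i-1]_q + q^{2i-h_i-1}[h_i]_q, \]
and after cancelling the common term $q[h_i-1]_q$ this is equivalent to $q^{2i-h_i-1}\left([h_i]_q - [h_i-1]_q\right) = q^{2i-2}$, which is immediate from $[h_i]_q - [h_i-1]_q = q^{h_i-1}$.

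Putting the two steps together, both sides of the claimed equation equal the common polynomial $\prod_{i=2}^n \left( t_i + q[h_i-1]_q + q^{2i-h_i-1}[h_i]_q\right)$. There is no real obstacle here: all the combinatorial content has already been packaged into the two preceding corollaries, which themselves rest on the restriction of $\varphi_2$ and $\phi_2$ to bicolored matchings with an even number of blue edges and on the identity $\inv_D(\sigma) = \mathrm{mix}'(g_{\bf r}(\sigma))$. The only things to watch are the bookkeeping of the exponents $2i-h_i-1$ and $2i-2$ in the factor-by-factor comparison, and the order of operations --- the specialization $s_i = 1$ must be carried out before the two products are matched. One could instead give a direct $g_{\bf r}$-transported bijection on $D_n({\bf r})$ carrying $(\sor_D(\cdot\,\sigma_0^{-1}), \Cyc'_0(\cdot\,\sigma_0^{-1}))$ to $(\inv_D, \mathrm{Prlminl}')$ by composing the relevant restrictions of $\phi_2$ and $\varphi_2$, but since both generating functions are already known in closed form the specialization argument is the shorter route.
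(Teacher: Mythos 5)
Your proposal is correct and is exactly the intended argument: the paper states this corollary without proof immediately after the corollary giving the two product formulas, so the proof is implicitly the specialization $s_i=1$ followed by the factor-by-factor identity $(q+q^{2i-h_i-1})[h_i-1]_q+q^{2i-2}=q[h_i-1]_q+q^{2i-h_i-1}[h_i]_q$, which you verify correctly via $[h_i]_q-[h_i-1]_q=q^{h_i-1}$.
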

In particular, when $r_1 =\cdots = r_n =n$ we have $ D_n({\bf{r}}) = D_n$, $h_k =k$, and 
\begin{equation}
\sum_{\sigma \in D_n} q^{\inv_D(\sigma)} \prod_{i \in \mathrm{Prlminl}'(\sigma)} t_i = \sum_{\sigma \in D_n} q^{\sor_D(\sigma)} \prod_{i \in \Cyc'_0(\sigma)} t_i   =   \prod_{i=2}^n \left( t_i + q [i-1]_q +  q^{i-1}[i]_q \right).  \end{equation}

\section*{Acknowledgement} The author would like to thank Michelle Wachs and Jim Haglund for useful conversations.

\end{document}